\documentclass[11pt,a4paper]{article}
\usepackage[latin1]{inputenc}
\usepackage{amsmath}
\usepackage{amsfonts}
\usepackage{amssymb}
\usepackage{graphicx}
\usepackage{geometry}
\usepackage{enumerate}
\usepackage{hyperref}
\usepackage{tikz-cd}
\hypersetup{
	colorlinks=true,        
	linkcolor=blue,         
	citecolor=blue,         
	urlcolor=blue           
}
\usepackage{dcolumn}
\usepackage{fancyvrb}
\usepackage{subcaption}
\usepackage{rotating}

\usepackage[ruled]{algorithm2e}

\usepackage{amsthm}
\usepackage{thmtools}

\newtheorem{theorem}{Theorem}[section]
\newtheorem{proposition}{Proposition}[section]
\newtheorem{corollary}{Corollary}[section]

\newtheorem{lemma}{Lemma}[section]
\newtheorem{problem}{Problem}[section]

\DeclareMathOperator{\Tr}{Tr}
\DeclareMathOperator{\Fix}{Fix}
\def\CsigmaMn{({\mathbb C}^{2^n\times 2^n})^{Q_M^n}}

\usepackage{todonotes}
\begin{document}
\thispagestyle{empty}
\title{A Douglas--Rachford construction of non-separable\\continuous
compactly supported \\multidimensional wavelets }
\author{David Franklin\footnotemark[1]
	   \and
	   Jeffrey A. Hogan\thanks{School of Mathematical and Physical Sciences,
	   	                   Mathematics Building, University of Newcastle,
	   	                   University Drive, Callaghan NSW 2308, \textsc{Australia}.	   	                   
	   	                  (\href{mailto:david.franklin@newcastle.edu.au }{david.franklin@newcastle.edu.au},
	   	                   \href{mailto:jeff.hogan@newcastle.edu.au}{jeff.hogan@newcastle.edu.au})}	                  
	   \and
	   Matthew K. Tam\thanks{School of Mathematics \& Statistics,
	                         The University of Melbourne,
	                         Parkville VIC 3010, \textsc{Australia}.	                    
	   	                    (\href{mailto:matthew.tam@unimelb.edu.au}{matthew.tam@unimelb.edu.au})}}

\maketitle
	\begin{abstract}
After re-casting the $n$-dimensional wavelet construction problem as a feasibility problem with constraints arising from the requirements of compact support, smoothness and orthogonality, the Douglas--Rachford algorithm is employed in the search for one- and two-dimensional wavelets. New one-dimensional wavelets are produced as well as genuinely non-separable two-dimensional wavelets in the case where the dilation on the plane is the standard $D_af(t)=a^{-1}f(t/a)$ $(t\in{\mathbb R}^n, a>0)$.
\end{abstract}

\smallskip\noindent
\small{\bf Keywords:\,}{\it wavelets; multiresolution analysis; optimisation; Douglas--Rachford algorithm; projection algorithm; feasibility problem.
}

\smallskip\noindent
\small{\bf AMS subject classifications}: 42C40, 42B99, 65T60, 47N10, 65K10, 65T60

\vskip0.2in
\centerline{\bf Dedication}
\vskip0.1in\noindent
{This paper is dedicated to the memory of Laureate Professor Jon Borwein, who first suggested this approach to the multidimensional wavelet construction problem. Jon was a friend and mentor to generations of mathematicians across the globe and has left an incomparable legacy of work spanning multiple disciplines. He was generous with his time and his ideas and was a highly respected and well-loved faculty member at the University of Newcastle in Australia.}



\setcounter{page}{1}

\section{Introduction}
\subsection{A brief history of wavelets}
Continuous wavelet decompositions have been used in analysis since the 1930's and in applied mathematics since the 1980's. They are implicit in the work of Calder\'on on singular integrals \cite{Cald} and explicit in  the work of Grossman and Morlet on seismic exploration \cite{GM}. They may be thought of as frame decompositions in which the index set associated with the frame is the upper half plane ${\mathbb R}^2_+=\{(x,a)\in{\mathbb R}^2:\, a>0\}$, and the frame elements are generated from a single {\it window} function $\psi$ by the action of dilations and translations. More precisely, given $f\in L^2({\mathbb R})$, we compute the frame coefficients $W_\psi f(x,a)$ by 
\begin{equation}
W_\psi f(x,a)=\langle f,\psi_{x,a}\rangle =\int_{-\infty}^\infty f(t)\dfrac{1}{\sqrt{a}}\overline{\psi}\left(\frac{x-t}a\right)\, dt .\label{CWT}
\end{equation}
The mapping $f\mapsto W_\psi f$ is known as the {\it continuous wavelet transform} (with respect to the wavelet $\psi$).
Given weak conditions on $\psi$, $f$ may be recovered from the frame coefficients $W_\psi f(x,a)$ $(x\in{\mathbb R},\ a>0)$(\cite{Daub2}). 

For applications, discretisations of the continuous transforms are desirable, so a theory of discrete wavelet frames (i.e., frames generated by the action of a discrete collection of dilations and translations of a single function $\psi$) was developed \cite{DS}, \cite{Hogan-Lakey} -- see also \cite{Gilbert et al} for connections with the theory of singular integrals. Unfortunately, these constructions failed to generalise to discrete data in such a way as to provide fast algorithms. On the other hand, Mallat \cite{Mall} and Meyer \cite{Mey1} independently developed the concept of {\it multiresolution analysis} (MRA) which enabled  fast algorithms. 

Realisations of MRA's require the construction of a {\it scaling function} $\varphi\in L^2({\mathbb R})$ with very special properties. As a minimum, it is necessary that $\varphi$ satisfies
   \begin{enumerate}
   \item[(i)] $\{\varphi (\cdot -k)\}_{k=-\infty}^\infty$ is an orthonormal collection in $L^2({\mathbb R})$.
   \item[(ii)] $\varphi$ is {\it self-similar} in the sense that there exists a sequence $\{h_k\}_{k=-\infty}^\infty\in\ell^2({\mathbb Z})$ such that 
   $$\dfrac{1}{2}\varphi\left(\dfrac{x}{2}\right)=\sum_{k=-\infty}^\infty h_k\varphi (x-k).$$
   \item[(iii)] $\int_{-\infty}^\infty\varphi (t)\, dt=1$.
   \end{enumerate}
Prototypical examples satisfying these conditions have long been known. The function $\varphi_H=\chi_{[0,1]}$, the characteristic function of $[0,1]$, is one such example and is associated with the {\it Haar} multiresolution analysis. Another example is $\varphi_S(t)=\sin (\pi t)/(\pi t)$ which is associated with the {\it Shannon} multiresolution analysis. Unfortunately, neither of these examples are satisfactory for use in signal analysis and processing for reasons we outline below.

When computing wavelet coefficients from (\ref{CWT}), it is much preferred that the wavelet $\psi$ be compactly supported, since this allows integration to be performed over a compact set. In fact, the shorter the support, the more efficiently this computation can be performed. Since the function $\varphi_S$ is not compactly supported (and, in fact, has very weak decay) it is therefore unsuitable.

The integral (\ref{CWT}) represents time localised information about the signal $f$ at scale $a$. With an application of the Parseval theorem for the Fourier transform, we have
\begin{equation}
W_\psi f(x,a)=\int_{-\infty}^\infty\hat f(\xi )e^{-2\pi ix\xi}\sqrt{a}\,\overline{\hat\psi}(-a\xi)\, d\xi \label{FT WT}
\end{equation}
(where $\hat f$ and $\hat\psi$ are the Fourier transforms of  $f$ and $\psi$ respectively). From (\ref{FT WT}) we see that the wavelet coefficients also give frequency localised information about $\hat f$ at the scale $a^{-1}$. For this reason it is desirable that $\hat\psi$ also be compactly supported. Of course $\psi$ and $\hat\psi$ cannot both be compactly supported, so we instead insist that $\hat\psi$ decay as fast as possible, or equivalently, that $\psi$ be as smooth as possible. Hence, for the purpose of efficient numerics, we shall add the following requirements to the three conditions above:
\begin{enumerate}
\item[(iv)] $\varphi$ is compactly supported.
\item[(v)] $\varphi$ is smooth.
\end{enumerate}
Note that the function $\varphi_H$ associated with the Haar multiresolution analysis fails condition (v), while the function $\varphi_S$ fails condition (iv). Without these properties, a multiresolution analysis fails to provide useful data and, in particular, without property (iv) a multiresolution analysis will not provide fast algorithms for discrete data.

Shortly after the publication of \cite{Mall} and \cite{Mey1}, Daubechies \cite{Daub1} used the MRA concept to construct a family of real-valued functions  ${}_N\varphi$ which satisfy conditions (i)--(v) and for which increasing the support (indexed by the positive integer $N$) gives improved smoothness. This led to constructions of scaling functions $\varphi$ with extra properties such as near-symmetry \cite{Daub2}.

Compactly supported wavelets with prescribed smoothness on ${\mathbb R}^n$ can be easily generated through tensor products of one-dimensional wavelets. However, such ``separable'' constructions suffer from the preferential treatment of the directions associated with the coordinate axes, and produce spurious artefacts in applications. Higher dimensional non-separable constructions have proved elusive when one uses the obvious generalisation of the dilations suggested by the one-dimensional approach. On a more fundamental level, the one-dimensional constructions cannot be easily transferred to higher dimensions as they involve techniques from complex analysis such as spectral factorisations which are not available in multivariate complex analysis. Indeed, Kova\v{c}evi\'c and Vetterli \cite{Kova-Vett} and Cohen and Daubechies \cite{Cohen-Daub} set out the theory of non-separable wavelets but did not explicitly construct any examples.
Ayache \cite{Ayache} and Belogay and Wang  \cite{Belogay} independently discovered methods of creating non-separable orthogonal wavelets in 1999. They were shortly followed by Lai and Roach \cite{Lai99}, He and Lai \cite{He00} and Karoui \cite{Karoui03,Karoui05}. San Antolin and Zalik \cite{SanAntolin13} discovered a family of non-separable scaling functions and their associated framelets by making a change of variables in specific trigonometric polynomials. All of these methods generate non-separable wavelets from one dimensional wavelets, typically by some kind of perturbation or modulation. For a more detailed discussion of the methods used, we refer the reader to Lai \cite{Lai02}. 

\subsection{This paper}
Here we employ techniques from optimisation to construct new MRA-based one-dimensional wavelets and new genuinely non-separable MRA-based multi-dimensional wavelets. We formulate the design problem in terms of constraints on a matrix-valued function well-known to wavelet theorists, discretise the problem, and then numerically compute -- through use of the {\it Douglas--Rachford algorithm} -- examples which simultaneously satisfy all of the constraints. This work is an extension of the PhD thesis of David Franklin \cite{Franklin}. A preliminary version of these results appears in \cite{FHT}.

This paper is organised as follows. In Section~\ref{sec: MRA} we review the basic axioms of a multiresolution analysis of $L^2({\mathbb R}^n)$ including details on how to encode properties of a scaling function $\varphi$ into an associated QMF $m_0$. These properties include the orthogonality of the integer shifts of $\varphi$, and the compact support and regularity of $\varphi$. In Section~\ref{sec: formulation}, we consider the relevant constraints on $m_0$ and the associated conjugate filters and express them  in terms of constraints on a matrix-valued function $U$ which has these filters as entries. We show that, in the case of compactly supported scaling functions and wavelets, sampling can be used to discretise the constraints. In Section~\ref{sec: opt prelim}, the relevant background material in optimisation and the Douglas--Rachford algorithm for solution of feasibility problems is introduced. This section provides a complete description of the relevant Hilbert spaces, constraints and projections for the wavelet construction problem. 
Finally, Section~\ref{sec: comp} includes computational results of the application of the Douglas--Rachford algorithm to the one-dimensional and two-dimensional wavelet construction problems.
 
 \subsection{Notation}
 We consider multi-indices $\alpha =(\alpha_1,\alpha_2,\dots ,\alpha_n)\in{\mathbb Z}_+^n$, (i.e., each $\alpha_i$ is a non-negative integer) and declare $|\alpha |=\sum_{j=1}^n\alpha_j$. The partial order on multi-indices is defined by $\beta\leq\alpha$ if and only if $\beta_j\leq\alpha_j$ for $1\leq j\leq n$. By $\partial^\alpha$ we mean the differential operator $$\partial^\alpha =\left(\dfrac{\partial}{\partial x_1}\right)^{\alpha_1}\cdots\left(\dfrac{\partial}{\partial x_n}\right)^{\alpha_n}.$$
 The collection of $N\times N$ matrices with complex coefficients is denoted ${\mathbb C}^{N\times N}$ and the sub-collection of unitary matrices by ${\mathcal U}(N)$. The Frobenius norm of an $N\times N$ matrix $A=(a_{ij})_{i,j=1}^N$ is given by $\|A\|_2=\left(\sum_{i,j=1}^N|a_{ij}|^2\right)^{1/2}$.
 
 Given positive integers $M$ and $n$, we define the set
 $$Q_M^n=\{0,1,\dots ,M-1\}^n=\{(j_1,j_2,\dots ,j_n)\in{\mathbb Z}^n;\, 0\leq j_i\leq M-1\text{ for }1\leq i\leq n\}.$$
By $({\mathbb C}^{N\times N})^{Q_M^n}$ we mean the collection of functions $F:Q_M^n\to{\mathbb C}^{N\times N}$.
 Elements of $({\mathbb C}^{N\times N})^{Q_M^n}$ are known as {\it matrix ensembles}.
 
 The dot product of $x$, $\xi\in{\mathbb R}^n$ is the real number $\langle x,\xi\rangle=\sum_{j=1}^nx_j\xi_j$ and we extend the dot product to $z$, $\zeta\in{\mathbb C}^n$ in the obvious way: $\langle z,\zeta\rangle =\sum_{j=1}^nz_j\zeta_j\in{\mathbb C}$. 
 
 The Fourier transform $\hat f$ of $f\in L^1({\mathbb R}^n)$ is normalised by $\hat f(\xi )=\int_{{\mathbb R}^n}f(x)e^{-2\pi i\langle x,\xi\rangle}\, dx$ and extends unitarily to $L^2({\mathbb R}^n)$.
 
A function $f:{\mathbb R}^n\to{\mathbb C}$ is said to be ${\mathbb Z}^n$-periodic if $f(\xi +\ell )=f(\xi )$ for all $\xi\in{\mathbb R}^n$ and $\ell\in{\mathbb Z}^n$.

The Lebesgue measure of a measurable subset $E\subset{\mathbb R}^n$ is denoted $|E|$.

\section{Multiresolution analysis, scaling functions and wavelets}\label{sec: MRA}

The construction of a compactly supported smooth orthogonal scaling function--wavelet pair $(\varphi ,\psi )$ on the line was first achieved by Daubechies in \cite{Daub1} with the help of the multiresolution structure introduced independently by Mallat \cite{Mall} and Meyer \cite{Mey1}. The problem reduces to the construction of a periodic matrix-valued function $U:{\mathbb R}\to{\mathbb C}^{2\times 2}$ satisfying certain restrictions designed to force $\varphi$ and $\psi$ to have desirable properties for signal processing. The $n$-dimensional wavelet construction problem may be reduced to the construction of a periodic matrix-valued function $U:{\mathbb R}^n\to{\mathbb C}^{2^n\times 2^n}$ satisfying similarly motivated restrictions. The construction relies on the notion of multiresolution analysis.  In this section, we give an explanation of the multiresolution structure and a discussion of the conditions we impose on the relevant filters to achieve these desirable properties.

\subsection{Multidimensional wavelets}

On $L^2({\mathbb R}^n)$ we have the unitary translation operators $\tau_x$ $(x\in{\mathbb R}^n)$ given by $\tau_xf(t)=f(t-x)$. Let $S$ be an  $n\times n$ matrix with integer entries, all of whose eigenvalues have absolute value greater than $1$, and define an associated dilation operator $D_S$ on $L^2({\mathbb R}^n)$ by $D_Sf(t)=(\det (S))^{-1/2}f(S^{-1}t)$. There are of course many possibilities for the matrix $S$ including (in two dimensions) the quincunx matrix $S=\left(\begin{matrix}1&1\\1&-1\end{matrix}\right)$. In this paper we consider only the matrices $S=2I_n$ (where $I_n$ is the $n\times n$ identity matrix) and in this case (with abusive notation) we write $D_2=D_{2I_n}$.

\subsection{Multiresolution analysis for $L^2({\mathbb R}^n)$}
A {\it multiresolution analysis} $(\{V_j\}_{j=\infty}^\infty ,\varphi )$ for $L^2({\mathbb R}^n)$ is a sequence of closed subspaces $\{V_j\}_{j=-\infty}^\infty\subset L^2({\mathbb R}^n)$ and a function $\varphi\in V_0$ such that 
\begin{enumerate}
\item[(i)] $V_j\subset V_{j+1}$ for all $j\in{\mathbb Z}$
\item[(ii)] $\cap_{j=-\infty}^\infty V_j=\{0\}$ and $\overline{\cup_{j=-\infty}^\infty V_j}=L^2({\mathbb R}^n)$
\item[(iii)] $f\in V_j\iff D_2^{-1}f\in V_{j+1}$
\item[(iv)] $f\in V_0\iff\tau_kf\in V_0$ $(k\in{\mathbb Z}^n)$
\item[(v)] $\{\tau_k\varphi\}_{k\in{\mathbb Z}^n}$ is an orthonormal basis for $V_0$.
\end{enumerate}

\subsubsection{Orthogonality}\label{sec: MRA orthog}
Orthonormality of the collection $\{\tau_k\varphi\}_{k\in{\mathbb Z}^n}$ is equivalent to the condition 
$$\sum_{k\in{\mathbb Z}^n} |\hat\varphi (\xi +k)|^2=1$$
for almost every $\xi$. Given such a collection, we note that $D_2\varphi\in V_{-1}\subset V_0$ and since $\{\tau_k\varphi\}_{k\in{\mathbb Z}^n}$ is an orthonormal basis for $V_0$, there exist constants $\{g_k^0\}_{k\in{\mathbb Z}^n}\in\ell^2({\mathbb Z}^n)$ such that 
\begin{equation}
\frac{1}{2^n}\varphi\left(\frac{x}{2}\right)=\sum_{k\in{\mathbb Z}^n} g_k^0\varphi (x-k).\label{dilation eqn HD}
\end{equation}
In fact, we have $g^0_k=2^{-n}\int_{{\mathbb R}^n}\varphi\left(\dfrac{x}{2}\right)\overline{\varphi (x-k)}\, dx$.  Taking the Fourier transform of both sides of (\ref{dilation eqn HD}) gives
\begin{equation}
\hat\varphi (2\xi )=m_0(\xi )\hat\varphi (\xi )\label{FT dilation eqn HD}
\end{equation}
where $m_0$ is the ${\mathbb Z}^n$-periodic Fourier series of $\{g^0_k\}$, i.e., $m_0(\xi )=\sum_{k\in{\mathbb Z}^n} g^0_ke^{-2\pi i\langle k,\xi\rangle}$ $(\xi\in{\mathbb R}^n)$. 

Let $V^n$ be the vertices of the unit cube $[0,1]^n$ in ${\mathbb R}^n$. Then $|V^n|=2^n$ and if $j\in\{0,1,\dots ,2^n-1\}$ has binary expansion $j=\sum_{k=0}^{n-1}a_k2^k$ $(a_k\in\{0,1\})$, we let $v_j=(a_0,a_1,\dots ,a_{n-1})\in V^n$. This provides a suitable enumeration of the elements of $V^n$, i.e., $V^n=\{v_j\}_{j=0}^{2^n-1}$. Note that $V^1=\{0,1\}\subset{\mathbb R}$ and 
$$V^2=\{v_0=(0,0),v_1=(1,0),v_2=(0,1),v_3=(1,1)\}\subset{\mathbb R}^2.$$
A necessary (but not sufficient) condition for the orthonormality of the collection $\{\tau_k\varphi\}_{k\in{\mathbb Z}^n}$ is the {\it quadrature mirror filter} (QMF) condition
\begin{equation}
\sum_{j=0}^{2^n-1}|m_0(\xi +v_j/2)|^2=1\label{QMF HD}
\end{equation}
for almost every $\xi$. 

Since $\det (2I_n)=2^n$, the index of the subgroup ${\mathbb Z}^n/2$ in ${\mathbb Z}^n$ is $2^n$. Attached to each of the $2^{n}-1$ non-trivial cosets $X_\varepsilon$ of ${\mathbb Z}^n/2$ in ${\mathbb Z}^n$ $(1\leq\varepsilon\leq 2^n-1)$ is a subspace $W_0^\varepsilon$ and a wavelet function $\psi^\varepsilon\in W_0^\varepsilon$ such that $V_1$ has the orthogonal decomposition 
\begin{equation}
V_1=V_0\oplus W_0^1\oplus W_0^2\oplus\cdots\oplus W_0^{2^n-1}.\label{subspace decomp}
\end{equation}
With $W_j^\varepsilon =D_{2^j}W_0^\varepsilon$ we  then have $L^2({\mathbb R}^n)=\oplus_{j=-\infty}^\infty (\oplus_{\varepsilon =1}^{2^n-1}W_j^\varepsilon )$ and the collection 
$$\{2^{j/2}\psi^\varepsilon(2^jx-k):\, j\in{\mathbb Z},\ k\in{\mathbb Z}^n,\ 1\leq\varepsilon\leq 2^n-1\}$$ 
forms an orthonormal basis for $L^2({\mathbb R}^n)$.

Since $D_2\psi^\varepsilon\in W^\varepsilon_{-1}\subset V_0$, there are constants $g_k^\varepsilon$ such that 
\begin{equation}
\frac{1}{2^n}\psi^\varepsilon\left(\frac{x}{2}\right)=\sum_{k\in{\mathbb Z}^n} g_k^\varepsilon\varphi (x-k)\quad (1\leq\varepsilon\leq 2^n-1).\label{wavelet dilation eqn HD}
\end{equation}
The Fourier transform of (\ref{wavelet dilation eqn HD}) may be written as
$\widehat{\psi^\varepsilon} (2\xi )=m_\varepsilon (\xi )\hat\varphi (\xi )$, 
where $m_\varepsilon$ is the ${\mathbb Z}^n$-periodic Fourier series of $\{g_k^\varepsilon\}$, i.e., $m_\varepsilon(\xi )=\sum_{k\in{\mathbb Z}^n} g_k^\varepsilon e^{-2\pi i\langle k,\xi\rangle}$ $(\xi\in{\mathbb R}^n)$. Given the orthonormality of $\{\tau_k\varphi\}_{k\in{\mathbb Z}^n}$, the orthonormality of $\{\tau_k\psi^\varepsilon\}_{k\in{\mathbb Z}^n}$ becomes equivalent to 
\begin{equation}
\sum_{j=0}^{2^n-1}|m_\varepsilon (\xi +v_j/2)|^2=1\label{QMF2 HD}
\end{equation}
for almost every $\xi$. Furthermore, the orthogonality of the decomposition (\ref{subspace decomp})  requires
\begin{equation}
\sum_{j=0}^{2^n-1}m_\varepsilon (\xi +v_j/2)\overline{m_\eta (\xi +v_j/2)}=\delta_{\varepsilon\eta}\label{cross QMF HD}
\end{equation}
for almost every $\xi$.

\subsubsection{Compact support}\label{cs HD}

The requirement $\overline{\cup_{j=-\infty}^\infty V_j}=L^2({\mathbb R}^n)$ of a multiresolution analysis forces $|\hat\varphi (0)|=|\int_{{\mathbb R}^n} \varphi (t)\, dt|=1$. It is convenient to choose the phase of $\varphi$ so that $\hat\varphi (0)=1$. Iterating equation (\ref{FT dilation eqn HD}) gives 
$$\hat\varphi (\xi )=m_0(\xi /2)m_0(\xi /4)\hat\varphi (\xi /4)=\cdots =\prod_{j=1}^Jm_0(\xi /2^j)\hat\varphi (\xi /2^J).$$
If $m_0$ satisfies the QMF condition (\ref{QMF HD}) and the infinite product $\prod_{j=1}^\infty m_0(\xi /2^j)$ converges pointwise almost everywhere, then its limit $\hat\varphi$ is square integrable and $\|\varphi\|_2=1$ \cite{Daub2}. 

It is relatively easy to see that if $\varphi$ is supported on $[0,M-1]^n\subset{\mathbb R}^n$, then the coefficients $h_k=h_{k_1k_2\dots k_n}$ in the dilation equation (\ref{dilation eqn HD}) are zero unless $0\leq k_i\leq M-1$ $(1\leq i\leq n)$. The converse is trickier, and requires a higher-dimensional version of the Paley--Wiener theorem (see Theorem \ref{PW HD} below).

A function  $F:D\subset{\mathbb C}^n\to{\mathbb C}$ is {\it holomorphic} on $D$  if for each $z^0=(z_1^0,z_2^0,\dots ,z_n^0)\in D$, there is a polydisc
$$P=\{(z_1,z_2,\dots ,z_n)\in{\mathbb C}^n:\, |z_1-z_1^0|<r_1,\ \dots ,|z_n-z_n^0|<r_n\}\subset D$$
$(r_1,\dots ,r_n>0)$ in which $F$ may be represented by the absolutely  convergent series
$$F(z)=F(z_1,z_2,\dots ,z_n)=\sum_{k_1,k_2,\dots ,k_n\geq 0} a_{k_1k_2\dots k_n}(z_1-z_1^0)^{k_1}(z_2-z_2^0)^{k_2}\cdots (z_n-z_n^0)^{k_n}.$$
We say $F$ is {\it entire} if it is holomorphic on $D={\mathbb C}^n$. An entire function $F:{\mathbb C}^n\to{\mathbb C}$ is of {\it exponential type} $R>0$ if for each $\varepsilon >0$ there is a constant $A_\varepsilon >0$ such that 
$$|F(z)|\leq A_\varepsilon e^{2\pi (R+\varepsilon )\|z\|_1}$$
where if $z=(z_1,z_2,\dots ,z_n)$, $\|z\|_1=\sum_{j=1}^n|z_j|$. The class of all functions of exponential type $R>0$ on ${\mathbb C}^n$ is denoted ${\mathcal E}^n(R)$.

Suppose $F$ is the inverse Fourier transform of a function $\sigma\in L^2({\mathbb R}^n)$ which vanishes outside 
$$[-R,R]^n=\bigg\{\xi\in{\mathbb R}^n:\, \|\xi\|_\infty=\max_{1\leq j\leq n}|\xi_j|\leq R\bigg\},$$
i.e., $F(z)=\int_{[-R,R]^n}\sigma (\xi )e^{2\pi i\langle z,\xi\rangle}\, d\xi$ where, if $z=x+iy\in {\mathbb C}^n$ $(x,y\in{\mathbb R}^n)$ and $\xi\in{\mathbb R}^n$, we have $\langle z,\xi\rangle=\sum_{j=1}^nz_i\xi_j=\langle x,\xi\rangle +i\langle y,\xi\rangle$. Then $F$ satisfies the pointwise bound
$$|F(z)|\leq\int_{[-R,R]^n}|\sigma (\xi )|e^{-2\pi\langle y,\xi\rangle}\, d\xi .$$
However, if $\xi\in [-R,R]^n$, then $|\xi_j|\leq R$ for each $1\leq j\leq n$ so that $e^{-2\pi y_j\xi_j}\leq e^{2\pi R|y|}$ and as a consequence
$$|F(z)|\leq e^{2\pi R\|y\|_1}\int_{[-R,R]^n}|\sigma (\xi )|\, d\xi\leq 2^{n/2}R^{n/2}\|\sigma\|_2e^{2\pi R\|z\|_1}.$$
Hence, $F\in{\mathcal E}^n(R)$.

The following multidimensional generalisation of the Paley--Wiener theorem is a special case of a result given by Stein and Weiss \cite{SW} for more general support sets.

\begin{theorem}[Paley--Wiener theorem for cubes]\label{PW HD} Suppose $F\in L^2({\mathbb R}^n)$. Then $F$ is the inverse Fourier transform of a function vanishing outside the cube $[-R,R]^n$ if and only if $F$ is the restriction to ${\mathbb R}^n$ of a function in ${\mathcal E}^n(R)$.
\end{theorem}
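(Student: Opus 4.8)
The plan is to prove the two implications separately, treating the ``if'' (sufficiency) direction as essentially already dispatched by the computation immediately preceding the statement, and concentrating the real work on the converse. For sufficiency, suppose $F$ is the inverse Fourier transform of some $\sigma\in L^2({\mathbb R}^n)$ vanishing outside $[-R,R]^n$. Since $\sigma$ is compactly supported and square integrable it lies in $L^1([-R,R]^n)$, so the formula $F(z)=\int_{[-R,R]^n}\sigma(\xi)e^{2\pi i\langle z,\xi\rangle}\,d\xi$ converges absolutely for every $z\in{\mathbb C}^n$ and furnishes a continuous extension of the almost-everywhere-defined inverse transform. I would then check this extension is entire (holomorphic in each variable by differentiating under the integral sign, or by Morera's theorem applied one slot at a time, hence jointly holomorphic) and simply invoke the displayed estimate established just above the theorem, namely $|F(z)|\leq 2^{n/2}R^{n/2}\|\sigma\|_2\,e^{2\pi R\|z\|_1}$, which places $F$ in ${\mathcal E}^n(R)$.

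The substance is the converse. Suppose $F\in L^2({\mathbb R}^n)$ is the restriction of some entire $\tilde F\in{\mathcal E}^n(R)$; the goal is to show $\hat F$ vanishes almost everywhere off $[-R,R]^n$, equivalently off each slab $\{|\xi_j|\leq R\}$. The idea is to reduce to one dimension by slicing. Fixing real values $x_2,\dots,x_n$, the map $z_1\mapsto\tilde F(z_1,x_2,\dots,x_n)$ is entire, and reading off the type bound $|\tilde F(z)|\leq A_\varepsilon e^{2\pi(R+\varepsilon)\|z\|_1}$ with the remaining coordinates held real and fixed shows it is of one-dimensional exponential type $R$. By Fubini's theorem, for almost every $(x_2,\dots,x_n)$ the slice $x_1\mapsto F(x_1,x_2,\dots,x_n)$ additionally belongs to $L^2({\mathbb R})$. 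The classical one-dimensional Paley--Wiener theorem then yields that the one-dimensional Fourier transform of this slice is supported in $[-R,R]$.

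To assemble the global conclusion I would identify the Plancherel Fourier transform on ${\mathbb R}^n$ as the composition $\hat F=\mathcal{F}_n\cdots\mathcal{F}_1 F$ of the partial one-variable Fourier transforms $\mathcal{F}_j$, each a unitary operator on $L^2({\mathbb R}^n)$ acting only in the $j$-th coordinate, and identify $\mathcal{F}_1 F$ with the slice-wise one-dimensional transform of the previous step, so that $\mathcal{F}_1 F$ is supported in $\{|\xi_1|\leq R\}$. Because the remaining operators $\mathcal{F}_2,\dots,\mathcal{F}_n$ act on the other coordinates, they commute with multiplication by $\mathbf{1}_{\{|\xi_1|>R\}}$ and hence preserve this support, giving $\operatorname{supp}\hat F\subseteq\{|\xi_1|\leq R\}$; by the symmetry of the hypothesis in the coordinates the same holds for every $j$, and intersecting the slabs yields $\operatorname{supp}\hat F\subseteq[-R,R]^n$. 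The main obstacle is the rigorous interfacing between the $L^2$ Fourier transform, which is not given by a convergent integral when $F\notin L^1$, and the slice-wise one-dimensional transforms: one must justify, via a Fubini-type statement for partial Fourier transforms, that $\mathcal{F}_1 F$ agrees almost everywhere with the one-dimensional transform of the slices, and that support in a slab survives the action of the orthogonal remaining transforms. The one-dimensional Paley--Wiener theorem itself I would invoke as a black box, its proof resting on a contour-shift argument that exploits the exponential-type bound.
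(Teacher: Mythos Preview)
The paper does not supply its own proof of this theorem: it is stated as a special case of a result in Stein and Weiss \cite{SW} and is simply cited without argument. So there is nothing to compare your attempt against in the text itself.

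That said, your outline is a correct and standard way to establish the cube version. The forward direction is indeed already done by the estimate displayed just above the theorem. For the converse, your reduction to the one-dimensional Paley--Wiener theorem via slicing is sound: fixing the remaining coordinates real, the exponential-type bound specialises to a one-variable type-$R$ bound (with a constant depending on the frozen coordinates, which is harmless), Fubini supplies the $L^2$ condition on almost every slice, and the one-dimensional theorem then confines the partial transform $\mathcal{F}_1F$ to the slab $\{|\xi_1|\leq R\}$. Your observation that the remaining partial transforms $\mathcal{F}_2,\dots,\mathcal{F}_n$ commute with multiplication by functions of $\xi_1$ alone, and hence preserve that slab, is the right mechanism for propagating the support constraint to $\hat F$. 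Repeating for each coordinate and intersecting gives the cube. You have also correctly flagged the one genuine technicality, namely identifying the abstract $L^2$ partial transform with the slice-wise one-dimensional transform; this is handled by Fubini together with the density of $L^1\cap L^2$ and presents no real difficulty.

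The Stein--Weiss argument is formulated for general symmetric convex bodies rather than cubes and proceeds differently, essentially by a direct contour-shift (Cauchy's theorem) argument exploiting the exponential-type bound, without invoking the one-dimensional theorem as a black box. Your slicing approach is more elementary and is well suited to the cube precisely because the cube is a product of intervals; it would not work verbatim for, say, a ball.
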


Given a positive integer $N$, we say $\Gamma :{\mathbb C}^n\to{\mathbb C}$ is a trigonometric polynomial of degree $N$ if $\Gamma (\zeta )=\sum_{k_1,k_2,\dots ,k_n=0}^Na_{k_1k_2\dots k_n}e^{-2\pi i\langle k,\zeta\rangle}$ $(\zeta\in{\mathbb C}^n)$ for some $\{a_{k_1k_2\dots k_n}\}_{k_1,k_2,\dots ,k_n=0}^N\subset{\mathbb C}$.

The following result is a multi-dimensional version of Lemma 6.2.2 of \cite{Daub2}.

\begin{proposition} Suppose $\Gamma$ is a trigonometric polynomial of degree $N$ on ${\mathbb C}^n$ and $\Gamma (0)=1$. Let 
$$F(\zeta )=\prod_{j=1}^\infty\Gamma (\zeta /2^j)\qquad (\zeta\in{\mathbb C}^n).$$ 
Then $F$ is the Fourier transform of a function $f\in L^2({\mathbb R}^n)$ supported on the cube $[0,N]^n$.
\end{proposition}

\begin{proof} We prove the result in the case $n=2$ only.  Let $\Gamma (\zeta )=\sum_{k_1,k_2=0}^Na_{k_1k_2}e^{-2\pi i\langle k,\zeta\rangle}$ be as in the statement of the proposition. Note that 
$|e^{-2\pi i\langle k,\zeta\rangle}-1|\leq2\pi\|k\|_\infty\|\zeta\|_1$
so that 
\begin{align}
|\Gamma (\zeta )|\leq 1+|\Gamma (\zeta )-1|&\leq 1+\bigg|\sum_{k_1,k_2=0}^Na_{k_1k_2}(e^{-2\pi i\langle k,\zeta\rangle}-1)\bigg|\notag\\
&\leq 1+\sum_{k_1,k_2=0}^N|a_{k_1,k_2}||e^{-2\pi i\langle k,\zeta\rangle}-1|\notag\\
&\leq 1+\sum_{k_1,k_2=0}^N2\pi\|k\|_\infty \|\zeta\|_1\leq 1+C\|\zeta\|_1\leq e^{C\|\zeta\|_1}\label{exp est}
\end{align}
where $C=2\pi N\sum_{k_1,k_2=0}^N|a_{k_1,k_2}|$.
However, if $\|\zeta\|_1\leq 1$, from (\ref{exp est}) we have
$\bigg|\prod_{j=1}^\infty\Gamma (\zeta /2^j)\bigg|\leq\prod_{j=1}^\infty e^{C\|\zeta\|_1/2^j}=e^{C\|\zeta\|_1}\leq e^C$,
while if $\|\zeta\|_1>1$, 
\begin{align*}
|e^{-2\pi i\langle k,\zeta\rangle}-1|&=|e^{-2\pi i\langle k,x\rangle}e^{2\pi\langle k,y\rangle}-e^{2\pi\langle k,y\rangle}+e^{2\pi\langle k,y\rangle}-1|\\
&\leq e^{2\pi\langle k,y\rangle}|e^{-2\pi i\langle k,x\rangle}-1|+|e^{2\pi\langle k,y\rangle}-1|\leq 3
\end{align*}
provided $y_1,y_2\leq 0$. Here $\zeta =x+iy$ with $x=(x_1,x_2)$, $y=(y_1,y_2)\in{\mathbb R}^2$. Therefore, if $\|\zeta\|_1>1$ and $y_1,y_2\leq 0$, we have
\begin{align*}
|\Gamma (\zeta )|&\leq 1+|\Gamma (\zeta )-1|\\
&\leq 1+\sum_{k_1,k_2=0}^N|a_{k_1,k_2}||e^{-2\pi i\langle k,\zeta\rangle}-1|\leq 1+3\sum_{k_1,k_2=0}^N|a_{k_1,k_2}|=C.
\end{align*}
We choose an integer $j_0\geq 0$ such that $2^{j_0}\leq\|\zeta\|_1<2^{j_0+1}$. Then 
$$\bigg|\prod_{j=1}^\infty\Gamma(\zeta /2^j)\bigg|\leq\prod_{j=1}^{j_0} C\prod_{j=j_0+1}^\infty e^{C\|\zeta\|_1/2^j}=C^{j_0}\exp (C\|\zeta\|_1/2^{j_0})\leq C\|\zeta\|_1^\alpha$$
with $\alpha =\dfrac{\ln C}{\ln 2}$. We conclude that if $y_1,y_2\leq 0$,
\begin{equation}
\bigg|\prod_{j=1}^\infty\Gamma (\zeta /2^j)\bigg|\leq C\max\{1,\|\zeta\|_1^\alpha\}.\label{prod est 1}
\end{equation}
Suppose now that $y_1>0$, $y_2\leq 0$. Then 
$\Gamma (\zeta )=e^{-2\pi iN\zeta_1}\tilde\Gamma (\zeta)$ with 
$$\tilde\Gamma (\zeta )=\sum_{k_1,k_2=0}^Nb_{k_1,k_2}e^{-2\pi ik_1(-\zeta_1)}e^{-2\pi ik_2\zeta_2}$$
and $b_{k_1,k_2}=a_{N-k_1,k_2}$ so that
\begin{equation}\bigg|\prod_{j=1}^\infty\Gamma (\zeta /2^j)\bigg|=|e^{-2\pi iN\zeta_1}|\bigg|\prod_{j=1}^\infty |\tilde\Gamma (\zeta /2^j)\bigg|\leq Ce^{2\pi Ny_1}\max\{1,\|\zeta\|_1^\alpha\}.\label{prod est 2}
\end{equation}
Similarly, if $y_1\leq 0$, $y_2\geq 0$,
\begin{equation}
\bigg|\prod_{j=1}^\infty\Gamma (\zeta /2^j)\bigg|\leq Ce^{2\pi Ny_2}\max\{1,\|\zeta\|_1^\alpha\}\label{prod est 3}
\end{equation}
and if $y_1,y_2\geq 0$,
\begin{equation}
\bigg|\prod_{j=1}^\infty\Gamma (\zeta /2^j)\bigg|\leq Ce^{2\pi N(y_1+y_2)}\max\{1,\|\zeta\|_1^\alpha\}.\label{prod est 4}
\end{equation}
Let $P(\zeta )=e^{\pi iN(\zeta_1+\zeta_2)}\Gamma (\zeta )$.  Since $|e^{\pi iN(\zeta_1+\zeta_2)}|=e^{-\pi N(y_1+y_2)}$ we have
\begin{equation}
\bigg|\prod_{j=1}^\infty P(\zeta /2^j)\bigg|=e^{-\pi N(y_1+y_2)}\prod_{j=1}^\infty |\Gamma (\zeta /2^j)|.\label{P and Gamma}
\end{equation}
Applying (\ref{P and Gamma}) to (\ref{prod est 1})--(\ref{prod est 4}) gives
\begin{equation*}
\bigg|\prod_{j=1}^\infty P(\zeta /2^j)\bigg|\leq Ce^{\pi N\|y\|_1}\max\{1,\|\zeta\|_1^\alpha\}\leq C_\varepsilon e^{\pi (N+\varepsilon )\|\zeta\|_1}
\end{equation*}
for all $\zeta\in{\mathbb C}^n$. By Theorem \ref{PW HD}, the product $\prod_{j=1}^\infty P(\zeta /2^j)$ is the Fourier transform of a function $\sigma\in L^2({\mathbb R}^n)$ supported on the cube $[-N/2,N/2]^n$. But
$$F(\zeta )=\prod_{j=1}^\infty\Gamma (\zeta /2^j)=e^{-\pi iN(\zeta_1+\zeta_2)}\prod_{j=1}^\infty P(\zeta /2^j)=e^{-\pi iN(\zeta _1+\zeta_2)}\hat\sigma (\zeta ).$$
If $f(x)=\sigma (x-(N/2,N/2))$, then $f$ is supported on $[0,N]^2$ and $\hat f(\zeta )=e^{-\pi iN(\zeta_1+\zeta_2)}\hat\sigma (\zeta )=F(\zeta )$.
\end{proof}

\subsubsection{Completeness}\label{sec: completeness}
Since $m_0$ satisfies (\ref{QMF HD}), we have $|m_0(\xi )|\leq 1$ for almost every $\xi$. Since we require $\hat\varphi (0)=1$, equation (\ref{FT dilation eqn HD}) requires $m_0(1)=1$. Because of the MRA condition (\ref{QMF HD}), we also have $m_0(v_j/2)=0$ for $1\leq j\leq 2^n-1$. The cross QMF condition (\ref{cross QMF HD}) now gives $m_\varepsilon (0)=0$ for $1\leq\varepsilon\leq 2^n-1$. Summarising, we have
\begin{equation}
m_0 (v_j/2)=\delta_{j0},\quad m_\varepsilon (0)=0\quad (0\leq j\leq 2^n-1,\ 1\leq\varepsilon\leq 2^n-1).\label{completeness condits}
\end{equation}

\subsubsection{Regularity}\label{ssec: regularity}

The following result is a consequence of \cite[Chapter 3.7, Proposition 4]{Meyer89}.

\begin{theorem}\label{thm: regularity} 
Suppose  $\varphi$ is a compactly supported scaling function and  $\{\psi^\varepsilon\}_{\varepsilon =1}^{2^n-1}$ is a collection of wavelets associated with an MRA of $L^2({\mathbb R}^n)$, all of which have bounded partial derivatives of order less than or equal to $d$.  Then 
\begin{enumerate}
\item[(i)] $\int_{{\mathbb R}^n} x^\alpha\psi^\varepsilon (x)\, dx=0$ for $|\alpha |\leq d$ and $1\leq\varepsilon\leq 2^n-1$.
\item
[(ii)] The conjugate filters $\{m_\varepsilon\}_{\varepsilon =1}^{2^n-1}$ satisfy
\begin{equation}
\partial^\alpha m_\varepsilon (\xi )\bigg|_{\xi =0}=0\quad\text{ for $1\leq\varepsilon\leq 2^n-1$, $|\alpha |\leq d$.}\label{flat filters}
\end{equation}
\end{enumerate}
\end{theorem}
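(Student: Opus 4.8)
The plan is to treat (i) as the analytic input supplied by the cited proposition and to derive (ii) from it by passing to the frequency side. First I would record that each $\psi^\varepsilon$ is compactly supported and continuous (indeed it has bounded derivatives up to order $d\geq 0$), so the moments $\int_{{\mathbb R}^n}x^\alpha\psi^\varepsilon(x)\,dx$ converge absolutely for every $\alpha$, and the vanishing-moment statement (i) is exactly what \cite[Chapter~3.7, Proposition~4]{Meyer89} delivers once the regularity hypothesis is in force: regularity of an MRA wavelet forces its first $d+1$ moments to vanish.

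Next I would transfer (i) to the Fourier domain. Because $\psi^\varepsilon$ has compact support, $\widehat{\psi^\varepsilon}$ is entire, and differentiating under the integral gives
$$\partial^\alpha\widehat{\psi^\varepsilon}(\xi)\Big|_{\xi=0}=(-2\pi i)^{|\alpha|}\int_{{\mathbb R}^n}x^\alpha\psi^\varepsilon(x)\,dx,$$
so (i) is equivalent to $\partial^\alpha\widehat{\psi^\varepsilon}(0)=0$ for all $|\alpha|\le d$. I would then invoke the factorisation $\widehat{\psi^\varepsilon}(2\xi)=m_\varepsilon(\xi)\hat\varphi(\xi)$ recorded after (\ref{wavelet dilation eqn HD}). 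Since $\varphi$ and $\psi^\varepsilon$ are compactly supported, only finitely many coefficients $g_k^\varepsilon$ are nonzero, so $m_\varepsilon$ is a trigonometric polynomial and hence entire; in particular all derivatives $\partial^\alpha m_\varepsilon(0)$ exist.

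The core step is a Leibniz-rule induction on $|\alpha|$. Applying $\partial^\alpha$ to both sides of the factorisation, using the chain rule $\partial^\alpha[\widehat{\psi^\varepsilon}(2\xi)]=2^{|\alpha|}(\partial^\alpha\widehat{\psi^\varepsilon})(2\xi)$ and evaluating at $\xi=0$ with $\hat\varphi(0)=1$, yields
$$2^{|\alpha|}\,\partial^\alpha\widehat{\psi^\varepsilon}(0)=\partial^\alpha m_\varepsilon(0)+\sum_{\beta<\alpha}\binom{\alpha}{\beta}\,\partial^\beta m_\varepsilon(0)\,\partial^{\alpha-\beta}\hat\varphi(0).$$
The left-hand side is zero by the previous paragraph. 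For $|\alpha|=0$ this recovers $m_\varepsilon(0)=0$, matching (\ref{completeness condits}); assuming inductively that $\partial^\beta m_\varepsilon(0)=0$ for every $\beta<\alpha$, the sum collapses and we are left with $\partial^\alpha m_\varepsilon(0)=0$. Running the induction up to $|\alpha|=d$ gives (ii).

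I expect the only genuine obstacle to be (i) itself, namely the implication \emph{regularity of the wavelets $\Rightarrow$ vanishing moments}, which is the substance of the cited Meyer proposition and rests on the ability of $V_0$ to reproduce polynomials of degree at most $d$; the passage to (ii) is then purely the bookkeeping induction above. Equivalently, one could cite Meyer in its filter-flatness form and run the same induction in reverse to recover the moment conditions (i), so the two assertions are really the Fourier-transform shadows of one another.
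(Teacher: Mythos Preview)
Your argument is correct. The paper does not actually supply a proof of this theorem; it simply records the statement as a consequence of \cite[Chapter~3.7, Proposition~4]{Meyer89} and moves on. What you have written is therefore not a comparison against an existing proof but a legitimate fleshing-out of that citation: you accept (i) as the content of Meyer's proposition and then derive (ii) by the standard Leibniz--induction argument applied to the factorisation $\widehat{\psi^\varepsilon}(2\xi)=m_\varepsilon(\xi)\hat\varphi(\xi)$ with $\hat\varphi(0)=1$. The induction over multi-indices $\beta<\alpha$ is sound (one may organise it as strong induction on $|\alpha|$), and the observation that compact support of $\varphi$ and $\psi^\varepsilon$ forces $m_\varepsilon$ to be a trigonometric polynomial is exactly the justification needed for the derivatives $\partial^\alpha m_\varepsilon(0)$ to make sense. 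In short, the paper defers entirely to Meyer, and your proposal supplies precisely the routine Fourier-side computation that bridges Meyer's vanishing-moment conclusion to the filter-flatness statement~(ii).
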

Condition (ii) is not sufficient to ensure regularity of the  wavelets $\{\psi^\varepsilon\}_{\varepsilon =1}^{2^n-1}$. Nevertheless, this is the condition we impose in an attempt to enforce regularity, with the expectation that the larger the value of $d$ (i.e., the ``flatter'' the filters  $m_\varepsilon$ $(1\leq\varepsilon\leq 2^n-1)$ at the origin) the higher the regularity.

Flatness of the coniugate filters at the origin as in (\ref{flat filters}) coupled with the cross QMF condition (\ref{cross QMF HD}) gives the flatness of the quadrature filter $m_0$ at the points $\{v_j/2\}_{j=1}^{2^n-1}$ as the next result shows.

\begin{proposition} \label{cor: flat QMF} Suppose $\{m_\varepsilon\}_{\varepsilon =0}^{2^n-1}$ are trigonometric polynomials satisfying (\ref{cross QMF HD}) and (\ref{flat filters}) and $m_0(0)=1$. Then $m_0$ satisfies
\begin{equation}
\partial^\alpha m_0(\xi )\bigg|_{\xi =v_j/2}=0\quad\text{ for $1\leq j\leq 2^n-1$, $|\alpha |\leq d$.}\label{m_0 flat}
\end{equation}
\end{proposition}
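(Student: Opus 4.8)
The plan is to convert the cross QMF condition (\ref{cross QMF HD}) into a ``dual'' orthogonality relation via a unitarity argument, and then to transfer the flatness (\ref{flat filters}) of the conjugate filters onto $m_0$ by differentiating that relation with the Leibniz rule and inducting on $|\alpha|$.

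First I would fix $\xi$ and form the $2^n\times 2^n$ matrix $U(\xi)$ with entries $U(\xi)_{j\varepsilon}=m_\varepsilon(\xi+v_j/2)$, where $j$ indexes rows and $\varepsilon$ columns. In these terms, (\ref{cross QMF HD}) says exactly that $U(\xi)^*U(\xi)=I$, i.e.\ $U(\xi)$ is an isometry; being a square matrix over ${\mathbb C}$ it is therefore unitary, so $U(\xi)U(\xi)^*=I$ as well. Writing out the $(j,0)$ entry of this last identity (recall $v_0=0$), and using that the $m_\varepsilon$ are trigonometric polynomials, hence real-analytic, so that the almost-everywhere identity in fact holds for every real $\xi$, I obtain
\begin{equation}
\sum_{\varepsilon=0}^{2^n-1}m_\varepsilon(\xi+v_j/2)\,\overline{m_\varepsilon(\xi)}=0\qquad(1\le j\le 2^n-1),\label{dualrel}
\end{equation}
valid for all $\xi\in{\mathbb R}^n$. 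This transposition of the shift structure is precisely what lets one probe $m_0$ at the single point $v_j/2$, rather than averaged over shifts as in (\ref{cross QMF HD}).

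Next I would apply $\partial^\alpha$ (with $|\alpha|\le d$) to (\ref{dualrel}) and evaluate at $\xi=0$, using the Leibniz rule together with the fact that $\partial^\gamma\overline{m_\varepsilon}=\overline{\partial^\gamma m_\varepsilon}$ for differentiation in the real variables. Each resulting term carries a factor $\overline{\partial^{\alpha-\beta}m_\varepsilon(0)}$ with $|\alpha-\beta|\le d$, so for $\varepsilon\ge 1$ every such factor vanishes by (\ref{flat filters}); only the $\varepsilon=0$ summand survives, leaving
\begin{equation}
0=\sum_{\beta\le\alpha}\binom{\alpha}{\beta}\,\partial^\beta m_0(v_j/2)\,\overline{\partial^{\alpha-\beta}m_0(0)}.\label{leibnizrel}
\end{equation}
I would then induct on $|\alpha|$. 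For $|\alpha|=0$, (\ref{leibnizrel}) reads $m_0(v_j/2)\overline{m_0(0)}=0$, so $m_0(v_j/2)=0$ since $m_0(0)=1$; this settles the base case (no appeal to (\ref{completeness condits}) is needed). For general $\alpha$ the term $\beta=\alpha$ contributes $\partial^\alpha m_0(v_j/2)\,\overline{m_0(0)}=\partial^\alpha m_0(v_j/2)$, whereas every term with $\beta\ne\alpha$ has $|\beta|<|\alpha|$ and hence $\partial^\beta m_0(v_j/2)=0$ by the inductive hypothesis. Thus (\ref{leibnizrel}) collapses to $\partial^\alpha m_0(v_j/2)=0$, which is exactly (\ref{m_0 flat}).

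The genuinely substantive step is the first one: recognising that (\ref{cross QMF HD}) asserts that a single square matrix is an isometry, so that the dual relation (\ref{dualrel}) comes for free. Once (\ref{dualrel}) is in hand the remainder is bookkeeping, and the only points requiring care are the commutation of $\partial^\alpha$ with conjugation, the passage from an almost-everywhere identity to a pointwise one (licensed by the real-analyticity of trigonometric polynomials), and tracking which multi-indices $\beta$ in the Leibniz expansion fall under the inductive hypothesis. I expect no obstacle beyond this.
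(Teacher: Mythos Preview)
Your proof is correct. The paper in fact states this proposition without proof, so there is no argument to compare against; your derivation via the row-orthogonality relation $U(\xi)U(\xi)^*=I$ (obtained from the column relation~(\ref{cross QMF HD}) by squareness of $U(\xi)$), followed by Leibniz differentiation and induction on $|\alpha|$, is the natural route and fills the gap cleanly.
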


\subsubsection{Non-separability}\label{ssec: nonseparability}
It is a simple matter to construct smooth orthogonal compactly supported wavelets on ${\mathbb R}^n$ through a tensor-product construction. If $n=2$, we let $(\varphi_1 ,\psi_1 )$ and $(\varphi_2,\psi_2)$  be  smooth orthogonal compactly supported one-dimensional scaling function-wavelet pairs and define a two-dimensional scaling function $\Phi$ and three two-dimensional wavelets $\Psi_1,\Psi_2,\Psi_3$ by
\begin{align}
&\Phi (\xi _1,\xi_2)=\varphi_1 (\xi_1)\varphi_2 (\xi _2);\label{sep1}\\
\Psi_1(\xi _1,\xi_2)=\varphi_1(\xi_1)\psi_2(\xi_2);\quad&\Psi_2(\xi_1,\xi_2)=\psi_1(\xi_1)\varphi_2 (\xi_2);\quad \Psi_3(\xi_1,\xi_2)=\psi_1 (\xi_1)\psi_2 (\xi_2).\notag
\end{align}
Then the collection $\{\Psi_\varepsilon\}_{\varepsilon =1}^3$ generates a smooth orthogonal compactly supported wavelet basis on ${\mathbb R}^2$. Such systems, however, perform poorly in image processing applications, producing artefacts in the directions of the coordinate axes \cite{Kova-Vett}. Here we seek {\it non-separable} wavelet bases in which neither the scaling function nor the wavelets can be decomposed as the tensor product of two functions of a single variable. Although non-separability is not imposed as a constraint, it is a simple matter to check whether scaling functions and wavelets generated by our methods are separable.

Suppose a two-dimensional scaling function $\Phi$ is supported on $[0,M]^2$ and separable as in (\ref{sep1}). Let $m_0$ be the two-dimensional scaling filter associated with $\Phi$ and let $m_0^{(1)}$, $m_0^{(2)}$ be the one-dimensional scaling filters associated with $\varphi_1$ and $\varphi_2$ respectively. Then $m_0$ is separable:
$$m_0(\xi_1,\xi_2)=m_0^{(1)}(\xi_1)m_0^{(2)}(\xi_2)$$
and since $m_0^{(1)}(0)=m_0^{(2)}(0)=1$, we have
\begin{equation}
m_0(\xi_1,\xi_2)=m_0(\xi_1,0)m_0(0,\xi_2).\label{sep2}
\end{equation}
Recalling that $m_0(\xi_1,\xi_2)=\sum_{k_1,k_2=0}^{M-1}g_{k_1,k_2}^0e^{-2\pi i(k_1\xi_1+k_2\xi_2)}$, (\ref{sep2}) becomes 
$$\sum_{k_1,k_2=0}^{M-1}g_{k_1,k_2}^0e^{-2\pi i(k_1\xi_1+k_2\xi_2)}=\sum_{k_1,\ell=0}^{M-1}g_{k_1,\ell}^0e^{-2\pi ik_1\xi_1}\sum_{k_2,n=0}^{M-1}g_{n,k_2}^0e^{-2\pi ik_2\xi_2}$$
which is equivalent to
\begin{equation}
g_{p,q}^0=\bigg(\sum_{\ell=0}^{M-1}g_{p,\ell}^0\bigg)\bigg(\sum_{n=0}^{M-1}g_{n,q}^0\bigg).\label{sep3}
\end{equation}
Let $G^0$ be the $M\times M$ matrix with $(j,k)$-th entry $G^0_{j,k}=g^0_{j,k}$ $(0\leq j,k\leq M-1)$. Then (\ref{sep3}) is equivalent to the statement
$G^0=(G^0{\mathbf 1})((G^0)^T{\mathbf 1})^T$
where ${\mathbf 1}=(1,1,\dots ,1)^T\in{\mathbb R}^M$. As a measure of the separability of a two-dimensional scaling function $\varphi$, we compute its separability measure
$$S(\varphi )=\|G^0-(G^0{\mathbf 1})((G^0)^T{\mathbf 1})^T\|_2$$
where $\|\cdot\|_2$ is the Frobenius norm. Note that $S(\varphi )=0$ if and only if $\varphi$ is separable. We seek scaling functions with separability measure significantly larger than zero.

\section{Matrix formulation and  discretisation}\label{sec: formulation}

Equations (\ref{QMF HD}), (\ref{QMF2 HD}) and (\ref{cross QMF HD}) may be neatly organised as follows: the orthogonality of the collections $\{\tau_k\varphi\}_{k\in{\mathbb Z}^n}$ and $\{\tau_k\psi^\varepsilon\}_{k\in{\mathbb Z}^n}$ $(1\leq\varepsilon\leq 2^n-1)$ and the orthogonality of the spaces they span requires that the matrix-valued ${\mathbb Z}^n$-periodic function $U:{\mathbb R}^n\to{\mathbb C}^{2^n\times 2^n}$ given by 
\begin{equation}
U(\xi )_{j,\varepsilon }=m_\varepsilon (\xi +v_j/2)\qquad (0\leq j,\varepsilon\leq 2^n-1)\label{paraunitarity HD}
\end{equation}
is unitary for all $\xi$. When $n=1$, $U(\xi )$ is the $2\times 2$ matrix 
$$U(\xi )=\left(\begin{matrix}m_0(\xi )&m_1(\xi )\\
m_0(\xi +\frac{1}{2})&m_1(\xi +\frac{1}{2})\end{matrix}\right)$$
with $\xi\in{\mathbb R}$, while when $n=2$, $U(\xi )$ is the $4\times 4$ matrix
$$\left(\begin{matrix} m_0(\xi _1,\xi_2)&m_1(\xi_1,\xi_2)&m_2(\xi_1,\xi_2 )&m_3(\xi_1,\xi_2 )\\
m_0(\xi_1+\frac{1}{2},\xi_2 )&m_1(\xi_1+\frac{1}{2},\xi_2)&m_2(\xi_1+\frac{1}{2},\xi_2)&m_3(\xi_1+\frac{1}{2},\xi_2)\\
m_0(\xi_1,\xi_2+\frac{1}{2})&m_1(\xi_1,\xi_2+\frac{1}{2})&m_2(\xi_1,\xi_2+\frac{1}{2})&m_3(\xi_1,\xi_2+\frac{1}{2})\\
m_0(\xi+\frac{1}{2},\xi_2+\frac{1}{2})&m_1(\xi_1+\frac{1}{2},\xi_2+\frac{1}{2})&m_2(\xi_1+\frac{1}{2},\xi_2+\frac{1}{2})&m_3(\xi_1+\frac{1}{2},\xi_2+\frac{1}{2})\end{matrix}\right)$$
with $\xi=(\xi_1,\xi_2)\in{\mathbb R}^2$. 

The matrix-valued function $U$ of (\ref{paraunitarity HD}) 
holds the key to our approach to wavelet construction in one- and higher dimensions. In this section, we record the conditions on $U$ which encode the orthogonality, compact support and regularity conditions on the filters $m_\varepsilon$ of Section~\ref{sec: MRA orthog}. Then we explore a sampling-based approach to discretisation of the problem through use of the discrete Fourier transform. Finally, we use this discretisation to express the problem of wavelet construction as a feasibility problem in which the constraint sets live in a finite-dimensional Hilbert space of matrix ensembles.

\subsection{Matrix formulation}
In this section, the orthogonality, regularity and compact support conditions imposed on a scaling function and its associated wavelets are couched in terms of the matrix-valued function $U$ of (\ref{paraunitarity HD}). It is clear from the form of (\ref{paraunitarity HD}) that there are strong relationships between the rows of $U(\xi )$, and these relationships -- known here as {\it consistency conditions} --  also must be accounted for when designing such matrices for wavelet construction.

\subsubsection{Consistency}\label{sec: consistency HD}
Let $V^n$ be as in Section \ref{sec: MRA orthog}. We endow $V^n$ with a group structure, thinking of it as $({\mathbb Z}_2)^n$ with coordinate-wise addition modulo $2$:
\begin{equation}
(v_j\oplus v_k)_\ell=(v_j)_\ell +(v_k)_\ell\ \text{(mod $2$)}.\label{group op}
\end{equation}
Each $j\in Y_n=\{0,1,\dots ,2^n-1\}$ determines a permutation $\tau_j$ of $V_n$ given by 
$$v_{\tau_j(k)}=v_j\oplus v_k$$
and a permutation matrix $\sigma_j\in{\mathbb C}^{2^n\times 2^n}$ with $(k,\ell )$-th entry
\begin{equation}
(\sigma_j)_{k\ell}=\begin{cases}1&\text{ if $\tau_j(k)=\ell$}\\
0&\text{ else.}
\end{cases}\label{sigma def nD}
\end{equation}
Since $v_j\oplus v_k=v_k\oplus v_j$, $\sigma_j$ is symmetric. Further, since each $m_\varepsilon$ is ${\mathbb Z}^n$-periodic,
\begin{align*}
[\sigma_jU(\xi )]_{k\varepsilon}=\sum_{\ell =0}^{2^n-1}(\sigma_j)_{k\ell}U(\xi )_{\ell\varepsilon}&=\sum_{\{\ell ;\, \tau_j(k)=\ell\}}U(\xi )_{l\varepsilon}\\
&=U(\xi )_{\tau_j(k),\varepsilon}\\
&=m_\varepsilon (\xi +v_{\tau_j(k)}/2)\\
&=m_\varepsilon (\xi +(v_j\oplus v_k)/2)\\
&=m_\varepsilon (\xi +(v_j+v_k)/2)=U(\xi +v_j/2)_{k\varepsilon},
\end{align*}
from which we conclude that 
\begin{equation}
U(\xi +v_j/2)=\sigma_jU(\xi )\label{consistency 1}
\end{equation}
for all $\xi\in{\mathbb R}^n$ and all $v_j\in V^n$.

Since addition in $({\mathbb Z}_2)^n$ is commutative, so too is the collection of matrices $\{\sigma_j\}_{j\in Y_n}$.

\begin{proposition}\label{prop: perm comm} If $j\in Y_n$ has binary representation $j=\sum_{k=0}^{n-1}a_k2^k$ $(a_k\in\{0,1\})$ then $\sigma_j$ decomposes as 
$$\sigma_j=\prod_{k=0}^{n-1}(\sigma_{2^k})^{a_k}.$$
\end{proposition}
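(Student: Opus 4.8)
The plan is to recognise the assignment $v_j\mapsto\sigma_j$ as a group homomorphism from $(V^n,\oplus)$ into the group of $2^n\times 2^n$ permutation matrices, and then to read off the claimed factorisation from the binary decomposition of $v_j$ in $(\mathbb{Z}_2)^n$. First I would record the elementary fact that $v_{2^k}=e_k$, the $k$-th standard basis vector of $(\mathbb{Z}_2)^n$: indeed $2^k$ has binary digits $a_i=\delta_{ik}$, so $v_{2^k}=(0,\dots,0,1,0,\dots,0)$ with the $1$ in position $k$. Consequently, for $j$ with binary digits $a_0,\dots,a_{n-1}$ we obtain the decomposition $v_j=\bigoplus_{k=0}^{n-1}a_kv_{2^k}$ in $(V^n,\oplus)$, since coordinate-wise addition mod $2$ recovers $(a_0,\dots,a_{n-1})$.

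Next I would verify the homomorphism property $\sigma_j\sigma_{j'}=\sigma_{j''}$, where $v_{j''}=v_j\oplus v_{j'}$. At the level of the permutations $\tau_j$, the defining relation $v_{\tau_j(k)}=v_j\oplus v_k$ together with associativity of $\oplus$ gives $\tau_{j'}\circ\tau_j=\tau_{j''}$; that is, $v_j\mapsto\tau_j$ is the left regular representation of $(V^n,\oplus)$. Passing to matrices via (\ref{sigma def nD}), an entrywise computation of $(\sigma_j\sigma_{j'})_{k\ell}=\sum_m(\sigma_j)_{km}(\sigma_{j'})_{m\ell}$ shows that $\sigma_j\sigma_{j'}$ is the permutation matrix of the composition, whence $\sigma_j\sigma_{j'}=\sigma_{j''}$. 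Because $\oplus$ is commutative the order of composition is immaterial, so no anti-homomorphism subtlety arises. Noting also that $\tau_0$ is the identity permutation, we have $\sigma_0=I$ and hence $(\sigma_{2^k})^{a_k}=I$ whenever $a_k=0$.

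Finally I would apply the homomorphism to the decomposition $v_j=\bigoplus_k a_kv_{2^k}$: iterating $\sigma_j\sigma_{j'}=\sigma_{j''}$ yields $\sigma_j=\prod_{k=0}^{n-1}(\sigma_{2^k})^{a_k}$, the product being well defined independently of order by the commutativity of $\{\sigma_j\}_{j\in Y_n}$ already noted. The only point requiring care --- and the nearest thing to an obstacle --- is the bookkeeping relating composition of permutations to multiplication of their matrices; once commutativity of the group is invoked this is entirely routine, and the factors with $a_k=0$ collapse to the identity as above. An alternative, should one wish to avoid the representation-theoretic language, is a direct induction on the number of nonzero digits $a_k$, peeling off one factor $\sigma_{2^k}$ at a time using the single relation $\sigma_{2^k}\sigma_{j}=\sigma_{j'}$ with $v_{j'}=v_{2^k}\oplus v_j$; but the homomorphism argument is cleaner and makes the structural reason for the factorisation transparent.
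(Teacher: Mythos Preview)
Your proof is correct, and it takes a genuinely different route from the paper's own argument. The paper does not verify the homomorphism property $\sigma_j\sigma_{j'}=\sigma_{j''}$ directly from the definition of the permutation matrices; instead it leans on the consistency relation $U(\xi+v_j/2)=\sigma_jU(\xi)$ established just before the proposition. Writing $v_j=\sum_{k}a_kv_{2^k}$ and applying that relation repeatedly, the paper obtains
\[
\sigma_jU(\xi)=U\Big(\xi+\sum_{k}a_kv_{2^k}/2\Big)=\prod_{k=0}^{n-1}(\sigma_{2^k})^{a_k}\,U(\xi),
\]
and then cancels $U(\xi)$ using its unitarity. Your argument, by contrast, is purely combinatorial: you identify $j\mapsto\sigma_j$ with the left regular representation of $(V^n,\oplus)$ and read the factorisation off from $v_j=\bigoplus_k a_kv_{2^k}$. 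The advantage of your approach is that it is entirely self-contained and does not appeal to the existence of a unitary $U$ satisfying the consistency relation---which is, after all, part of what the paper is ultimately trying to construct. The paper's approach has the virtue of staying within the analytic framework already set up and foreshadowing how the $\sigma_j$ interact with $U$, but it tacitly relies on at least one such $U$ being available (e.g.\ the Haar example) for the cancellation step.
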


\begin{proof} Note that if $j$ has binary representation as in the statement of the proposition, then $v_j=(a_0, a_1,\dots ,a_{n-1})=\sum_{k=0}^{n-1}a_kv_{2^k}$. We apply (\ref{consistency 1}) repeatedly to find
\begin{align}
\sigma_jU(\xi )&=U(\xi +v_j/2)\notag\\
&=U\bigg(\xi +\sum_{k=0}^{n-1}a_kv_{2^k}/2\bigg)\notag\\
&=(\sigma_{2^{n-1}})^{a_{n-1}}U\bigg(\xi +\sum_{k=0}^{n-2}a_kv_{2^k}/2\bigg)
=\prod_{k=0}^{n-1}(\sigma_{2^k})^{a_k}U(\xi ).\label{prod perms}
\end{align}
Since the matrices $\{\sigma_j\}_{j\in Y_n}$ commute, the product in (\ref{prod perms}) is independent of the order of the factors, and the product is well-defined. Since $U(\xi )$ is unitary, the result follows from (\ref{prod perms}).
\end{proof}

\begin{corollary} The consistency condition (\ref{consistency 1}) holds for all $j\in Y_n$  and all $\xi\in{\mathbb R}^n$ if and only if 
$$U(\xi +v_{2^k}/2)=\sigma_{2^k}U(\xi )$$
for all $0\leq k\leq n-1$.
\end{corollary}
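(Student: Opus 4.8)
The plan is to prove the two implications separately, with the reverse direction carrying essentially all of the content. The forward implication is immediate: if the consistency condition (\ref{consistency 1}) holds for every $j\in Y_n$, then in particular it holds for the indices $j=2^k$ with $0\le k\le n-1$, which is exactly the asserted family of relations.

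For the reverse implication, suppose $U(\xi+v_{2^k}/2)=\sigma_{2^k}U(\xi)$ for all $0\le k\le n-1$ and all $\xi\in{\mathbb R}^n$, and fix an arbitrary $j\in Y_n$ with binary expansion $j=\sum_{k=0}^{n-1}a_k2^k$. The key observation is that $v_j=\sum_{k=0}^{n-1}a_kv_{2^k}$ in $V^n$ (the $v_{2^k}$ occupy distinct coordinates, so no reduction mod $2$ is needed), and hence $v_j/2=\sum_{k=0}^{n-1}a_kv_{2^k}/2$. The idea is then to reach the point $\xi+v_j/2$ from $\xi$ by a sequence of half-translations through the generators $v_{2^k}/2$, invoking the hypothesis once at each step. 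Concretely, I would enumerate the indices $k$ with $a_k=1$ and add the corresponding $v_{2^k}/2$ one at a time; since the hypothesised identity $U(\eta+v_{2^k}/2)=\sigma_{2^k}U(\eta)$ holds for \emph{every} $\eta\in{\mathbb R}^n$, at each stage I may take $\eta$ to be the partial sum accumulated so far. Iterating yields
\begin{equation*}
U(\xi+v_j/2)=\bigg(\prod_{k=0}^{n-1}(\sigma_{2^k})^{a_k}\bigg)U(\xi),
\end{equation*}
where the product is unambiguous because the matrices $\{\sigma_j\}_{j\in Y_n}$ commute.

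The one point that needs care — and the only place a circularity could creep in — is the identification $\prod_{k}(\sigma_{2^k})^{a_k}=\sigma_j$. This is precisely the statement of Proposition \ref{prop: perm comm}. Although that proposition was argued via $U$ and its unitarity, the identity it records is in fact purely group-theoretic and therefore legitimately available here: the assignment $j\mapsto\sigma_j$ is the permutation representation of the abelian group $V^n\cong({\mathbb Z}_2)^n$ acting on itself by the operation (\ref{group op}), so $\sigma_j\sigma_k=\sigma_{j\oplus k}$ (where $v_{j\oplus k}=v_j\oplus v_k$) with no reference to $U$, whence $\sigma_j=\prod_k(\sigma_{2^k})^{a_k}$. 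I would make this observation explicit to forestall any appearance of circularity. Substituting it into the displayed identity gives $U(\xi+v_j/2)=\sigma_jU(\xi)$ for the arbitrary index $j$, which is exactly (\ref{consistency 1}), completing the reverse implication. The main (and only genuine) obstacle is thus not computational but conceptual: verifying that the decomposition of $\sigma_j$ into the generators $\sigma_{2^k}$ does not secretly presuppose the full consistency we are trying to deduce.
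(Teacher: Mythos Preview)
Your proof is correct and follows the paper's intended approach: the corollary is meant to be an immediate consequence of Proposition~\ref{prop: perm comm}, obtained by iterating the hypothesis along the generators $v_{2^k}/2$ exactly as in the displayed calculation (\ref{prod perms}) and then invoking the factorisation $\sigma_j=\prod_k(\sigma_{2^k})^{a_k}$. Your observation about potential circularity is astute and your resolution is the right one---the conclusion of Proposition~\ref{prop: perm comm} is a statement purely about the permutation matrices $\sigma_j$ (independent of $U$), so once established it is freely available; alternatively, as you note, it can be re-derived directly from the group structure of $({\mathbb Z}_2)^n$ without any reference to $U$.
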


\subsubsection{Orthogonality/Unitarity}

The unitarity of the matrix $U(\xi )$ of (\ref{paraunitarity HD}) is not sufficient to ensure the orthogonalities we require.  In one dimension, {\it Cohen's condition} \cite{Coh} provides an easily checked sufficient condition. The following result (due to Bownik \cite{bownik}) is a generalisation of the one-dimensional Cohen condition. 

\begin{theorem}\label{thm: regularity-bownik} Suppose $m_0\in C^\infty ({\mathbb R}^n)$ is ${\mathbb Z}^n$-periodic and is such that the infinite product $\hat\varphi (\xi )=\prod_{j=1}^\infty m_0(2^{-j}\xi )$ converges in $L^2({\mathbb R}^n)$. Suppose also that there exists a compact set $K\subset{\mathbb R}^n$ such that 
\begin{enumerate}
\item[(i)] $K$ contains a neighbourhood of the origin;
\item[(ii)] $|K\cap (\ell +K)|=\delta_{\ell 0}$ for all $\ell\in{\mathbb Z}^d$;
\item[(iii)] $m_0(2^{-j}\xi )\neq 0$ for all integers $j> 0$ and all $\xi\in K$.
\end{enumerate}
Then $\{\varphi (\cdot -k)\}_{k\in{\mathbb Z}^n}$ forms an orthonormal set. If $m_0\in C^D({\mathbb R}^n)$ is ${\mathbb Z}^n$-periodic with $D>n/2$, then the converse is true.
\end{theorem}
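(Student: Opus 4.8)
The plan is to prove the two implications separately: I would establish sufficiency of $(i)$--$(iii)$ in full, since that is the direction actually used for wavelet construction, and then outline the converse, which is the technical part. Throughout I use that orthonormality of $\{\varphi(\cdot-k)\}_{k\in\mathbb Z^n}$ is equivalent to $\Phi(\xi):=\sum_{k\in\mathbb Z^n}|\hat\varphi(\xi+k)|^2=1$ a.e., that the QMF condition (\ref{QMF HD}) forces $|m_0|\le 1$ and $m_0(0)=1$, and that consequently $m_0(2^{-j}\xi)\to 1$ locally uniformly so that the product $\hat\varphi$ is continuous.

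For sufficiency I would introduce the truncations $\hat\varphi_N(\xi)=\bigl(\prod_{j=1}^N m_0(2^{-j}\xi)\bigr)\mathbf 1_{2^N K}(\xi)$ and first record the clean recursion $\hat\varphi_N(\xi)=m_0(\xi/2)\,\hat\varphi_{N-1}(\xi/2)$, which holds because $\mathbf 1_{2^{N-1}K}(\xi/2)=\mathbf 1_{2^N K}(\xi)$. The heart of the argument is to show by induction that the periodizations $\Phi_N(\xi):=\sum_k|\hat\varphi_N(\xi+k)|^2$ are identically $1$. Writing $k=2\ell+v_j$ with $\ell\in\mathbb Z^n$, $v_j\in V^n$ and using $\mathbb Z^n$-periodicity of $m_0$, the recursion gives
\[
\Phi_N(\xi)=\sum_{j=0}^{2^n-1}|m_0(\xi/2+v_j/2)|^2\,\Phi_{N-1}(\xi/2+v_j/2),
\]
so if $\Phi_{N-1}\equiv 1$ this collapses to $\sum_j|m_0(\xi/2+v_j/2)|^2=1$ by (\ref{QMF HD}). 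The base case $\Phi_0=\sum_k\mathbf 1_K(\cdot+k)\equiv 1$ is exactly the tiling encoded in hypothesis $(ii)$: $|K\cap(\ell+K)|=\delta_{\ell0}$ says the translates $\{K+\ell\}$ are essentially disjoint and, having measure $|K|=1$ per period, cover $\mathbb R^n$. In particular $\int_{\mathbb R^n}|\hat\varphi_N|^2=\int_{[0,1]^n}\Phi_N=1$ for every $N$.

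To pass to the limit I would use $(i)$ and $(iii)$. Hypothesis $(i)$ gives $2^N K\uparrow\mathbb R^n$, hence $\hat\varphi_N\to\hat\varphi$ pointwise. Hypothesis $(iii)$ makes $\hat\varphi$ continuous and nonvanishing on the compact set $K$, so $|\hat\varphi|\ge c>0$ there; then for $\xi\in 2^N K$ the factorisation $\hat\varphi(\xi)=\hat\varphi_N(\xi)\,\hat\varphi(2^{-N}\xi)$ (the tail product being $\hat\varphi(2^{-N}\xi)\neq 0$, again by $(iii)$) yields the uniform bound $|\hat\varphi_N|\le c^{-1}|\hat\varphi|\in L^2$. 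Dominated convergence then forces $\hat\varphi_N\to\hat\varphi$ in $L^2$, so $|\hat\varphi_N|^2\to|\hat\varphi|^2$ in $L^1(\mathbb R^n)$ and, periodising onto $[0,1]^n$, $\Phi_N\to\Phi$ in $L^1$. Since each $\Phi_N\equiv 1$ I conclude $\Phi\equiv 1$, i.e.\ orthonormality. I expect the only subtle point here to be the domination step, which is precisely why $(iii)$ (rather than mere convergence of the product) is needed: without it the pointwise limit can carry strictly less $L^2$ mass.

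For the converse I would assume orthonormality and construct $K$. With $m_0\in C^D$, $D>n/2$, the regularity makes $\hat\varphi$ continuous and gives it enough decay at infinity that the pointwise identity $\sum_k|\hat\varphi(\xi+k)|^2=1$ holds everywhere; evaluating at $\xi=0$ forces $\hat\varphi(0)=1$ and $\hat\varphi(k)=0$ for $k\neq 0$. My candidate is the set of dominant representatives $K=\{\xi:|\hat\varphi(\xi)|\ge|\hat\varphi(\xi+k)|\ \text{for all } k\in\mathbb Z^n\}$, refined on a null set to a measurable fundamental domain so that $(ii)$ holds. On $K$ one has $\hat\varphi\neq 0$ (else every translate would vanish, contradicting the sum $=1$), continuity together with $\hat\varphi(0)=1>|\hat\varphi(k)|$ places a neighbourhood of the origin inside $K$ giving $(i)$, and iterating $\hat\varphi(\xi)=m_0(\xi/2)\hat\varphi(\xi/2)$ converts $\hat\varphi\neq 0$ on $K$ into $(iii)$. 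The main obstacle will be showing that $K$ can be taken compact and that the dominant-representative selection is measurable and tiles cleanly; this is exactly where the decay of $\hat\varphi$ and the threshold $D>n/2$ enter, and I would defer the delicate construction to Bownik \cite{bownik}.
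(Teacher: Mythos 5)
The paper does not prove this theorem at all --- it is quoted verbatim from Bownik \cite{bownik} --- so there is no internal argument to compare against; your proof has to stand on its own, and for the sufficiency direction it essentially does. What you give is the classical Cohen-type truncation argument (the same scheme as in Bownik and in Daubechies' Lemma 6.3.2): the recursion $\hat\varphi_N(\xi)=m_0(\xi/2)\hat\varphi_{N-1}(\xi/2)$ is correct because $\mathbf{1}_{2^{N-1}K}(\xi/2)=\mathbf{1}_{2^NK}(\xi)$; the base case $\Phi_0\equiv 1$ does follow from (ii), since $|K|=1$ together with essential disjointness of the translates forces the $\{0,1\}$-valued periodization of $\mathbf{1}_K$ to equal $1$ a.e.; the inductive collapse via the splitting $k=2\ell+v_j$ is the right computation; and the domination $|\hat\varphi_N|\le c^{-1}|\hat\varphi|$ via the factorisation $\hat\varphi(\xi)=\hat\varphi_N(\xi)\hat\varphi(2^{-N}\xi)$ on $2^NK$, with $|\hat\varphi|\ge c>0$ on $K$ from (iii) plus continuity and compactness, is exactly the point where (iii) earns its keep, as you correctly observe. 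One caveat you should make explicit rather than bury in the preamble: the theorem as printed in the paper omits the QMF hypothesis (\ref{QMF HD}), and without it the forward implication is false --- in one dimension $m_0(\xi)=\cos^2(\pi\xi)$ with $K=[-1/2,1/2]$ gives $\hat\varphi(\xi)=\bigl(\sin(\pi\xi)/(\pi\xi)\bigr)^2$, the hat function, whose integer translates are not orthonormal. You were right to restore (\ref{QMF HD}), but note the small misattribution: the QMF condition gives $|m_0|\le 1$, while $m_0(0)=1$ is not a consequence of it; it must be imposed (or extracted from the requirement that the $L^2$ limit be nontrivial, since $|m_0(0)|<1$ forces the partial products to vanish pointwise), and it is what makes your local-uniform-convergence and continuity claims for $\hat\varphi$ legitimate.

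For the converse your sketch has the right shape but a genuine hole at the step you flag yourself: the dominant-representative set $K=\{\xi:|\hat\varphi(\xi)|\ge|\hat\varphi(\xi+k)| \text{ for all } k\}$ need not be bounded, let alone compact, and the selection need not tile measurably without further work. The standard repair is a finite-selection argument on the torus: continuity of the periodization $\Phi$ (this is precisely where $D>n/2$ is consumed) gives a uniform lower bound $|\hat\varphi(\xi+k(\xi))|\ge c_0$ achievable with finitely many shifts $k$ by compactness of $[0,1]^n$, from which a compact fundamental domain is assembled; conditions (i) and (iii) then follow as you indicate from $\hat\varphi(0)=1$ and iteration of $\hat\varphi(\xi)=m_0(\xi/2)\hat\varphi(\xi/2)$. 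Since you defer exactly this construction to \cite{bownik} --- which is precisely what the paper itself does for the whole theorem --- your proposal is acceptable as a proof of the forward direction plus a cited converse, but you should not present the converse paragraph as more than that.
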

Note that if $m_0$ is a trigonometric polynomial, then it is infinitely differentiable. From Theorem \ref{thm: regularity-bownik} we see that  if $m_\varepsilon $ $(0\leq\varepsilon\leq 2^n-1)$ are trigonometric polynomials for which the matrix $U(\xi )$ given by (\ref{paraunitarity HD}) is unitary, then the orthogonalities we require will be assured provided  $m_0$ has no zeroes on $[-1/4,1/4]^n$.

\subsubsection{Compact support}
In Section~\ref{cs HD}, we saw that  $\varphi$ being supported on $[0,M-1]^n$ is equivalent to the Fourier series $m_0$ being a trigonometric polynomial of the form $m_0(\xi )=\sum_{k\in Q_M^n}h_ke^{-2\pi i\langle k,\xi\rangle}$. The compact support of the wavelets $\psi_\varepsilon$ $(1\leq\varepsilon\leq 2^n-1)$ is equivalent to the Fourier series $m_\varepsilon$ $(1\leq\varepsilon\leq 2^n-1)$ having a similar form. This forces the matrix $U=U(\xi )$ to also be a trigonometric polynomial:
\begin{equation}
U(\xi )=\sum_{k\in Q_M^n}A_ke^{-2\pi i\langle k,\xi\rangle}\label{trig poly HD}
\end{equation}
where for each $k\in Q_M^n$,  $A_k$ is a constant $2^n\times 2^n$ matrix whose entries are the coefficients $g_k^\varepsilon$.

\subsubsection{Completeness}
As in section \ref{sec: completeness}, we note that the density of  $\cup_{j=-\infty}^\infty V_j$ in  $L^2({\mathbb R}^n)$ requires the conditions (\ref{completeness condits}) on the Fourier series $\{m_\varepsilon\}_{\varepsilon}^{2^n-1}$.  We define
$$1\otimes {\mathcal U}(2^n-1)=\left\{\left(\begin{matrix}1&{\mathbf 0}^T\\{\mathbf 0}&V\end{matrix}\right):\, V\in {\mathcal U}(2^n-1)\right\}.$$
Here ${\mathbf 0}=(0,0,\dots ,0)^T\in{\mathbb C}^n$.
Since $U(\xi )$ is unitary, the conditions (\ref{completeness condits}) can be summarised as:
\begin{equation*}
U(0)\in1\otimes {\mathcal U}(2^n-1).
\end{equation*}

\subsubsection{Regularity}
We define
$${\mathbb C}\otimes{{\mathbb C}^{(2^n-1)\times (2^n-1)}}=\left\{\left(\begin{matrix}b&{\mathbf 0}^T\\{\mathbf 0}&B\end{matrix}\right):\, b\in{\mathbb C},\ B\in{\mathbb C}^{(2^n-1)\times (2^n-1)}\right\}.$$
To enable the regularity of the wavelets we construct, we impose condition (\ref{flat filters}) of Theorem \ref{thm: regularity} on $\{m_\varepsilon\}_{\varepsilon =1}^{2^n-1}$. As we saw in Corollary \ref{cor: flat QMF}, this implies condition (\ref{m_0 flat}) on $m_0$. Together these conditions may be written in terms of the matrix-valued function $U$ of (\ref{paraunitarity HD}) as follows:
\begin{equation}
\partial^\alpha U(\xi )\bigg|_{\xi =0}\in{\mathbb C}\otimes{{\mathbb C}^{(2^n-1)\times (2^n-1)}}\text{ for $1\leq|\alpha |\leq d$.}\label{regularity U}
\end{equation}

In summary, the problem of the construction of compactly supported orthogonal smooth scaling functions and wavelets on the line is equivalent to the following:

\begin{problem}[Scaling function/wavelet pairs in ${\mathbb R}^n$]\label{prob:p1}
Given an even integer $M\geq 4$, we seek matrices $\{A_k\}_{k\in Q_M^n}\subset {\mathbb C}^{2^n\times 2^n}$ such that the trigonometric polynomial $U:\mathbb{R}^n\to\mathbb{C}^{2^n\times 2^n}$ given by (\ref{trig poly HD})
satisfies the following three conditions:
\begin{enumerate}
	\item[(i)] $U(\xi )$ is unitary for all $\xi\in\mathbb{R}^n$.
	\item[(ii)] $U(\xi +v_{2^j}/2)=\sigma_{2^j} U(\xi )$ for all $\xi\in\mathbb{R}$ and $0\leq j\leq n-1$ where $\sigma_{2^j}$ is as in (\ref{sigma def nD}).	
	\item[(iii)] $U(0)\in1\otimes {\mathcal U}(2^n-1)$.
	\end{enumerate}
To allow for regularity of the associated scaling function/wavelet pairs  we also impose 
\begin{enumerate}
	\item[(iv)] $\partial^\alpha U(\xi )\bigg|_{\xi =0}\in{\mathbb C}\otimes{\mathbb C}^{(2^n-1)\times (2^n-1)}$ for $1\leq |\alpha |\leq d$.
\end{enumerate}
\end{problem}
Conditions (i)--(iv) do not guarantee the orthogonality of the integer shifts of the scaling function. Bownik's sufficient condition for orthogonality may be written as follows:
\begin{enumerate}
	\item[(v)] $U(\xi )_{11}\neq 0$ for $\|\xi \|_\infty =\max_{1\leq j\leq n}|\xi _j|\leq 1/4$
\end{enumerate}
where $U(\xi )_{11}$ is the top left-hand entry of $U(\xi )$. Our algorithms are designed to find examples of sequences $\{A_k\}_{k\in Q_M^n}$ for which the function $U$ defined by (\ref{trig poly HD}) satisfies conditions (i)--(iv). After finding such an example, we discard it if (v) is not satisfied.

\subsection{Sampling and the discrete Fourier transform}\label{sec: sampling}
The assumption that the function $U=U(\xi )$ is a trigonometric polynomial allows for discretisation through sampling. We use this observation to recast conditions (i)-(iv) of Problem \ref{prob:p1} into constraints on a finite number of coefficient matrices $\{A_k\}_{k\in Q_M^n}$.

If $B,C\in{\mathbb C}^{N\times N}$, we define the inner product $\langle B,C\rangle$ by
$\langle B,C\rangle =\text{tr}(BC^*)=\sum_{i,j=1}^Nb_{ij}\overline{c_{ij}}$.
The norm arising from this inner product is the Frobenius norm $\|\cdot\|_2$. Let
$L^2([0,1]^n,{\mathbb C}^{N\times N})$ be the collection of measurable functions $F:[0,1]^n\to{\mathbb C}^{N\times N}$ for which $\int_{[0,1]^n}\|F(\xi )\|_2^2\, d\xi <\infty$.
Given $F,G\in L^2([0,1]^n,{\mathbb C}^{N\times N})$, we declare the inner product $\langle F,G\rangle$ to be
$$\langle F,G\rangle=\int_{[0,1]^n}\langle F(\xi ),G(\xi )\rangle\, d\xi .$$
The sequence space $\ell^2({\mathbb Z}^n,{\mathbb C}^{N\times N})$ is the collection of functions 
${\mathbf C}:{\mathbb Z}^n\to{\mathbb C}^{N\times N}$ for which $\sum_{k\in{\mathbb Z}^n}\|C_k\|_2^2<\infty$.
The inner product of ${\mathbf B}$ and ${\mathbf C}\in\ell^2({\mathbb Z}^n,{\mathbb C}^{N\times N})$ is given by  $\langle {\mathbf B},{\mathbf C}\rangle =\sum_{k\in{\mathbb Z}^n}\langle B_k,C_k\rangle$. The Fourier transform ${\mathcal F}:\ell^2({\mathbb Z}^n,{\mathbb C}^{N\times N})\to L^2([0,1]^n,{\mathbb C}^{N\times N})$  given by 
${\mathcal F}({\mathbf A})(\xi )=\sum_{k\in{\mathbb Z}^n} A_ke^{-2\pi i\langle k,\xi\rangle}$
is a unitary mapping with inverse ${\mathcal F}^{-1}$ given by 
$({\mathcal F}^{-1}G)_k=\int_{[0,1]^n}G(\xi )e^{2\pi i\langle k,\xi\rangle}\, d\xi$ whenever the integral converges.
 The space ${\mathcal T}_{M,n}^N$ of $N\times N$ matrix-valued trigonometric polynomials of degree less than $M-1$ is
$${\mathcal T}_{M,n}^N=\bigg\{P:{\mathbb R}^n\to{\mathbb C}^{N\times N};\ P(\xi )=\sum_{k\in Q_M^n}A_ke^{-2\pi i\langle k,\xi\rangle}\text{ for some }\{A_k\}_{k\in Q_M^n}\subset{\mathbb C}^{N\times N}\bigg\}$$
and the finite sequence space ${\mathcal X}_{M,n}^N$ is given by
$${\mathcal X}_{M,n}^N=\{{\mathbf C}\in\ell^2({\mathbb Z}^n,{\mathbb C}^{N\times N});\ C_k=0\text{ if }k\notin Q_M^n\}.$$
We note that ${\mathcal T}_{M,n}^N$ is a closed subspace of $L^2([0,1]^n,{\mathbb C}^{N\times N})$ and ${\mathcal X}_{M,n}^N$ is a closed subspace of $\ell^2({\mathbb Z}^n,{\mathbb C}^{N\times N})$. The Fourier transform may be restricted to ${\mathcal X}_{M,n}^N$, and in doing so it becomes a unitary mapping of ${\mathcal X}_{M,n}^N$ onto ${\mathcal T}_{M,n}^N$ which we continue to denote ${\mathcal F}$.

The orthogonal projection $P_{M,n}^N$ from $L^2([0,1]^n,{\mathbb C}^{N\times N})$ onto ${\mathcal T}_{M,n}^N$ is given by
$$P_{M,n}^NF(\xi )=\int_{[0,1]^n}K_M^N(\xi -\eta )F(\xi )\, d\eta$$
where 
$$K_M^N(\xi )=\begin{cases}e^{-\pi i(M-1)(\xi_1+\cdots +\xi_n)}\prod_{j=1}^n\frac{\sin (\pi M\xi_j)}{\sin (\pi\xi_j)}&\text{ if $\xi\neq 0$}\\
M^n&\text{ if $\xi =0$}\end{cases}$$
and the orthogonal projection $R_{M,n}^N$ from $\ell^2({\mathbb Z}^n,{\mathbb C}^{N\times N})$ onto ${\mathcal X}_{M,n}^N$ is given by
$$(R_{M,n}^N({\mathbf C}))_k=\begin{cases}C_k&\text{ if $k\in Q_M^n$}\\
0&\text{ else.}
\end{cases}$$

The sampling operator $D_{M,n}^N:{\mathcal T}_{M,n}^N\to({\mathbb C}^{N\times N})^{Q_M^n}$ is given by $(D_{M,n}^NP)_j=P(j/M)$ $(j\in Q_M^n)$ and there is an obvious isomorphism $\tau$ between $({\mathbb C}^{N\times N})^{Q_M^n}$ and ${\mathcal X}_{M,n}^N$, namely
$$(\tau{\mathbf A})_j=\begin{cases}
A_j&\text{ if $j\in Q_M^n$}\\
0&\text{ else.}
\end{cases}$$

The discrete Fourier transform ${\mathcal F}_M:({\mathbb C}^{N\times N})^{Q_M^n}\to({\mathbb C}^{N\times N})^{Q_M^n}$ is given by
$$({\mathcal F}_M{\mathbf B})_j=\sum_{k\in Q_M^n}B_ke^{-2\pi i\langle k,j\rangle /M},$$
with inverse   given by 
$({\mathcal F}_M^{-1}{\mathbf A})_k=M^{-n}\sum_{j\in M_n}A_je^{2\pi i\langle j,k\rangle /M}$.
Given $U\in{\mathcal T}_{M,n}^N$, we form the {\it matrix ensemble} ${\mathbf U}\in ({\mathbb C}^{N\times N})^{Q_M^n}$ by uniform sampling: the $j$-th entry of ${\mathbf U}$ is  $U_j=U(j/M)$ $(j\in Q_M^n)$, i.e., ${\mathbf U}=D_{M,n}^NU$. Furthermore, if $U(\xi )=\sum_{k\in Q_M^n}A_ke^{-2\pi i\langle k,\xi\rangle}$, then
\begin{equation}
U_j=U(j/M)=\sum_{k\in Q_M^n}A_ke^{-2\pi i\langle j,k\rangle /M};\quad A_k=\frac{1}{M^n}\sum_{j\in Q_M^n}U_je^{2\pi i\langle j,k\rangle /M},\label{FT pair}
\end{equation}
i.e., the ensembles ${\mathbf U}$ and ${\mathbf A}$ form a (finite) Fourier transform pair. For this reason, properties of $U=U(\xi )$ may be encoded into its samples $U_j=U(j/M)$ $(j\in Q_M^n)$ by way of its coefficients $A_k$ $(k\in Q_M^n)$. Written in the ``ensemble'' notation, we denote the finite Fourier transform operations of equation (\ref{FT pair}) as follows:
\begin{equation*}
{\mathbf U}={\mathcal F}_M{\mathbf A};\quad {\mathbf A}=({\mathcal F}_M)^{-1}{\mathbf U}=({\mathcal F}_M)^{-1}D_M^NU;\quad {U}={\mathcal F}{\mathbf A}.
\end{equation*}
These relationships are summarised in the following commuting diagram.
\[
  \begin{tikzcd}
    {\mathbf A}\in{\mathcal X}_{M,n}^N \arrow{r}{{\mathcal F}} \arrow[swap]{dr}{{\mathcal F}_M} & U\in{\mathcal T}_{M,n}^N \arrow{d}{D_{M,n}^N} \\
     & {\mathbf U}\in({\mathbb C}^{N\times N})^{Q_M^n}
  \end{tikzcd}
\]

\subsection{Discretisation}
Sampling and the discrete Fourier transform provide a means through which Problem  \ref{prob:p1} may be discretised in the sense that the construction of a matrix-valued function $U(\xi )$ $(\xi\in{\mathbb R}^n)$ satisfying the conditions of Problem \ref{prob:p1} may be replaced by the construction of finitely many matrices $U_j$ satisfying a compatible collection of conditions.

\subsubsection{Consistency}
The consistency condition (ii) of Problem \ref{prob:p1} can be written in terms of the coefficient matrices $\{A_k\}_{k\in Q_M^n}$ or the sampled matrices $\{U_j=U(j/M)\}_{j\in Q_M^n}$.

\begin{proposition}\label{prop: U-A consistency} Let $P$ be the trigonometric polynomial $P(\xi )=\sum_{k\in Q_M^n}A_ke^{-2\pi i\langle k,\xi\rangle}$ $(\{A_k\}_{k\in Q_M^n}\subset{\mathbb C}^{2^n\times 2^n}$) and $P_j=P(j/M)$ $(j\in Q_M^n)$. Then the following are equivalent:
\begin{enumerate}
\item[(i)] $P(\xi +v_{2^\ell}/2)=\sigma_{2^\ell} P(\xi )$ for all $\xi\in{\mathbb R}^n$ and all $\ell\in\{0,1,\dots ,n-1\}$
\item[(ii)] $\sigma_{2^\ell}A_k=(-1)^{k_\ell}A_k$ for all $k=(k_1,k_2,\dots ,k_n)\in Q_M^n$ and all $\ell\in\{0,1,\dots ,n-1\}$
\item[(iii)] $P_{j+Mv_{2^\ell}/2}=\sigma_{2^\ell} P_j$ for all $j\in Q_M^n$ and all $\ell\in\{0,1,\dots ,n-1\}$.
\end{enumerate}
\end{proposition}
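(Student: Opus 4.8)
The plan is to establish the three-way equivalence through the chain (i)$\Leftrightarrow$(ii) together with (i)$\Rightarrow$(iii)$\Rightarrow$(ii), which closes the loop. Everything rests on one elementary phase identity. Since $v_{2^\ell}$ is the unit vector whose only nonzero entry is a $1$ in the coordinate indexed by $\ell$, we have $\langle k,v_{2^\ell}\rangle=k_\ell$, and because $k_\ell\in{\mathbb Z}$,
$$e^{-2\pi i\langle k,v_{2^\ell}/2\rangle}=e^{-\pi i k_\ell}=(-1)^{k_\ell}.$$
Having this in hand makes each implication a short computation.

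For (i)$\Leftrightarrow$(ii), I would substitute the series for $P$ into both sides of (i). Using the phase identity, the left-hand side $P(\xi+v_{2^\ell}/2)$ becomes $\sum_{k\in Q_M^n}(-1)^{k_\ell}A_k\,e^{-2\pi i\langle k,\xi\rangle}$, while the right-hand side $\sigma_{2^\ell}P(\xi)$ equals $\sum_{k\in Q_M^n}\sigma_{2^\ell}A_k\,e^{-2\pi i\langle k,\xi\rangle}$. Both are matrix-valued trigonometric polynomials supported on the same frequencies, so by uniqueness of Fourier coefficients (the exponentials $\{e^{-2\pi i\langle k,\xi\rangle}\}_{k\in Q_M^n}$ are orthogonal in $L^2([0,1]^n)$) the identity (i) holds for every $\xi$ precisely when the matrix coefficients agree for each $k$, namely $\sigma_{2^\ell}A_k=(-1)^{k_\ell}A_k$, which is (ii).

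For (i)$\Rightarrow$(iii), I would simply evaluate (i) at $\xi=j/M$, giving $P(j/M+v_{2^\ell}/2)=\sigma_{2^\ell}P_j$. Since $M$ is even (Problem~\ref{prob:p1}), the shift $Mv_{2^\ell}/2=(M/2)v_{2^\ell}$ has integer entries, so $j/M+v_{2^\ell}/2=(j+Mv_{2^\ell}/2)/M$ is a genuine sample point; the ${\mathbb Z}^n$-periodicity of the trigonometric polynomial $P$ shows that $P_{j'}=P(j'/M)$ depends only on $j'\bmod M$, so $P_{j+Mv_{2^\ell}/2}$ is well defined and equals the left-hand side, yielding (iii). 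For (iii)$\Rightarrow$(ii), I would apply the inverse discrete Fourier transform from (\ref{FT pair}), $A_k=M^{-n}\sum_{j\in Q_M^n}P_j\,e^{2\pi i\langle j,k\rangle/M}$. Multiplying by $\sigma_{2^\ell}$ and using (iii) gives $\sigma_{2^\ell}A_k=M^{-n}\sum_{j}P_{j+Mv_{2^\ell}/2}\,e^{2\pi i\langle j,k\rangle/M}$. The reindexing $j'=j+Mv_{2^\ell}/2$ is a bijection of $(\mathbb{Z}/M\mathbb{Z})^n$, and writing $\langle j,k\rangle=\langle j',k\rangle-(M/2)k_\ell$ factors out the phase $e^{-\pi i k_\ell}=(-1)^{k_\ell}$, recovering $\sigma_{2^\ell}A_k=(-1)^{k_\ell}A_k$.

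The main obstacle I anticipate is the bookkeeping in the last implication: one must verify that translation by $(M/2)v_{2^\ell}$ permutes $Q_M^n$ modulo $M$, that $e^{2\pi i\langle j,k\rangle/M}$ is genuinely well defined on $(\mathbb{Z}/M\mathbb{Z})^n$ (changing a coordinate of $j$ by $M$ alters the exponent by a multiple of $2\pi i$), and that the phase accumulated in the reindexing matches exactly the sign $(-1)^{k_\ell}$ of (ii). Each of these hinges on $M$ being even, so I would flag that hypothesis explicitly; the remaining steps are direct consequences of uniqueness of Fourier coefficients and pointwise evaluation.
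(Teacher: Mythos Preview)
Your proposal is correct and follows essentially the same route as the paper: both arguments hinge on the identity $e^{-2\pi i\langle k,v_{2^\ell}/2\rangle}=(-1)^{k_\ell}$, expand the trigonometric polynomial, and compare Fourier coefficients to obtain (i)$\Leftrightarrow$(ii), then pass between (ii) and (iii) via the discrete Fourier transform pair of (\ref{FT pair}). The only cosmetic difference is that the paper proves (ii)$\Rightarrow$(iii) directly (and leaves (iii)$\Rightarrow$(ii) as ``similarly''), whereas you obtain (iii) by evaluating (i) at sample points and then close the loop with the inverse DFT; your explicit flagging of the hypothesis that $M$ is even is a welcome addition.
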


\begin{proof} Suppose $P$ satisfies the consistency condition (i). Then 
\begin{align*}
\sum_{k\in Q_M^n}A_k(-1)^{k_\ell}e^{-2\pi i\langle k,\xi\rangle}&=\sum_{k\in Q_M^n}A_ke^{-2\pi i\langle k,\xi+v_{2^\ell}/2\rangle}\\
&=P(\xi +v_{2^\ell}/2)=\sigma_{2^\ell} P(\xi )=\sum_{k\in Q_M^n}\sigma_{2^\ell} A_ke^{-2\pi i\langle k,\xi\rangle}.
\end{align*}
Comparing coefficients in the sums on both sides of this equality gives $\sigma_{2^\ell} A_k=(-1)^{k_\ell}A_k$, hence (i) $\Rightarrow$ (ii). A similar calculation gives the converse. Now suppose $\{A_k\}_{k\in Q_M^n}$ satisfies (ii). Then 
\begin{align*}
P_{j+Mv_{2^\ell}/2}&=\sum_{k\in Q_M^n}A_ke^{-2\pi i\langle k,j+Mv_{2^\ell /2}\rangle /M}\\
&=\sum_{k\in Q_M^n}A_k(-1)^{k_\ell}e^{-2\pi i\langle j,k\rangle /M}=\sigma_{2^\ell}\sum_{k\in Q_M^n}A_ke^{-2\pi i\langle j,k\rangle /M}=\sigma_{2^\ell} P_j
\end{align*}
so that (ii)$\Rightarrow$(iii). The converse is proved similarly.
\end{proof}

\subsubsection{Orthogonality/Unitarity} 
The discretisation of the problem of constructing wavelet matrices $U(\xi )$ in $n$ dimensions relies on the fact that the sampling operator $D_{M,n}^N:{\mathcal T}_{M,n}^N\to{\mathcal X}_{M,n}^N$ is a multiple of a unitary operator. As we saw at the start of this section, the orthogonality of the collections $\{\tau_k\varphi\}_{k\in{\mathbb Z}^n}$ and $\{\tau_k\psi_\varepsilon\}_{k\in{\mathbb Z}^n}$ $(1\leq\varepsilon\leq 2^n-1)$ and the orthogonality of the spaces they span requires that $U(\xi )$ as given in (\ref{paraunitarity HD})  be unitary for all $\xi$. 
It is not sufficient to impose unitarity of the samples $U_j=U(j/M)$. To see this, consider the one-dimensional example 
$$U(\xi )=\frac{1}{4}[2(I_2+\sigma)+(1+i)(I_2-\sigma )e^{-2\pi i\xi}+(1-i)(I_2-\sigma )e^{-6\pi i\xi}]\quad (\xi\in {\mathbb R}).$$
Here $I_2$ is the $2\times 2$ identity matrix and $\sigma =\left(\begin{matrix}0&1\\1&0\end{matrix}\right)$. Since $\sigma^2=I_2$, $U$ satisfies the consistency condition $U(\xi +1/2)=\sigma U(\xi )$. Furthermore, $U(0)=U(1/4)=I_2$ while $U(1/2)=U(3/4)=\sigma$, all of which are unitary, yet 
$U(1/8)=\dfrac{1}{2}(I_2+\sigma )$ which is not unitary since $U(1/8)^*U(1/8)=\dfrac{1}{2}(I_2+\sigma )$.

\begin{proposition}\label{prop: discrete unitary} Let $\{A_k\}_{k\in Q_M^n}\subset{\mathbb C}^{2^n\times 2^n}$. The trigonometric polynomial  $U(\xi )=\sum_{k\in Q_M^n}A_ke^{-2\pi i\langle k,\xi\rangle}$, $(\xi\in{\mathbb R}^n)$ is unitary for all $\xi\in{\mathbb R}^n$ if and only if $U(j/(2M))$ is unitary for all $j\in Q_{2M}^n$.
\end{proposition}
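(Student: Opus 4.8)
The ``only if'' direction is immediate: if $U(\xi)$ is unitary for every $\xi\in\mathbb{R}^n$, then in particular it is unitary at each sample point $\xi=j/(2M)$, $j\in Q_{2M}^n$. The content is the converse, and the plan is to reduce everywhere-unitarity to a statement about a single matrix-valued trigonometric polynomial being recovered from its samples. Since $U^*U=I$ is equivalent to $U$ being unitary for a square matrix, it suffices to show that the Gram function $G(\xi)=U(\xi)^*U(\xi)$ equals the identity for all $\xi$.

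First I would pin down the Fourier support of $G$. Writing $U(\xi)=\sum_{k\in Q_M^n}A_ke^{-2\pi i\langle k,\xi\rangle}$ and taking the conjugate transpose gives
\[
G(\xi)=U(\xi)^*U(\xi)=\sum_{k,l\in Q_M^n}A_l^*A_k\,e^{-2\pi i\langle k-l,\xi\rangle}=\sum_{m}C_m\,e^{-2\pi i\langle m,\xi\rangle},\qquad C_m=\sum_{\substack{k,l\in Q_M^n\\ k-l=m}}A_l^*A_k,
\]
so $G$ is again a matrix-valued trigonometric polynomial, but its frequencies $m=k-l$ now range over the enlarged cube $\{-(M-1),\dots,M-1\}^n$. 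The crucial point --- and the reason the proposition samples on the finer grid $Q_{2M}^n$ rather than $Q_M^n$ --- is that the product $U^*U$ has \emph{twice} the bandwidth of $U$. The counterexample preceding the statement shows exactly that sampling $U$ at the coarser rate $j/M$ fails to detect this.

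The heart of the argument is that a trigonometric polynomial with frequencies confined to $\{-(M-1),\dots,M-1\}^n$ is recovered exactly from its values on $\{j/(2M):j\in Q_{2M}^n\}$ by a $2M$-point inverse discrete Fourier transform. Setting $\omega=e^{-\pi i/M}$, a primitive $2M$-th root of unity, I would verify the inversion identity
\[
\frac{1}{(2M)^n}\sum_{j\in Q_{2M}^n}G(j/(2M))\,\omega^{-\langle m,j\rangle}=\sum_{m'}C_{m'}\Bigg(\frac{1}{(2M)^n}\sum_{j\in Q_{2M}^n}\omega^{\langle m'-m,j\rangle}\Bigg)=C_m,
\]
where the bracketed sum factors over coordinates into geometric sums that vanish unless $m'\equiv m\pmod{2M}$ componentwise. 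Because any two frequencies in $\{-(M-1),\dots,M-1\}$ differ in absolute value by at most $2M-2<2M$, componentwise congruence mod $2M$ forces $m'=m$, so only the $m'=m$ term survives and the samples determine every coefficient $C_m$.

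Finally I would feed in the hypothesis $G(j/(2M))=I$ for all $j\in Q_{2M}^n$. The inversion formula then returns $C_0=I$ and $C_m=0$ for every nonzero frequency $m$, whence $G(\xi)=I$ identically and $U(\xi)$ is unitary for all $\xi$. The only real obstacle is the bookkeeping needed to establish that the frequency cube of $U^*U$ has half-width $M-1$ in each coordinate; this half-width being strictly less than $M$ is precisely what makes the frequencies distinct modulo $2M$, and hence what makes the finer sampling grid $Q_{2M}^n$ both necessary and sufficient.
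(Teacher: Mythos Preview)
Your argument is correct and follows essentially the same route as the paper: both compute $U(\xi)^*U(\xi)$ as a matrix-valued trigonometric polynomial with frequencies in $\{-(M-1),\dots,M-1\}^n$, then use that such a polynomial is determined by its samples on $Q_{2M}^n/(2M)$ to conclude all non-constant Fourier coefficients vanish. The paper phrases the recovery step as orthonormality of the discrete Fourier basis, whereas you spell out the geometric-sum/no-aliasing argument explicitly, but the substance is identical.
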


\begin{proof} Let $J_M^n=\{1-M,\dots ,0,\dots ,M-1\}^n$. If $U$ is unitary for all $\xi$, then it is clearly unitary at all $\xi\in Q_{2M}^n/(2M)$. Note that for all $\xi\in{\mathbb R}^n$,
\begin{equation}
U(\xi )^*U(\xi )=\sum_{m\in J_M^n}B_me^{-2\pi i\langle m,\xi\rangle}\label{continuous unitarity}
\end{equation}
with $B_m=\sum_{k\in Q_M^n}A_k^*A_{m+k}$ $(m\in J_M^n)$.  Suppose now that $U$  is unitary at all points of $Q_{2M}^n/(2M)$, i.e., $U(j/(2M))$ is unitary for all $j\in Q_{2M}^n$. Then for all $j\in Q_{2M}^n$ we have
\begin{equation}
I_n=U(j/(2M))^*U(j/(2M))=\sum_{m\in J_M^n}B_me^{-2\pi i\langle m,j\rangle /(2M)}.\label{discrete unitarity}
\end{equation}
By the orthonormality and completeness of the Fourier basis $\{e_m\}_{m\in Q_{2M}^n}$ (where $e_m(j)=e^{-2\pi i\langle m,j\rangle/(2M)}$) in $\ell^2(J_M^n,{\mathbb C})$, we conclude from (\ref{discrete unitarity}) that $B_m=\delta_{m,0}I_n$ and from (\ref{continuous unitarity}) that $U(\xi )^*U(\xi )=I_n$ for all $\xi\in{\mathbb R}^n$.
\end{proof}

Note that Proposition \ref{prop: discrete unitary} involves the sampled ensemble $\{U(j/(2M))\}_{j\in Q_{2M}^n}$ rather than $\{U(j/M)\}_{j\in Q_M^n}$. In Section \ref{sec: wavelet feasibility}, we'll see that since
$$Q_{2M}^n/(2M)=\cup_{k=0}^{2^n-1}(Q_M^n+v_{k}/2)/M,$$
the  ensemble $\{U(j/(2M))\}_{j\in Q_{2M}^n}$ can be computed from ${\mathbf U}=\{U(j/M)\}_{j\in Q_M^n}$, so that unitarity of $U(\xi )$ at all $\xi$ can be achieved by the imposition of appropriate conditions on $\{U(j/M)\}_{j\in Q_M^n}$.

\subsubsection{Regularity} The regularity condition (iv) of Problem \ref{prob:p1} can be written in terms of the coefficient matrices $\{A_k\}_{k\in Q_M^n}$ or the sampled matrices $\{U_j=U(j/M)\}_{j\in Q_M^n}$.

\begin{proposition} \label{prop: discrete regularity}Let $P$ be the trigonometric polynomial $P(\xi )=\sum_{k\in Q_M^n}A_ke^{-2\pi i\langle k,\xi\rangle}$ $(\{A_k\}_{k\in Q_M^n}\subset{\mathbb C}^{N\times N}$), $P_j=P(j/M)$ $(j\in Q_M^n)$ and $\alpha =(\alpha_1,\dots ,\alpha_N)\in{\mathbb Z}_+^N$. Then the following are equivalent:
\begin{enumerate}
\item[(i)] $\partial^\alpha P(\xi )\bigg|_{\xi =0}\in{\mathbb C}\otimes{\mathbb C}^{(N-1)\times (N-1)}$
\item[(ii)] $\sum_{k\in Q_M^n}k^\alpha A_k\in{\mathbb C}\otimes{\mathbb C}^{(N-1)\times (N-1)}$
\item[(iii)] $\sum_{j\in Q_M^n}c_{\alpha j}P_j\in{\mathbb C}\otimes{\mathbb C}^{(N-1)\times (N-1)}$ where $c_{\alpha j}=\sum_{k\in Q_M^n}k^\alpha e^{2\pi i\langle j,k\rangle /M}$.
\end{enumerate}
\end{proposition}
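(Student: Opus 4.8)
The plan is to establish the two equivalences (i) $\Leftrightarrow$ (ii) and (ii) $\Leftrightarrow$ (iii) by direct computation, exploiting throughout the single structural fact that ${\mathbb C}\otimes{\mathbb C}^{(N-1)\times(N-1)}$ is a complex-linear subspace of ${\mathbb C}^{N\times N}$, and hence is invariant under multiplication by any nonzero scalar. This reduces each equivalence to showing that the two matrices in question differ only by a nonzero scalar factor, so that one lies in the subspace if and only if the other does.

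For (i) $\Leftrightarrow$ (ii), I would differentiate the finite series defining $P$ term by term. Since $\partial^\alpha e^{-2\pi i\langle k,\xi\rangle}=(-2\pi i)^{|\alpha|}k^\alpha e^{-2\pi i\langle k,\xi\rangle}$, where $k^\alpha=\prod_{\ell=1}^{n}k_\ell^{\alpha_\ell}$, evaluating at $\xi=0$ yields
$$\partial^\alpha P(\xi)\bigg|_{\xi=0}=(-2\pi i)^{|\alpha|}\sum_{k\in Q_M^n}k^\alpha A_k.$$
As $(-2\pi i)^{|\alpha|}\neq 0$, the left-hand side belongs to ${\mathbb C}\otimes{\mathbb C}^{(N-1)\times(N-1)}$ precisely when $\sum_{k\in Q_M^n}k^\alpha A_k$ does, which is exactly the equivalence of (i) and (ii).

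For (ii) $\Leftrightarrow$ (iii), I would substitute the inversion formula $A_k=M^{-n}\sum_{j\in Q_M^n}P_j\,e^{2\pi i\langle j,k\rangle/M}$ from the Fourier pair (\ref{FT pair}) into $\sum_{k}k^\alpha A_k$ and interchange the two finite summations. Collecting the coefficient multiplying each $P_j$ reproduces exactly $c_{\alpha j}=\sum_{k\in Q_M^n}k^\alpha e^{2\pi i\langle j,k\rangle/M}$, giving
$$\sum_{k\in Q_M^n}k^\alpha A_k=\frac{1}{M^n}\sum_{j\in Q_M^n}c_{\alpha j}P_j.$$
Once more the two sides differ only by the nonzero scalar $M^{-n}$, so membership of one in ${\mathbb C}\otimes{\mathbb C}^{(N-1)\times(N-1)}$ is equivalent to membership of the other, establishing (ii) $\Leftrightarrow$ (iii).

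There is no genuine analytic obstacle here, since both equivalences are finite and exact identities. The only points requiring care are the observation that the target set is scalar-invariant, so that the constants $(-2\pi i)^{|\alpha|}$ and $M^{-n}$ may be discarded without affecting the conclusion, together with the routine justification of the interchange of the two finite sums in the second step. Chaining the two equivalences then yields (i) $\Leftrightarrow$ (ii) $\Leftrightarrow$ (iii).
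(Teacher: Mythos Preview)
Your proposal is correct and follows essentially the same approach as the paper: differentiate term by term to get $\partial^\alpha P(0)=(-2\pi i)^{|\alpha|}\sum_k k^\alpha A_k$ for (i)$\Leftrightarrow$(ii), then substitute the inverse discrete Fourier formula $A_k=M^{-n}\sum_j P_j e^{2\pi i\langle j,k\rangle/M}$ and swap sums for (ii)$\Leftrightarrow$(iii). If anything, you are slightly more explicit than the paper in noting that the nonzero scalars $(-2\pi i)^{|\alpha|}$ and $M^{-n}$ can be dropped because ${\mathbb C}\otimes{\mathbb C}^{(N-1)\times(N-1)}$ is a linear subspace.
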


\begin{proof} We have $A_k=\dfrac{1}{M^n}\sum_{j\in Q_M^n}P_je^{2\pi i\langle j,k\rangle /M}$. Furthermore,
$$\partial^\alpha P(\xi )=(-2\pi i)^{|\alpha |}\sum_{k\in Q_M^n}k^\alpha A_ke^{-2\pi i\langle k,\xi\rangle}$$
so that 
$$\sum_{k\in Q_M^n}k^\alpha A_k=\frac{1}{M^n}\sum_{j\in Q_M^n}P_j\sum_{k\in Q_M^n}k^\alpha e^{2\pi i\langle j,k\rangle /M}=\frac{1}{M^n}\sum_{j\in Q_M^n}c_{\alpha j}P_j$$
with $c_{\alpha j}$ as in the statement of the proposition.
\end{proof}

The wavelet construction problem has now been recast as follows:

\begin{problem}\label{matrix construction} Given an even integer $M\geq 4$, we seek a matrix ensemble ${\mathbf U}\in({\mathbb C}^{2^n\times 2^n})^{Q_M^n}$ such that 
\begin{enumerate}
\item[(i)] the matrix ensembles ${\mathbf U}_\ell =\bigg\{U\bigg(\dfrac{j}{M}+\dfrac{v_{\ell}}{2M}\bigg)\bigg\}_{j\in Q_M^n}$ $(0\leq\ell\leq 2^n-1)$ are unitary;
\item[(ii)] $U_{j+Mv_{2^\ell}/2}=\sigma_{2^\ell} U_j$ for all $j\in Q_{M/2}^n$;
\item[(iii)] $U_0\in 1\otimes {\mathcal U}(2^n-1)$.
\end{enumerate}
To allow for regularity of the associated scaling function and wavelets, we also impose
\begin{enumerate}
\item[(iv)] $\sum_{j\in Q_M^n}c_{\alpha j}U_j\in{\mathbb C}\otimes{\mathbb C}^{(2^n-1)\times (2^n-1)}$ for $1\leq|\alpha |\leq d$ where $c_{\alpha j}=\sum_{k\in Q_M^n}k^\alpha e^{2\pi i\langle j,k\rangle /M}$.
\end{enumerate}
\end{problem}

\subsection{Wavelet feasibility problem}\label{sec: wavelet feasibility}
Let $M$ be even, $d$ a non-negative integer, and  $\sigma_j$ be  the permutation matrix of (\ref{sigma def nD}).  We define
\begin{equation}
({\mathbb C}^{2^n\times 2^n})^{Q_M^n}_\sigma=\{{\mathbf U}\in ({\mathbb C}^{2^n\times 2^n})^{Q_M^n}:\, U_{j+Mv_{2^\ell} /2}=\sigma_{2^\ell}U_j,\ (j\in Q(M/2)^n,\ 0\leq\ell\leq n-1)\}\label{Csigma def nD}
\end{equation}
to be the collection of $\sigma$-consistent ensembles. 
It is straightforward to verify that $\CsigmaMn$ is a vector space over ${\mathbb C}$ under the usual componentwise operations, and $\CsigmaMn_\sigma$ is a vector subspace. Moreover, by Proposition \ref{prop: U-A consistency} we have
\begin{equation}
{\mathcal F}_M^{-1}\CsigmaMn_\sigma =\{{\mathbf A}\in\CsigmaMn;\ (-1)^{k_\ell}A_k=\sigma_{2^\ell} A_k,\ k\in Q_M^n\}.\label{Fourier CsigmaMn}
\end{equation}
We note that in the Fourier-side description (\ref{Fourier CsigmaMn}) of $\CsigmaMn_\sigma$, the condition applies individually to each of the matrices $A_k$ of the ensemble ${\mathbf A}$ rather than on certain pairs of matrices as in (\ref{Csigma def nD}).

\begin{lemma}\label{lem: off-centre ensembles}
Let $U$ be the trigonometric polynomial $U(\xi )=\sum_{k\in Q_M^n}A_ke^{-2\pi i\langle k,\xi\rangle}$ $(\xi\in{\mathbb R}^n)$ with $\{A_k\}_{k\in Q_M^n}\subset{\mathbb C}^{2^n\times 2^n}$ and ${\mathbf U}$ be the matrix ensemble with $j$-th term $U_j=U(j/M)$ $(j\in Q_M^n)$. Then 
$$U((j+v_\ell /2)/M)=({\mathcal F}_M\chi_\ell{\mathcal F}_M^{-1}{\mathbf U})_j$$
where $(\chi_\ell{\mathbf A})_k=e^{-\pi i\langle k,v_\ell\rangle /M}A_k$ $(k\in Q_M^n)$.
\end{lemma}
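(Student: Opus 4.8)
The plan is to verify the identity by unwinding the three operators on the right-hand side in sequence, starting from the innermost application of ${\mathcal F}_M^{-1}$. First I would apply the inverse discrete Fourier transform to recover the coefficient matrices: by (\ref{FT pair}), $({\mathcal F}_M^{-1}{\mathbf U})_k = A_k$ for $k\in Q_M^n$, since ${\mathbf U}$ and ${\mathbf A}$ form a finite Fourier transform pair. This reduces the task to computing ${\mathcal F}_M(\chi_\ell{\mathbf A})$ and comparing with the claimed sampling $U((j+v_\ell/2)/M)$.

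Next I would apply the multiplier $\chi_\ell$ and then the forward transform ${\mathcal F}_M$. By the definition of $\chi_\ell$ we have $(\chi_\ell{\mathbf A})_k = e^{-\pi i\langle k,v_\ell\rangle/M}A_k$, and then the definition of ${\mathcal F}_M$ gives
\begin{equation*}
({\mathcal F}_M\chi_\ell{\mathcal F}_M^{-1}{\mathbf U})_j=\sum_{k\in Q_M^n}(\chi_\ell{\mathbf A})_ke^{-2\pi i\langle k,j\rangle/M}=\sum_{k\in Q_M^n}A_ke^{-\pi i\langle k,v_\ell\rangle/M}e^{-2\pi i\langle k,j\rangle/M}.
\end{equation*}
The key step is then to combine the two exponential factors into a single exponential evaluated at the shifted node. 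Writing the combined phase as $e^{-2\pi i\langle k,\,(j+v_\ell/2)/M\rangle}$, which follows from $\tfrac{\pi\langle k,v_\ell\rangle}{M}+\tfrac{2\pi\langle k,j\rangle}{M}=2\pi\langle k,(j+v_\ell/2)/M\rangle$, I would recognise the resulting sum as precisely $U$ evaluated at $\xi=(j+v_\ell/2)/M$, using the defining expansion $U(\xi)=\sum_{k\in Q_M^n}A_ke^{-2\pi i\langle k,\xi\rangle}$.

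I do not anticipate a serious obstacle here; this is essentially a bookkeeping verification exploiting the fact that modulating the Fourier coefficients by a pure phase corresponds to shifting the sampling grid, which is the discrete analogue of the classical Fourier shift theorem. The one point requiring care is the factor of $\tfrac{1}{2}$ in $v_\ell/2$ matching the factor $\pi$ (rather than $2\pi$) in the exponent defining $\chi_\ell$, so I would double-check that the phase $e^{-\pi i\langle k,v_\ell\rangle/M}$ contributes exactly the half-integer shift $v_\ell/(2M)$ in the argument of $U$, consistent with the off-centre sampling nodes $(j+v_\ell/2)/M$ appearing in Problem~\ref{matrix construction}(i). Once the exponents are matched, the identity follows immediately.
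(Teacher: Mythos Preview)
Your proposal is correct and follows essentially the same approach as the paper's proof: both unwind the definitions and match the exponents, the only difference being that the paper starts from $U((j+v_\ell/2)/M)$ and works toward $({\mathcal F}_M\chi_\ell{\mathcal F}_M^{-1}{\mathbf U})_j$, whereas you proceed in the reverse direction.
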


\begin{proof} Observe that 
\begin{align*}
U((j+v_\ell /2)/M)&=\sum_{k\in Q_M^n}A_ke^{-2\pi i\langle k,j+v_\ell /2\rangle /M}\\
&=\sum_{k\in Q_M^n}e^{-\pi i\langle k,v_\ell\rangle /M}({\mathcal F}_M^{-1}{\mathbf U})_ke^{-2\pi i\langle j,k\rangle /M}\\
&=\sum_{k\in Q_M^n}(\chi_\ell{\mathcal F}_M^{-1}{\mathbf U})_ke^{-2\pi i\langle k,j\rangle /M}=({\mathcal F}_M\chi_\ell{\mathcal F}_M^{-1}{\mathbf U})_j.
\end{align*}
\end{proof}

\subsubsection{Unitarity}
Lemma \ref{lem: off-centre ensembles} allows us to rephrase Proposition \ref{prop: discrete unitary} as follows:

\begin{proposition}\label{prop: unitary ensembles}
Let $U(\xi )=\sum_{k\in Q_M^n}A_ke^{-2\pi i\langle k,\xi\rangle}$ $(A_k\in{\mathbb C}^{2^n\times 2^n})$ be a trigonometric polynomial, ${\mathbf U}\in\CsigmaMn$ be the matrix ensemble with $j$-th entry $U_j=U(j/M)$ and 
$${\mathbf U}^{(\ell )}={\mathcal F}_M\chi_\ell{\mathcal F}_M^{-1}{\mathbf U}\in\CsigmaMn\quad (0\leq\ell\leq 2^n-1).$$
Then $U(\xi )$ is unitary for all $\xi\in{\mathbb R}^n$ if and only if the matrix ensembles $\{{\mathbf U}^{(\ell )}\}_{\ell =0}^{2^n-1}$ are all unitary.
\end{proposition}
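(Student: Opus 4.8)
The plan is to combine Proposition \ref{prop: discrete unitary} with Lemma \ref{lem: off-centre ensembles}, the bridge between them being the partition of the fine sampling grid
$$Q_{2M}^n/(2M)=\bigcup_{\ell=0}^{2^n-1}\bigl(Q_M^n+v_\ell/2\bigr)/M.$$
First I would establish this set identity carefully. Working coordinate-wise, a point of $\bigl(Q_M^n+v_\ell/2\bigr)/M$ has $i$-th coordinate $(j_i+(v_\ell)_i/2)/M=(2j_i+(v_\ell)_i)/(2M)$ with $j_i\in\{0,\dots,M-1\}$ and $(v_\ell)_i\in\{0,1\}$. As $j_i$ ranges over $\{0,\dots,M-1\}$ and $(v_\ell)_i$ over $\{0,1\}$, the integer $2j_i+(v_\ell)_i$ ranges bijectively over $\{0,1,\dots,2M-1\}$. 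Hence the map $(j,\ell)\mapsto 2j+v_\ell$ is a bijection from $Q_M^n\times\{0,\dots,2^n-1\}$ onto $Q_{2M}^n$, and dividing by $2M$ yields the claimed partition; in particular every point of $Q_{2M}^n/(2M)$ is uniquely of the form $(j+v_\ell/2)/M$.

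Next I would translate unitarity of the ensembles $\{{\mathbf U}^{(\ell)}\}$ into unitarity of samples. By Lemma \ref{lem: off-centre ensembles}, the $j$-th entry of ${\mathbf U}^{(\ell)}={\mathcal F}_M\chi_\ell{\mathcal F}_M^{-1}{\mathbf U}$ is exactly $U((j+v_\ell/2)/M)$. Since an ensemble is unitary precisely when each of its constituent matrices is unitary, the statement that $\{{\mathbf U}^{(\ell)}\}_{\ell=0}^{2^n-1}$ are all unitary is equivalent to the statement that $U((j+v_\ell/2)/M)$ is unitary for every $j\in Q_M^n$ and every $0\le\ell\le 2^n-1$. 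By the bijection of the previous step (which guarantees no sample is missed and none is double-counted), this is in turn equivalent to $U(p)$ being unitary for every $p\in Q_{2M}^n/(2M)$, that is, $U(j/(2M))$ unitary for all $j\in Q_{2M}^n$.

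Finally I would invoke Proposition \ref{prop: discrete unitary}, which asserts that $U(\xi)$ is unitary for all $\xi\in{\mathbb R}^n$ if and only if $U(j/(2M))$ is unitary for all $j\in Q_{2M}^n$. Chaining this equivalence with the one obtained in the second paragraph closes the argument in both directions. The only genuine content is the grid-partition identity of the first paragraph, which I expect to be the main obstacle, though it is still essentially a routine coordinate-wise counting argument; everything after it is bookkeeping, reading the entries of ${\mathbf U}^{(\ell)}$ off Lemma \ref{lem: off-centre ensembles} and unwinding the definition of a unitary ensemble.
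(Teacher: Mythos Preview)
Your proposal is correct and follows exactly the approach the paper intends: the paper introduces Proposition~\ref{prop: unitary ensembles} with the sentence ``Lemma~\ref{lem: off-centre ensembles} allows us to rephrase Proposition~\ref{prop: discrete unitary} as follows,'' and the grid identity $Q_{2M}^n/(2M)=\bigcup_{\ell=0}^{2^n-1}(Q_M^n+v_\ell/2)/M$ you prove is precisely the one the paper states (without proof) immediately after Proposition~\ref{prop: discrete unitary}. Your argument is in fact more detailed than the paper's, since you verify the bijection explicitly rather than asserting it.
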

It's important to note that if ${\mathbf U}\in\CsigmaMn_\sigma$ and $\{U_j\}_{j\in Q_{M/2}^n}$ are unitary, then all entries of ${\mathbf U}$ are unitary since for $j\in Q_{M/2}^n$, we have
\begin{equation*}
(U_{j+Mv_{2^\ell} /2})^*U_{j+Mv_{2^\ell} /2}=(\sigma_{2^\ell} U_j)^*(\sigma_{2^\ell}U_j)=U_j^*\sigma_{2^\ell}^*\sigma_{2^\ell} U_j=U_j^*U_j=I_{2^n}.
\end{equation*}
Therefore, when imposing unitarity on entries of an ensemble ${\mathbf U}\in\CsigmaMn_\sigma$, it is enough to impose unitarity on the sub-ensemble $\{U_j\}_{j\in Q_{M/2}^n}$.

\subsubsection{Consistency}

Given an ensemble ${\mathbf V}\in ({\mathbb C}^{2^n\times 2^n})^{Q_M^n}$, we extend it to a periodic mapping ${\mathbf V}:{\mathbb Z}^n\to ({\mathbb C}^{2^n\times 2^n})^{Q_M^n}$ by declaring
$V_{j+Mp}=V_j$ $(j\in Q_M^n,\ p\in{\mathbb Z}^n)$.
We consider translation operators ${\tau_k}$ ($k\in{\mathbb Z}^n$) acting on $({\mathbb C}^{2^n\times 2^n})^{Q_M^n}$ by
\begin{equation}
(\tau_k{\mathbf V})_j=V_{j+k}\quad (j,k\in{\mathbb Z}^n).\label{tau def}
\end{equation}
If $r$ is an integer with $0\leq r\leq n-1$, we define an operator $T_{2^r}$ on $({\mathbb C}^{2^n\times 2^n})^{Q_M^n}$ by
\begin{equation}
(T_{2^r}{\mathbf V})_j=\frac{2}{M}\sum_{m_r=0}^{M-1}\frac{V_{(j_1,\dots ,j_{r-1},m_r ,j_{r +1},\dots ,j_n)}}{1-e^{2\pi i(j_r-m_r-1/2)/M}}.\label{T_{2^r} def}
\end{equation}
If $\ell\in Y_n$ has binary expansion $\ell =\sum_{r=0}^{n-1}a_r2^r$ $(a_r\in\{0,1\})$ then we define
\begin{equation}
T_\ell=\prod_{r=0}^{n-1}(T_{2^r})^{a_r}.\label{T_l def}
\end{equation}
Since the operators $\{T_{2^r}\}_{r=0}^{n-1}$ commute, the product in (\ref{T_l def}) is well-defined.

\begin{lemma} \label{lem: tau T comm}As operators acting on periodisations of ensembles in $({\mathbb C}^{2^n\times 2^n})^{Q_M^n}$,  $\tau_m$ and $T_\ell$ $(m\in{\mathbb Z}^n,\ \ell\in Y_n)$ commute, i.e., $\tau_mT_\ell =T_\ell\tau_m$.
\end{lemma}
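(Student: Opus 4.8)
The plan is to reduce the claim to a single-coordinate computation and then settle it by reindexing the defining sum. Since the translation group is generated by the elementary shifts $\tau_{e_s}$ (and their inverses), where $e_s$ is the $s$-th standard basis vector, and since $T_\ell=\prod_{r=0}^{n-1}(T_{2^r})^{a_r}$ by (\ref{T_l def}), commutation with a product follows from commutation with each factor. Hence it suffices to prove $\tau_{e_s}T_{2^r}=T_{2^r}\tau_{e_s}$ for every pair $(s,r)$; this then propagates to arbitrary powers and products to yield $\tau_mT_\ell=T_\ell\tau_m$. I would split into the cases $s\neq r$ and $s=r$.

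First I would dispose of the case $s\neq r$. The operator $T_{2^r}$ acts only through its distinguished ($r$-th) coordinate: both the averaged index $m_r$ and the denominator $1-e^{2\pi i(j_r-m_r-1/2)/M}$ involve only $j_r$ and $m_r$, while $\tau_{e_s}$ shifts only the $s$-th coordinate. Applying $\tau_{e_s}$ before or after $T_{2^r}$ leaves the $r$-th coordinate untouched, so the denominator is unchanged, and in either order it merely advances $j_s$ by one inside the numerator $V_{(\dots)}$. Reading off both compositions from (\ref{tau def}) and the defining formula for $T_{2^r}$ shows they agree term by term.

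The crux is the case $s=r$, where both operators act on the same coordinate. Writing out $(T_{2^r}\tau_{e_r}\mathbf V)_j$ places $V_{(\dots,m_r+1,\dots)}$ in the numerator against the denominator $1-e^{2\pi i(j_r-m_r-1/2)/M}$, whereas $(\tau_{e_r}T_{2^r}\mathbf V)_j$ evaluates $T_{2^r}\mathbf V$ at $j+e_r$, replacing $j_r$ by $j_r+1$ throughout. The idea is to substitute $p=m_r+1$ in the first expression: this turns its numerator into $V_{(\dots,p,\dots)}$ and, because the denominator depends on the difference $j_r-m_r$, turns its denominator into $1-e^{2\pi i((j_r+1)-p-1/2)/M}$, matching the second expression exactly. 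The only residual discrepancy is that $p$ now runs over $1,\dots,M$ rather than $0,\dots,M-1$; here I would invoke the $M$-periodicity of the periodized ensemble ($V_{j+Mp'}=V_j$) together with the evident $M$-periodicity of $p\mapsto e^{2\pi i((j_r+1)-p-1/2)/M}$ to identify the $p=M$ term with the $p=0$ term, thereby restoring the range $0,\dots,M-1$ and completing the termwise match.

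The main obstacle is the bookkeeping in this last case: one must confirm that shifting the summation index produces precisely the $j_r\mapsto j_r+1$ replacement in the denominator (which is exactly why the denominator is written as a function of $j_r-m_r-\tfrac{1}{2}$), and that the boundary term created by the reindexing is absorbed by periodicity rather than contributing a stray correction. Everything else is routine verification. The same disjoint-coordinate reasoning used for $s\neq r$ also underlies the earlier assertion that the $\{T_{2^r}\}_{r=0}^{n-1}$ commute, so that the product in (\ref{T_l def}) is well-defined.
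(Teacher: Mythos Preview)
Your argument is correct and follows essentially the same route as the paper: reduce to the elementary generators, treat the off-diagonal case $s\neq r$ by noting that $T_{2^r}$ touches only the $r$-th coordinate, and handle the diagonal case $s=r$ by reindexing the sum (using that the kernel depends only on the difference $j_r-m_r$) together with $M$-periodicity of the periodised ensemble. The only cosmetic difference is that the paper works directly with shifts $\tau_{sv_{2^p}}$ for arbitrary $s\in\mathbb{Z}$ rather than first proving the case $s=1$ and then passing to powers; your explicit remark about the boundary term $p=M$ being absorbed by periodicity is a point the paper leaves implicit.
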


\begin{proof}
If $0\leq r\neq p\leq n-1$ and $s\in{\mathbb Z}$, then  
$$(\tau_{sv_{2^p}}T_{2^r}{\mathbf V})_j=\frac{2}{M}\sum_{m_r=0}^{M-1}\frac{V_{(j_1,\dots ,j_{r-1},m_r ,j_{r +1},\dots , j_{p}+s,\dots ,j_n)}}{1-e^{-2\pi i(j_r -m_r +1/2)/M}}=(T_{2^r}\tau_{sv_{2^p}}{\mathbf V})_j$$
while if $0\leq r=p\leq n-1$ and $s\in{\mathbb Z}$,
\begin{align*}
(\tau_{sv_{2^r}}T_{2^r}{\mathbf V})_j&=\frac{2}{M}\sum_{m_r=0}^{M-1}\frac{V_{(j_1,\dots ,j_{r-1},m_r ,j_{r +1},\dots ,j_n)}}{1-e^{-2\pi i(j_r+s-m_r+1/2)/M}}\\
&=\frac{2}{M}\sum_{m_r=0}^{M-1}\frac{V_{(j_1,\dots ,j_{r-1},m_r+s ,j_{r +1},\dots ,j_n)}}{1-e^{-2\pi i(j_r-m_r+1/2)/M}}=(T_{2^r}\tau_{sv_{2^r}}{\mathbf V})_j.
\end{align*}
We conclude that $\tau_{sv_{2^p}}T_{2^r}=T_{2^r}\tau_{sv_{2^p}}$ for all $0\leq p,r\leq n-1$. Hence, if $\ell\in Y_n$ and $T_\ell$ is defined as in (\ref{T_l def}), we have
$$\tau_{sv_{2^p}}T_\ell=\tau_{sv_{2^p}}\prod_{r=0}^{n-1}(T_{2^r})^{a_r}=\prod_{r=0}^{n-1}(T_{2^r})^{a_r}\tau_{sv_{2^p}}=T_\ell\tau_{sv_{2^p}}.$$
Finally, if $m=\sum_{p=0}^{n-1}s_pv_{2^p}\in{\mathbb Z}^n$ then
$$\tau_mT_\ell=\prod_{p=0}^{n-1}\tau_{s_pv_{2^p}}T_\ell=T_\ell\prod_{p=0}^{n-1}\tau_{s_pv_{2^p}}=T_\ell\tau_m.$$
\end{proof}

\begin{proposition} Suppose ${\mathbf U}\in\CsigmaMn_\sigma$ and ${\mathbf U}^{(\ell )}={\mathcal F}_M\chi_\ell{\mathcal F}_M^{-1}{\mathbf U}$ for some $0\leq\ell\leq 2^n-1$. Then ${\mathbf U}^{(\ell )}$ satisfies the consistency condition, i.e., ${\mathbf U}^{(\ell )}\in\CsigmaMn_\sigma$.
\end{proposition}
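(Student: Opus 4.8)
The plan is to reduce the statement to the Fourier-side description of $\sigma$-consistency recorded in (\ref{Fourier CsigmaMn}), where the operator $\chi_\ell$ acts transparently. First I would set $\mathbf{A}=\mathcal{F}_M^{-1}\mathbf{U}$, so that by definition $\mathbf{U}^{(\ell)}=\mathcal{F}_M(\chi_\ell\mathbf{A})$ and, since $\mathbf{U}\in\CsigmaMn_\sigma$, identity (\ref{Fourier CsigmaMn}) tells us that $\sigma_{2^r}A_k=(-1)^{k_r}A_k$ for every $k\in Q_M^n$ and every $0\le r\le n-1$. Because $\mathcal{F}_M$ is a bijection of $\CsigmaMn$ onto itself, proving $\mathbf{U}^{(\ell)}\in\CsigmaMn_\sigma$ is equivalent, again by (\ref{Fourier CsigmaMn}), to proving that the coefficient ensemble $\chi_\ell\mathbf{A}$ satisfies the same index-by-index relation.

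The key observation is that $\chi_\ell$ multiplies the $k$-th coefficient by the scalar $e^{-\pi i\langle k,v_\ell\rangle/M}$, and a scalar commutes both with the fixed permutation matrix $\sigma_{2^r}$ and with the sign $(-1)^{k_r}$. Writing $B_k:=(\chi_\ell\mathbf{A})_k=e^{-\pi i\langle k,v_\ell\rangle/M}A_k$, this gives
\begin{equation*}
\sigma_{2^r}B_k=e^{-\pi i\langle k,v_\ell\rangle/M}\,\sigma_{2^r}A_k=e^{-\pi i\langle k,v_\ell\rangle/M}(-1)^{k_r}A_k=(-1)^{k_r}B_k
\end{equation*}
for all $k\in Q_M^n$ and all $0\le r\le n-1$. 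Reading (\ref{Fourier CsigmaMn}) in the reverse direction, this is exactly the statement $\mathbf{U}^{(\ell)}=\mathcal{F}_M(\chi_\ell\mathbf{A})\in\CsigmaMn_\sigma$, completing the argument.

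As a cross-check, one can also argue entirely on the sampling side. Proposition \ref{prop: U-A consistency} turns the hypothesis into the continuous consistency $U(\xi+v_{2^r}/2)=\sigma_{2^r}U(\xi)$ for the trigonometric polynomial $U$ with coefficients $\{A_k\}$, while Lemma \ref{lem: off-centre ensembles} identifies the entries of $\mathbf{U}^{(\ell)}$ as $(\mathbf{U}^{(\ell)})_j=U((j+v_\ell/2)/M)$. Substituting $j+Mv_{2^r}/2$ for the index and dividing by $M$ produces a shift of the continuous argument by exactly $v_{2^r}/2$, so that
\begin{equation*}
(\mathbf{U}^{(\ell)})_{j+Mv_{2^r}/2}=U\Bigl(\tfrac{j+v_\ell/2}{M}+\tfrac{v_{2^r}}{2}\Bigr)=\sigma_{2^r}\,U\Bigl(\tfrac{j+v_\ell/2}{M}\Bigr)=\sigma_{2^r}(\mathbf{U}^{(\ell)})_j,
\end{equation*}
which is the defining condition (\ref{Csigma def nD}) of $\CsigmaMn_\sigma$.

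I do not expect a genuine obstacle: the content is purely bookkeeping, and each of the two routes is a single short computation. The only point demanding mild care is the index arithmetic of the second route---verifying that translating the ensemble index by $Mv_{2^r}/2$ corresponds, after the division by $M$ in Lemma \ref{lem: off-centre ensembles}, to translating the continuous variable by precisely $v_{2^r}/2$, and that the periodic extension of the ensemble is compatible with this. The Fourier-side route avoids even this, since there the consistency condition is imposed separately on each $A_k$ and is manifestly unchanged by the scalar weights introduced by $\chi_\ell$; for that reason I would make it the main line of proof.
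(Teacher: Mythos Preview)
Your proposal is correct, and both routes you sketch are valid. They are, however, genuinely different from the paper's own argument, and in fact cleaner.

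The paper proceeds by first computing $({\mathcal F}_M\chi_{2^r}{\mathcal F}_M^{-1}{\mathbf U})_j$ explicitly as a discrete convolution, identifying it with the operator $T_{2^r}$ of (\ref{T_{2^r} def}), and then building $T_\ell=\prod_r (T_{2^r})^{a_r}$ for general $\ell$. The consistency of ${\mathbf U}^{(\ell)}$ is then deduced from Lemma~\ref{lem: tau T comm}, which establishes that $T_\ell$ commutes with the translations $\tau_m$; this commutation, applied with $m=Mv_{2^p}/2$, converts the consistency of ${\mathbf U}$ into that of $T_\ell{\mathbf U}$. So the paper's route is: introduce $T_\ell$, prove a commutation lemma for it, and use that.

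Your Fourier-side route bypasses $T_\ell$ entirely. By passing to $\mathbf{A}=\mathcal{F}_M^{-1}\mathbf{U}$ and invoking the pointwise characterisation (\ref{Fourier CsigmaMn}), the consistency condition becomes an eigenvalue relation $\sigma_{2^r}A_k=(-1)^{k_r}A_k$ on each individual coefficient, which is trivially preserved under scalar multiplication by $e^{-\pi i\langle k,v_\ell\rangle/M}$. This is a one-line proof once (\ref{Fourier CsigmaMn}) is available, and it makes Lemma~\ref{lem: tau T comm} unnecessary for this proposition. Your second route, via Lemma~\ref{lem: off-centre ensembles} and the continuous consistency of $U$, is morally the same as the paper's but avoids naming $T_\ell$ and its commutation lemma explicitly; it trades the operator algebra for a direct evaluation of the trigonometric polynomial at a shifted point. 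What the paper's approach buys is that the operators $T_\ell$ are made explicit (useful elsewhere), but for the present statement your first route is strictly shorter.
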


\begin{proof} If $U_j^{(\ell )}$ is the $j$-th component of ${\mathbf U}^{(\ell )}$, then with $j=(j_1,\dots ,j_n)\in Q_M^n$, $U_j=U_{(j_1,\dots ,j_n)}$ and $0\leq r\leq n-1$,
\begin{align}
U^{(2^r)}_j&=({\mathcal F}_M^{-1}\chi_{2^r}{\mathcal F}_M{\mathbf U})_j\notag\\
&=\frac{1}{M^n}\sum_{k\in Q_M^n}\sum_{m\in Q_M^n}U_me^{-2\pi i\langle m,k\rangle /M}e^{2\pi i\langle k,j\rangle /M}e^{-\pi ik_r /M}\notag\\
&=\frac{1}{M^n}\sum_{m\in Q_M^n}U_m\sum_{k_r=0}^{M-1}e^{2\pi ik_r(j_r-m_r-1/2)/M}\prod_{\substack{p=1\\p\neq r}}^n\sum_{k_p=0}^{M-1}e^{2\pi ik_p(j_p-m_p)/M}\notag\\
&=\frac{1}{M}\sum_{m\in Q_M^n}U_m\delta_{j_1-m_1}\cdots\delta_{j_{r-1}-m_{r-1}}\delta_{j_{r+1}-m_{r+1}}\dots\delta_{j_n-m_n}\frac{-2}{e^{2\pi i(j_r-m_r-1/2)/M}-1}\notag\\
&=\frac{2}{M}\sum_{m_r =0}^{M-1}\frac{U_{(j_1,\dots ,j_{r -1},m_r ,j_{r +1},\dots ,j_n)}}{1-e^{2\pi i(j_r -m_r-1/2)/M}}=(T_{2^r}{\mathbf U})_j,\label{Uli formula}
\end{align}
i.e., ${\mathbf U}^{(2^r)}=T_{2^r}{\mathbf U}$ with $T_{2^r}$ the convolution operator defined in (\ref{T_{2^r} def}). If $\ell =\sum_{r=0}^{n-1}a_r2^r$ $(a_r\in\{0,1\})$, then $T_\ell$ is defined as in (\ref{T_l def}) and by (\ref{Uli formula}) we have
\begin{align*}
{\mathbf U}^{(\ell)}={\mathcal F}_M\chi_\ell{\mathcal F}_M^{-1}{\mathbf U}
&={\mathcal F}_M\prod_{r=0}^{n-1}(\chi_{2^r})^{a_r}{\mathcal F}_M^{-1}{\mathbf U}\\
&=\prod_{r=0}^{n-1}({\mathcal F}_M\chi_{2^r}{\mathcal F}_M^{-1})^{a_r}{\mathbf U}=\prod_{r=0}^{n-1}(T_{2^r})^{a_r}{\mathbf U}=T_\ell{\mathbf U}.\end{align*}
Hence, if ${\mathbf U}\in({\mathbb C}^{2^n\times 2^n})^{Q_M^n}_\sigma$ is a consistent ensemble, $\ell\in Y_n$ and $0\leq p\leq n-1$, Lemma \ref{lem: tau T comm} gives
$$U^{(\ell )}_{j+Mv_{2^p}/2}=(\tau_{Mv_{2^p}/2}T^\ell{\mathbf U})_j=(T^\ell \tau_{Mv_{2^p}/2}{\mathbf U})_j=(T^\ell\sigma_{2^p}{\mathbf U})_j=\sigma_{2^p}(T^\ell{\mathbf U})_j=\sigma_{2^p}U^{(\ell )}_j,$$
i.e., ${\mathbf U}^{(\ell )}$ is a consistent ensemble.
\end{proof}

We close this section with the discretised version of the wavelet construction problem:

\begin{problem}\label{prob:discrete wavelet const prob}
Given an even integer $M\geq 4$, we seek a matrix ensemble ${\mathbf U}\in({\mathbb C}^{2^n\times 2^n})^{Q_M^n}_\sigma$ such that 
\begin{enumerate}
\item[(i)] ${\mathbf U}^{(\ell )}=\{{\mathcal F}_M\chi_\ell{\mathcal F}_M^{-1}{\mathbf U}\}_{\ell =1}^{2^n-1}$ are unitary ensembles;
\item[(ii)] $U_0\in 1\otimes {\mathcal U}(2^n-1)$.
\end{enumerate}
To allow for regularity of the associated scaling function and wavelets, we also impose
\begin{enumerate}
\item[(iii)] $\sum_{j\in Q_M^n}c_{\alpha j}U_j\in{\mathbb C}\otimes{\mathbb C}^{(2^n-1)\times (2^n-1)}$ for $1\leq|\alpha |\leq d$ where $c_{\alpha j}=\sum_{k\in Q_M^n}k^\alpha e^{2\pi i\langle j,k\rangle /M}$.
\end{enumerate}
\end{problem}

\section{Projection algorithms}  \label{sec: opt prelim}
In this section we give the background required to solve Problem \ref{prob:discrete wavelet const prob} with techniques borrowed from optimisation.
\subsection{Projection operators}
Let ${\mathcal H}$ be a finite-dimensional Hilbert space. Given a set $S\subseteq{\mathcal H}$, its \emph{(metric) projector} is the set-valued operator given by 
  $$ P_S(x) := \left\{s\in S:\|s-x\|\leq d(x,S)\right\}\quad (x\in{\mathcal H}) $$
where $d(x,S)=\inf_{s\in S}\|x-s\|$. It is straightforward to check that $P_S(x)\neq\emptyset$ for all $x\in{\mathcal H}$ so long as $S$ is nonempty and closed. In a common abuse of notation, we write $P_S(x)=p$ to mean $P_S(x)=\{p\}$.

\begin{proposition}[Properties of projectors]\label{prop:proj properties} Let ${\mathcal H}$ be a finite dimensional Hilbert space.
\begin{enumerate}[(a)]
  \item\label{it:proj product} Let $C_1,C_2,\dots,C_{m}\subseteq{\mathcal H}$ be nonempty closed sets and define $C:=C_1\times\dots\times C_{m}\subseteq{\mathcal H}^m$. Then $$P_C=P_{C_1}\times\dots\times P_{C_{m}}. $$
  \item\label{it:proj isometry} Let $L:{\mathcal H}\to{\mathcal H}$ be an isometric isomorphism and $C\subseteq{\mathcal H}$ be a nonempty closed set. Then $$ P_{L(C)} = L\circ P_C\circ L^{-1}.$$
\end{enumerate}
\end{proposition}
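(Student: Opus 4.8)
The plan is to treat the two parts separately, and in each case to reduce the asserted identity of set-valued operators to a pointwise statement: two projectors coincide precisely when they return the same subset of $\mathcal{H}$ for every input. The underlying principle in both parts is that the metric projector onto a nonempty closed set is characterised purely by minimisation of the Hilbert-space distance, so everything follows by tracking how that distance behaves under the relevant structure (a Cartesian product in (\ref{it:proj product}), an isometry in (\ref{it:proj isometry})).

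For part (\ref{it:proj product}), the key observation is that the norm on the product space $\mathcal{H}^m$ splits as $\|(x_1,\dots,x_m)\|^2=\sum_{i=1}^m\|x_i\|^2$. Fixing $x=(x_1,\dots,x_m)\in\mathcal{H}^m$, I would use that minimising over a Cartesian product decouples across coordinates, since the objective is a sum of nonnegative terms each depending on a single coordinate and there is no coupling between the feasible sets:
\[
d(x,C)^2=\inf_{c\in C}\sum_{i=1}^m\|x_i-c_i\|^2=\sum_{i=1}^m\inf_{c_i\in C_i}\|x_i-c_i\|^2=\sum_{i=1}^m d(x_i,C_i)^2.
\]
From this it follows that a point $s=(s_1,\dots,s_m)\in C$ attains the infimum (that is, $s\in P_C(x)$) if and only if each coordinate $s_i$ attains the corresponding coordinate infimum (that is, $s_i\in P_{C_i}(x_i)$ for every $i$). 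This is exactly the identity $P_C(x)=P_{C_1}(x_1)\times\dots\times P_{C_m}(x_m)$, which is the claim.

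For part (\ref{it:proj isometry}), I would fix $y\in\mathcal{H}$ and write an arbitrary element of $L(C)$ as $Lc$ with $c\in C$. Using that $L$ is a linear, bijective, norm-preserving map, I compute $\|Lc-y\|=\|L(c-L^{-1}y)\|=\|c-L^{-1}y\|$, whence the distances match:
\[
d\bigl(y,L(C)\bigr)=\inf_{c\in C}\|Lc-y\|=\inf_{c\in C}\|c-L^{-1}y\|=d\bigl(L^{-1}y,C\bigr).
\]
Consequently $Lc\in P_{L(C)}(y)$ if and only if $\|c-L^{-1}y\|\le d(L^{-1}y,C)$, i.e.\ if and only if $c\in P_C(L^{-1}y)$, and rearranging gives $P_{L(C)}(y)=L\bigl(P_C(L^{-1}y)\bigr)=(L\circ P_C\circ L^{-1})(y)$. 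I do not expect a genuine obstacle in either part: both are elementary consequences of the Hilbert-space norm structure, with part (\ref{it:proj isometry}) additionally exploiting linearity and norm preservation of $L$. The only point requiring mild care is the bookkeeping inherent in manipulating set-valued operators and justifying the separation of the infimum over a product, which I have addressed above.
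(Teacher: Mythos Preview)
Your proposal is correct and follows essentially the same approach as the paper. For part~(\ref{it:proj product}) the paper simply says it ``follows easily from the definition,'' and your decoupling argument is exactly the routine verification one fills in; for part~(\ref{it:proj isometry}) the paper argues the two set inclusions separately whereas you package them as a single biconditional, but the underlying computation---reducing to $d(y,L(C))=d(L^{-1}y,C)$ via the isometry and then tracking where the infimum is attained---is identical.
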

\begin{proof}
\eqref{it:proj product}:~Follows easily from the definition.

\noindent \eqref{it:proj isometry}:~Let $x\in{\mathcal H}$. First note that since $L$ is an isometric isomorphism, we have $d(x,L(C)) = d(L^{-1}x,C)$.
On one hand, if $p\in P_{L(C)}(x)$, then $L^{-1}p\in C$ and 
  $$ d(L^{-1}x,C)=d(x,L(C)) = \|x-p\| = \|L^{-1}x-L^{-1}c\|. $$
This implies that $L^{-1}p\in P_C(L^{-1}x)$ or, equivalently, that $p\in (L\circ P_C\circ L^{-1})(x)$. On the other hand, if $p\in (L\circ P_C\circ L^{-1})(x)$, then there exists $c\in P_C(L^{-1}x)$ such that $p=Lc$ and
  $$ d(x,L(C))=d(L^{-1}x,C)=\|L^{-1}x-c\|=\|x-Lc\|=\|x-p\|,$$
which implies that $p\in P_{L(C)}(x)$. This completes the proof.
\end{proof}

In what follows, the unit sphere is denoted $\mathbb{S}:=\{x\in{\mathcal H}:\|x\|=1\}$. We recall that the singular value decomposition (SVD) of a matrix $A\in{\mathbb C}^{N\times N}$ is of the form $A=U\Sigma V^*$ where $U,V\in {\mathcal U}(N)$ and $\Sigma\in {\mathbb C}^{N\times N}$ is a diagonal matrix with the diagonal entries (the {\it singular values} of $A$) being the eigenvalues of $\sqrt{A^*A}$.

\begin{proposition}[Examples of projectors]\label{prop:proj examples} Let ${\mathcal H},{\mathcal H}'$ be finite dimensional Hilbert spaces.
\begin{enumerate}[(a)]
  \item\label{it:proj linear equalities} Let $L:{\mathcal H}\to{\mathcal H}'$ be linear and denote $C:=\{x\in{\mathcal H}:Lx=0\}$. If $LL^*$ is invertible, then $$ P_C(x)=x-L^*(LL^*)^{-1}(Lx)\quad \forall x\in{\mathcal H}. $$
  \item\label{it:proj sphere} Let $x\in{\mathcal H}$. Then 
    $P_{\mathbb{S}}(x) = \begin{cases} 
                            \frac{x}{\|x\|} & x\neq 0, \\
                             \mathbb{S}     & x = 0.\\
                         \end{cases}$
  \item\label{it:proj unitary} Let $X\in{\mathbb C}^{N\times N}$. Then 
     $P_{{\mathcal U}(N)}(X)=\{UV^*:X=U\Sigma V^*\text{ is an SVD}\}.$
	\end{enumerate}
\end{proposition}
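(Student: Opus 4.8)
The plan is to handle the three parts independently, since each reduces to a standard nearest-point computation; the only genuinely delicate point is the set-valued description in part~(c).

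For part~(a), I would first note that $C=\ker L$ is a closed linear subspace, so $P_C$ is single-valued and the projection of $x$ is characterised by the two conditions $p\in C$ and $x-p\in C^\perp$. Setting $p:=x-L^*(LL^*)^{-1}(Lx)$, a direct computation gives $Lp=Lx-LL^*(LL^*)^{-1}Lx=0$, so $p\in C$; and since $(\ker L)^\perp=\operatorname{ran}L^*$, the residual $x-p=L^*(LL^*)^{-1}(Lx)$ manifestly lies in $C^\perp$. The invertibility of $LL^*$ is exactly what makes $p$ well-defined and forces $Lp=0$. Pythagoras then yields $\|x-c\|^2=\|x-p\|^2+\|p-c\|^2\ge\|x-p\|^2$ for every $c\in C$, confirming $p=P_C(x)$. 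For part~(b), I would expand, for $s\in\mathbb{S}$,
\[
\|x-s\|^2=\|x\|^2-2\operatorname{Re}\langle x,s\rangle+1,
\]
so minimising over $\mathbb{S}$ is equivalent to maximising $\operatorname{Re}\langle x,s\rangle$. By Cauchy--Schwarz this is at most $\|x\|\,\|s\|=\|x\|$, with equality (when $x\neq 0$) precisely at $s=x/\|x\|$. When $x=0$ the distance to every point of $\mathbb{S}$ equals $1$, so the whole sphere is the projection.

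The substantive part is~(c), where $\mathcal{H}={\mathbb C}^{N\times N}$ carries the Frobenius inner product. Here I would exploit that $\|W\|_2^2=\operatorname{tr}(WW^*)=N$ is constant on ${\mathcal U}(N)$, so that
\[
\|X-W\|_2^2=\|X\|_2^2+N-2\operatorname{Re}\operatorname{tr}(XW^*),
\]
and minimising $\|X-W\|_2$ reduces to maximising $\operatorname{Re}\operatorname{tr}(XW^*)$ over $W\in{\mathcal U}(N)$ (a minimiser exists since ${\mathcal U}(N)$ is compact). Substituting an SVD $X=U\Sigma V^*$ and using cyclicity of the trace, $\operatorname{tr}(XW^*)=\operatorname{tr}(\Sigma Z)$ with $Z:=V^*W^*U$, and $Z$ ranges over all of ${\mathcal U}(N)$ as $W$ does. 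Writing $\Sigma=\diag(\sigma_1,\dots,\sigma_N)$ with $\sigma_i\ge 0$, I would bound
\[
\operatorname{Re}\operatorname{tr}(\Sigma Z)=\sum_{i=1}^N\sigma_i\operatorname{Re}z_{ii}\le\sum_{i=1}^N\sigma_i|z_{ii}|\le\sum_{i=1}^N\sigma_i,
\]
using $|z_{ii}|\le 1$ because the columns of a unitary matrix are unit vectors. Taking $Z=I$, that is $W=UV^*$, attains the bound, so $UV^*$ is a minimiser.

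The hard part will be pinning down the \emph{full} solution set and reconciling it with the non-uniqueness of the SVD. From the equality analysis, any optimal $W$ forces $z_{ii}=1$ for every index $i$ with $\sigma_i>0$; since each such column of the unitary $Z$ has unit norm, $z_{ii}=1$ forces the remaining entries of that column, and (by orthonormality of rows) of that row, to vanish. Hence $Z$ is the identity on the block where $\Sigma$ is invertible and an arbitrary unitary $Z'$ on $\ker\Sigma$. Unwinding $W=UZ^*V^*$ then gives $W=U'V^*$ with $U':=U\,(I\oplus (Z')^*)$, and since $\Sigma$ vanishes on the $Z'$-block one has $U'\Sigma=U\Sigma$, so $X=U'\Sigma V^*$ is again a valid SVD. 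This exhibits every optimiser as $UV^*$ for some SVD; repeated \emph{nonzero} singular values contribute no extra freedom, because any block rotation $Q$ commuting with $\Sigma$ cancels in $UQ(VQ)^*=UV^*$, while zero singular values supply precisely the freedom in the left singular vectors over $\ker X$. The reverse inclusion is immediate, since for \emph{any} SVD $X=U\Sigma V^*$ one computes $\operatorname{Re}\operatorname{tr}\big(X(UV^*)^*\big)=\operatorname{Re}\operatorname{tr}(\Sigma)=\sum_i\sigma_i$, the optimal value. Combining the two inclusions establishes $P_{{\mathcal U}(N)}(X)=\{UV^*:X=U\Sigma V^*\text{ is an SVD}\}$ exactly.
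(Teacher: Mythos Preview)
Your arguments for all three parts are correct. For part~(c) in particular, the reduction to maximising $\operatorname{Re}\operatorname{tr}(\Sigma Z)$ over unitary $Z$ and the subsequent equality analysis on the diagonal entries of $Z$ are carried out carefully, and your handling of the set-valued case (showing that the freedom in the kernel of $\Sigma$ is absorbed into the non-uniqueness of the left singular vectors, while freedom in repeated nonzero singular values cancels in $UV^*$) is the right way to match the projection set with the SVD description.

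By way of comparison, the paper does not prove this proposition from scratch: part~(a) is referred to Bauschke--Combettes \cite[Example~28.14(iii)]{BauCom}, part~(b) is dismissed as immediate, and part~(c) is referred to Higham \cite[Theorems~8.1~\&~8.6]{Higham}. Your approach is therefore genuinely more self-contained. What you gain is that the reader sees exactly why invertibility of $LL^*$ is the right hypothesis in~(a), and in~(c) sees explicitly how the ambiguity in the polar/SVD factorisation corresponds to the possible multi-valuedness of the metric projector. What the paper's approach buys is brevity and a pointer to the broader context (Higham's treatment covers the polar decomposition and best unitary approximation in greater generality, including perturbation bounds). Either route is entirely adequate for the purposes of the proposition.
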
  

\begin{proof}
	\eqref{it:proj linear equalities}:~See \cite[Example~28.14(iii)]{BauCom}.
	
	\noindent \eqref{it:proj sphere}:~Follows easily from the definitions.
	
	\noindent \eqref{it:proj unitary}:~See \cite[Theorems~8.1~\&~8.6]{Higham}.
\end{proof}

We note that if $\sigma\in{\mathcal U}(N)$ and $X\in{\mathbb C}^{N\times N}$ then 
\begin{equation}
P_{{\mathcal U}(N)}(\sigma X)=\sigma P_{{\mathcal U}(N)}(X).\label{unitary proj mult comm}
\end{equation}

\subsection{Projection Algorithms and Feasibility Problems}\label{sec: projection algorithms}
Given finitely many closed sets $C_1,\dots,C_m\subseteq{\mathcal H}$ (a finite-dimensional Hilbert space) with nonempty intersection, the corresponding \emph{feasibility problem} is
  \begin{equation}
  \label{eq:feasibility problem}
    \text{find~}x\in \bigcap_{k=1}^mC_k.
  \end{equation}   
\emph{Projection algorithms} are a family of iterative algorithms which can be used to solve \eqref{eq:feasibility problem} by in each step utilising only projectors onto the individual sets (rather than the entire intersection at once). The two most important examples of projection algorithms are the \emph{method of cyclic projections} \cite{breg} and the \emph{Douglas--Rachford (DR) method} \cite{LionMercier,BauComLuk}, as well as their variants \cite{BorTam,AraCam}. 

In this work we employ the \emph{Douglas--Rachford method} which can be compactly described as the following fixed point iteration: Given $x_0\in{\mathcal H}$, choose any sequence $(x_k)$ satisfying
  \begin{equation}
  \label{eq:DR}
  x_{k+1} \in T(x_k) \text{ where }T:=\frac{I+R_CR_D}{2},
  \end{equation}
and $R_A:=2P_A-I$ denotes \emph{reflector} with respect to a set $A$. Here we note that the sequence $(x_k)$ is only required to satisfy the inclusion in (\ref{eq:DR}) since, in general, the operator $T:{\mathcal H}\to2^{{\mathcal H}}$ is a point-to-set mapping.

When applying a method based on \eqref{eq:DR}, the sequence of interest (\emph{i.e.,} the one that solves \eqref{eq:feasibility problem}) is not $(x_k)$ itself, but one of its projections onto the set $D$. For this reason, it is convenient to implement the Douglas--Rachford algorithm as outlined in Algorithm~\ref{alg:DR} and, in order to be concrete, we state a general convergence result for the convex setting in Theorem~\ref{th:DR convex}.

\begin{figure}[htb]
\begin{algorithm}[H]
\caption{Implementation of the Douglas--Rachford algorithm.\label{alg:DR}}	
\textbf{Input:~}$x_0\in{\mathcal H}$\;
Set $k:=0$ and choose any $p_0\in P_D(x_0)$\;
\While{stopping criteria not satisfied}{
Choose any point $x_{k+1}$ satisfying
\begin{align*}
  x_{k+1} &\in x_k+P_C(2p_k-x_k)-p_k; \\
\intertext{Choose any point $p_{k+1}$ satisfying}  
  p_{k+1} &\in P_D(x_{k+1});
\end{align*}
Set $k:=k+1$\;
}
\textbf{Return:~}$p_k$\;
\end{algorithm}
\end{figure}

Although Algorithm~\ref{alg:DR} applies to problem \eqref{eq:feasibility problem} with $n=2$, the general problem \eqref{eq:feasibility problem} can always be cast as a two set problem via the following product space formulation. Let $C$, $D$ be subsets of ${\mathcal H}^m$ given by 
  \begin{align*}
    C := C_1\times C_2\times\dots\times C_m, \qquad 
    D := \{(x,x,\dots,x)\in{\mathcal H}^m:x\in{\mathcal H}\}. 
  \end{align*}
Then the following equivalence holds:
  $$ x\in\bigcap_{k=1}^mC_k   \iff (x,x,\dots,x)\in C\cap D. $$
From here onwards, when speaking of applying the Douglas--Rachford algorithm to a feasibility problem, we will always mean its product space reformulation.
\begin{theorem}[Behaviour of the DR algorithm {\cite[Theorem~3.13]{BauComLuk}}]\label{th:DR convex}
	Suppose $C,D\subseteq{\mathcal H}$ are closed and convex with nonempty intersection. Let $x_0\in{\mathcal H}$ and set $x_{k+1}=T(x_{k})$ for all $k\in\mathbb{N}$. Then  the sequence $(x_k)$ converges to a point $x\in\Fix T:=\{x:Tx=x\}$ and, moreover, $P_D(x)\in C\cap D$.
\end{theorem}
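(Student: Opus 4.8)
The plan is to treat $T=\frac{I+R_CR_D}{2}$ as an averaged nonexpansive operator and invoke Krasnosel'skii--Mann fixed point iteration theory. First I would record that, since $C$ and $D$ are nonempty closed convex subsets of the Hilbert space ${\mathcal H}$, their projectors $P_C,P_D$ are firmly nonexpansive; consequently the reflectors $R_C=2P_C-I$ and $R_D=2P_D-I$ are nonexpansive. The composition $R_CR_D$ is therefore nonexpansive, and $T$, being the average of $R_CR_D$ with the identity, is firmly nonexpansive (equivalently, $\tfrac12$-averaged). Note also that convexity makes every projection single-valued, so the iteration $x_{k+1}=T(x_k)$ is well defined.

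The crux is to show that $\Fix T\neq\emptyset$ and to pin down the relationship between fixed points and $C\cap D$. I would establish that $P_C(R_Dx)=P_Dx$ whenever $x\in\Fix T$: writing $y=P_Dx$, the fixed point equation $R_CR_Dx=x$ combined with the identity $P_Cz=\tfrac12(z+R_Cz)$ (applied to $z=R_Dx=2y-x$) yields $P_C(2y-x)=\tfrac12((2y-x)+x)=y$. Since $y=P_Dx\in D$ and simultaneously $y=P_C(2y-x)\in C$, this shows $P_Dx\in C\cap D$. Conversely, any $z\in C\cap D$ is fixed by both $P_C$ and $P_D$, hence by $R_CR_D$ and so by $T$; thus $C\cap D\subseteq\Fix T$, and in particular $\Fix T\neq\emptyset$ because $C\cap D\neq\emptyset$ by hypothesis.

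With $\Fix T$ nonempty, I would run the convergence argument for averaged maps. Firm nonexpansiveness gives, for any $z\in\Fix T$, the inequality $\|x_{k+1}-z\|^2\leq\|x_k-z\|^2-\|x_k-x_{k+1}\|^2$, so the iterates are Fej\'er monotone with respect to $\Fix T$ (hence bounded) and, by summing, asymptotically regular: $\|x_{k+1}-x_k\|=\|Tx_k-x_k\|\to0$. Passing to a convergent subsequence (finite-dimensionality of ${\mathcal H}$ makes this immediate), the limit $x$ satisfies $Tx=x$ since $Tx_k-x_k\to0$ and $T$ is continuous, and Fej\'er monotonicity upgrades subsequential convergence to convergence of the whole sequence to this single $x\in\Fix T$. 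Finally, applying the fixed point characterisation of the previous paragraph to this limit $x$ gives $P_Dx\in C\cap D$, the asserted conclusion.

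I expect the main obstacle to be the convergence step rather than the fixed point bookkeeping: extracting asymptotic regularity and then converting Fej\'er monotonicity plus subsequential convergence into convergence of the full sequence is the technical heart of the argument. In the finite-dimensional setting assumed here, compactness of bounded sequences removes the need for the weak-convergence and demiclosedness machinery that would be required in a general Hilbert space, which is what makes the sketch above go through cleanly.
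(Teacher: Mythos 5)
Your proof is correct, but there is nothing in the paper to compare it against line-by-line: Theorem~\ref{th:DR convex} is quoted without proof from the cited reference \cite{BauComLuk} (Theorem~3.13 there), so the paper's ``proof'' is the citation itself. What you have written is essentially the standard argument underlying that reference: firm nonexpansiveness of $T=\tfrac12(I+R_CR_D)$; the fixed-point bookkeeping $C\cap D\subseteq\Fix T$ (so $\Fix T\neq\emptyset$) and $P_D(\Fix T)\subseteq C\cap D$ via the identity $P_C=\tfrac12(I+R_C)$ applied at $R_Dx$; the Fej\'er/asymptotic-regularity inequality $\|x_{k+1}-z\|^2\leq\|x_k-z\|^2-\|x_k-x_{k+1}\|^2$; and extraction of a limit which must be a fixed point. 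All of these steps check out. The one genuine point of divergence is scope: your compactness step (bounded sequences have convergent subsequences) is only available because ${\mathcal H}$ is finite dimensional. That assumption is indeed the paper's standing hypothesis in Section~\ref{sec: opt prelim}, so your argument fully proves the theorem \emph{as it is used here}; but it does not prove \cite[Theorem~3.13]{BauComLuk} in its native generality, where ${\mathcal H}$ is an arbitrary Hilbert space, the convergence of $(x_k)$ is only weak, and your subsequence step must be replaced by weak sequential compactness together with the demiclosedness principle for nonexpansive mappings (your closing remark shows you are aware of exactly this trade-off). In short: a correct, self-contained proof by the standard averaged-operator route, legitimately simplified to the finite-dimensional setting the paper actually needs.
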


In general, beyond the case of convex sets there is insufficient theory to justify application of projection methods. Indeed, most non-convex results in the literature rely on restrictive regularity notions from nonsmooth analysis and, even then, only yield local convergence guarantees \cite{HLN,Phan,DaoTam}. Nevertheless, projection methods have been empirically observed to still perform reasonably well in certain non-convex settings include matrix completion \cite{anziam}, graph colouring \cite{graphcolouring}, combinatorial optimization \cite{combdesign,comboptim}, road design \cite{roaddesign}, and constraint satisfaction \cite{GraEls}. This experience suggests use of the Douglas--Rachford method in the setting outlined in the following section.

\subsection{Hilbert spaces of matrix ensembles}
Although the matrices we work with have complex entries, for the purpose of algorithms is more convenient to work in a space over the real field. In this section, we provide the necessary background to justify this process. Before doing so, we first recall that the \emph{Frobenius inner-product} on ${\mathbb C}^{N\times N}$, denoted $\langle\cdot,\cdot\rangle_F$, is given by
$ \langle U,V\rangle_F := \Tr(U^*V) = \sum_{i,j=1}^NU^*_{ij}V_{ij}$.
The induced norm is known as the \emph{Frobenius norm} and is given by
  \begin{equation}
  \label{eq:fro norm}
  \|U\|_2^2=\sum_{i,j=1}^N|U_{ij}|^2= \sum_{i,j=1}^n(\Re U_{ij})^2+(\Im U_{ij})^2,
  \end{equation}
where $\Re z$ and $\Im z$ denote the real and imaginary parts of a complex number $z$, respectively.   

Given a finite set $A$ with $|A|=m$, we consider the collection $({\mathbb C}^{N\times N})^A$ of matrix-valued functions $F:A\to{\mathbb C}^{N\times N}$ which, with abusive notation, we identify with 
 $${\mathcal H}:=({\mathbb C}^{N\times N})^m:=\underbrace{{\mathbb C}^{N\times N}\times\cdots\times{\mathbb C}^{N\times N}}_{m\text{~factors}}. $$
Depending on the inner-product and field, $({\mathbb C}^{N\times N})^m$ may be viewed as a Hilbert space in two ways:
\begin{enumerate}[(a)]
  \item Over the field $\mathbb{C}$, ${\mathcal H}$ can be equipped with the inner-product $\langle\cdot,\cdot\rangle_{\mathbb{C}}$ given by
    \begin{equation}
    \label{eq:ip a}
    \langle\mathbf{U},\mathbf{V}\rangle_{\mathbb{C}} := \sum_{j=1}^{m}\langle U_j,V_j\rangle_F.
    \end{equation} 
      \item Over the field $\mathbb{R}$, ${\mathcal H}$ can be equipped with the inner-product $\langle\cdot,\cdot\rangle_{\mathbb{R}}$ given by
  \begin{equation}
  \label{eq:ip b}
  \langle\mathbf{U},\mathbf{V}\rangle_{\mathbb{R}} := \langle\Re \mathbf{U},\Re\mathbf{V}\rangle_F  + \langle \Im \mathbf{U},\Im\mathbf{V}\rangle_F = \sum_{j=1}^{m}\langle \Re U_j,\Re V_j\rangle_F  + \sum_{j=1}^{m}\langle \Im U_j,\Im V_j\rangle_F.
  \end{equation}
\end{enumerate}
Since we will only be concerned with the latter (real) inner-product, we will drop the subscript ``$\mathbb{R}$'' whenever there is no ambiguity.

\begin{proposition}
 The norms in both of the aforementioned spaces coincide.
\end{proposition}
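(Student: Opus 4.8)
The plan is to reduce both norms to the same entrywise sum of squares, so that their equality becomes the elementary identity $|z|^2 = (\Re z)^2 + (\Im z)^2$ applied to each matrix entry. Since both inner products are built from the Frobenius inner product, the whole argument is a matter of unwinding definitions; no analytic input is required.

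First I would compute the norm induced by the complex inner product \eqref{eq:ip a}. Setting $\mathbf{V} = \mathbf{U}$ and using $\langle U_j, U_j\rangle_F = \Tr(U_j^* U_j) = \sum_{i,k=1}^N |(U_j)_{ik}|^2$, I obtain
\begin{equation*}
\|\mathbf{U}\|_{\mathbb{C}}^2 = \sum_{j=1}^m \sum_{i,k=1}^N |(U_j)_{ik}|^2.
\end{equation*}
Next I would compute the norm induced by the real inner product \eqref{eq:ip b}. The key observation here is that $\Re U_j$ and $\Im U_j$ are real matrices, so the conjugation implicit in $\langle\cdot,\cdot\rangle_F$ is inert and $\langle \Re U_j, \Re U_j\rangle_F = \sum_{i,k=1}^N (\Re(U_j)_{ik})^2$, with the analogous statement for the imaginary part. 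Hence
\begin{equation*}
\|\mathbf{U}\|_{\mathbb{R}}^2 = \sum_{j=1}^m \sum_{i,k=1}^N \left[ (\Re(U_j)_{ik})^2 + (\Im(U_j)_{ik})^2 \right].
\end{equation*}

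Finally, applying $|z|^2 = (\Re z)^2 + (\Im z)^2$ to $z = (U_j)_{ik}$ termwise shows the two displays agree, giving $\|\mathbf{U}\|_{\mathbb{C}} = \|\mathbf{U}\|_{\mathbb{R}}$. In fact this identity is already recorded at the entry level in \eqref{eq:fro norm}, so the result is essentially immediate; the only point requiring any care is the remark that the conjugation in the Frobenius product drops out for the real matrices $\Re U_j$ and $\Im U_j$, which is precisely what makes the real and complex bookkeeping coincide. There is no genuine obstacle here — the proposition merely certifies that the two Hilbert-space structures on $(\mathbb{C}^{N\times N})^m$ share a common norm, which is what licenses working over the real field in the projection algorithms to follow.
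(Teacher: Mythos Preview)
Your proof is correct and takes essentially the same approach as the paper: the paper's proof is the one-line remark ``Follows by combining \eqref{eq:fro norm}, \eqref{eq:ip a} and \eqref{eq:ip b},'' and what you have written is precisely the explicit unpacking of that sentence, reducing both norms to the entrywise sum $\sum_{j,i,k}|(U_j)_{ik}|^2$ via $|z|^2=(\Re z)^2+(\Im z)^2$.
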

\begin{proof}
Follows by combining \eqref{eq:fro norm}, \eqref{eq:ip a} and \eqref{eq:ip b}. 
\end{proof}

\subsection{Hilbert spaces, constraints and projections for  wavelet construction}\label{sec: constraints and projectors}
We concentrate now on the Hilbert space ${\mathcal H}=\CsigmaMn_\sigma$ and observe that the discretised wavelet construction Problem \ref{prob:discrete wavelet const prob} is equivalent to the following:

\begin{problem}\label{constraints for wavelet construction} Given an integer $M\geq 4$, find a matrix ensemble ${\mathbf U}=\{U_j\}_{j\in Q_M^n}\in \cap_{\ell =0}^{2^n-1}C_1^{(\ell )}\cap C_2\subset{\mathcal H}$ 
where the constraint sets are defined as
\begin{align*}
C_1^{(0)}&:=\{{\mathbf U}\in{\mathcal H}:\ U_j\in {\mathcal U}(2^n)\ (j\in Q_{M/2}^n\setminus\{0\}),\ U_0\in 1\otimes {\mathcal U}(2^n-1)\}\\
C_1^{(\ell )}&:=\{{\mathbf U}\in{\mathcal H}:\ ({\mathcal F}_M^{-1}\chi_\ell{\mathcal F}_M{\mathbf U})_j\in {\mathcal U}(2^n),\ (j\in Q_{M/2}^n,\ 1\leq\ell\leq 2^n-1)\}\\
C_2&:=\bigg\{{\mathbf U}\in{\mathcal H}:\ \sum_{k\in Q_M^n}c_{\alpha k}U_k\in{\mathbb C}\otimes{\mathbb C}^{(2^n-1)\times (2^n-1)}\text{ for $1\leq|\alpha |\leq d$}\bigg\}
\end{align*}
where $c_{\alpha k}=\sum_{j\in Q_M^n}j^\alpha e^{2\pi i\langle j,k\rangle /M}$.
\end{problem}

\subsubsection{Completeness and unitarity -- the constraints $C_1^{(\ell )}$}
Recall that by Proposition \ref{prop: unitary ensembles}, unitarity of the trigonometric polynomial $U(\xi )$ at all $\xi$ is equivalent to the unitarity of the ensembles $\{{\mathbf U}^{(\ell )}\}_{\ell =0}^{2^n-1}$ where ${\mathbf U}=\{U_j=U(j/M)\}_{j\in Q_M^n}$ and ${\mathbf U}^{(\ell )}={\mathcal F}_M\chi_\ell{\mathcal F}^{-1}{\mathbf U}$. Completeness requires $m_0(0)=1$ or equivalently, $(U_0)_{00}=1$. 

Let 
 $A=\left(\begin{matrix}a&{\mathbf v}^T\\{\mathbf w}&B\end{matrix}\right)\in{\mathbb C}^{2^n\times 2^n}$ with $a\in{\mathbb C}$, ${\mathbf v}$, $\mathbf w\in{\mathbb C}^{2^n-1}$ and $B\in{\mathbb C}^{(2^n-1)\times (2^n-1)}$. Then the projection $P_{1\otimes {\mathcal U}(2^n-1)}(A)$ of $A$ onto $1\otimes {\mathcal U}(2^n-1)$ is given by
$$P_{1\otimes {\mathcal U}(2^n-1)}(A)=\left(\begin{matrix}1&{\mathbf 0}^T\\{\mathbf 0}&P_{{\mathcal U}(2^n-1)}(B)\end{matrix}\right),$$
so the projection ${\mathbb P}_{C_1}^{(0)}$ from ${\mathcal H}$ onto $C_1$ is given by 
$$({\mathbb P}_{C_1}^{(0)}{\mathbf U})_j=\begin{cases}
\sigma_\ell P_{1\otimes {\mathcal U}(2^n-1)}(U_0)&\text{ if $j=Mv_\ell /2$}\\
P_{{\mathcal U}(2^n)}U_j&\text{ if $j\neq Mv_\ell /2$}
\end{cases}$$
where $P_{{\mathcal U}(2^n)}$ is the projection of ${\mathbb C}^{2^n\times 2^n}$ onto ${\mathcal U}(2^n)$ given in Proposition \ref{prop:proj examples}.

\vskip0.2in
We recall the translation operators $\tau_k$  of equation (\ref{tau def}) and define {\it modulation} operators $\mu_k$ $(k\in{\mathbb Z}^n)$ on $\CsigmaMn$ defined by
$(\mu_k{\mathbf U})_j=e^{2\pi i\langle j,k\rangle /M}U_j$.
We then have the intertwining relations
\begin{equation}
{\mathcal F}_M\tau_k=\mu_k{\mathcal F}_M;\quad {\mathcal F}_M\mu_k=\tau_{-k}{\mathcal F}_M\quad (k\in{\mathbb Z}^n)\label{trans-mod comm}
\end{equation}
and  similarly, ${\mathcal F}_M^{-1}\tau_k=\mu_{-k}{\mathcal F}_M^{-1}$ and ${\mathcal F}_M^{-1}\mu_k=\tau_k{\mathcal F}_M^{-1}$. The relationship between the modulation operators $\mu_k$ and the operator $\chi_\ell$ of Lemma \ref{lem: off-centre ensembles} is given by
$\mu_k=\prod_{\ell =1}^{n}(\chi_{2^\ell} )^{-2k_\ell}$
from which we immediately see that 
\begin{equation}
\mu_k\chi_\ell =\chi_\ell \mu_k.\label{chi-M mod}
\end{equation}
Let $S_\ell ={\mathcal F}_M\chi_\ell{\mathcal F}_M^{-1}$ $(1\leq\ell\leq n)$. Then (\ref{trans-mod comm}) and (\ref{chi-M mod}) give
\begin{equation}
S_\ell\tau_k={\mathcal F}_M\chi_\ell{\mathcal F}_M^{-1}\tau_k={\mathcal F}_M\chi_\ell \mu_k{\mathcal F}_M^{-1}={\mathcal F}_M \mu_k\chi_\ell{\mathcal F}_M^{-1}=\tau_k{\mathcal F}_M\chi_\ell{\mathcal F}_M^{-1}=\tau_k S_\ell .\label{S-tau comm}
\end{equation}
Let ${\mathbb P}_{{\mathcal U}(2^n)^{Q_M^n}}$  be the projection of $\CsigmaMn$ onto 
$${\mathcal U}(2^n)^{Q_M^n}=\{{\mathbf U}\in\CsigmaMn:\ U_j\in {\mathcal U}(2^n)\text{ for all }j\in Q_M^n\}$$
given by
$$({\mathbb P}_{{\mathcal U}(2^n)^{Q_M^n}}{\mathbf U})_j=P_{{\mathcal U}(2^n)}U_j\quad (j\in Q_M^n).$$
 For $1\leq\ell\leq 2^n-1$, consider the operator ${\mathbb P}_{C_1^{(\ell )}}:\CsigmaMn\to\CsigmaMn$ given by 
$${\mathbb P}_{C_1^{(\ell )}}{\mathbf U}=
S_{-\ell}{\mathbb P}_{{\mathcal U}(2^n)^{Q_M^n}}S_\ell .$$
We aim to show that ${\mathbb P}_{C_1^{(\ell )}}$ is the projection of ${\mathcal H}$ onto $C_1^{(\ell )}$.

Given ${\mathbf A}\in\CsigmaMn$ and $X\in{\mathbb C}^{2^n\times 2^n}$, we define $X{\mathbf A}\in\CsigmaMn$ by $(X{\mathbf A})_j=XA_j$ $(j\in Q_M^n$). Note that if $\sigma\in {\mathcal U}(2^n)$ and ${\mathbf A}\in\CsigmaMn$ then an application of (\ref{unitary proj mult comm}) gives
$$[{\mathbb P}_{{\mathcal U}(2^n)^{Q_M^n}}(\sigma{\mathbf A})]_j=P_{{\mathcal U}(2^n)}(\sigma A_j)=\sigma P_{{\mathcal U}(2^n)}(A_j)=\sigma ({\mathbb P}_{{\mathcal U}(2^n)^{Q_M^n}}{\mathbf A})_j=[\sigma {\mathbb P}_{{\mathcal U}(2^n)^{Q_M^n}}{\mathbf A}]_j$$
so that ${\mathbb P}_{{\mathcal U}(2^n)^{Q_M^n}}(\sigma {\mathbf A})=\sigma {\mathbb P}_{{\mathcal U}(2^n)^{Q_M^n}}{\mathbf A}$.

\begin{proposition}
Suppose ${\mathbf U}\in{\mathcal H}$ is an ensemble satisfying the consistency condition, i.e., $\tau_{-Mv_{2^k}/2}{\mathbf U}=\sigma_{2^k}{\mathbf U}$ $(0\leq k\leq n-1)$. Then for $1\leq\ell\leq 2^n-1$, ${\mathbb P}_{C_1^{(\ell )}}{\mathbf U}\in{\mathcal H}$, i.e., ${\mathbb P}_{C_1^{(\ell )}}$ preserves ${\mathcal H}$.
\end{proposition}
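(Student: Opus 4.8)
The plan is to verify directly that ${\mathbb P}_{C_1^{(\ell)}}{\mathbf U}$ satisfies the relation defining ${\mathcal H}=\CsigmaMn_\sigma$, namely that $\tau_{-Mv_{2^k}/2}({\mathbb P}_{C_1^{(\ell)}}{\mathbf U})=\sigma_{2^k}({\mathbb P}_{C_1^{(\ell)}}{\mathbf U})$ for each $0\le k\le n-1$. Since ${\mathbb P}_{C_1^{(\ell)}}=S_{-\ell}\,{\mathbb P}_{{\mathcal U}(2^n)^{Q_M^n}}\,S_\ell$ is a composition of three operators, the strategy is to slide the translation $\tau_{-Mv_{2^k}/2}$ rightward through $S_{-\ell}$, then through ${\mathbb P}_{{\mathcal U}(2^n)^{Q_M^n}}$, then through $S_\ell$, until it lands on ${\mathbf U}$; invoke the consistency hypothesis to replace it by the constant left factor $\sigma_{2^k}$; and finally slide that factor back out to the left through the same three operators.

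To carry this out I would first assemble four commutation facts. First, by (\ref{S-tau comm}) the operator $S_\ell$ commutes with every translation $\tau_m$, and the same holds for $S_{-\ell}=S_\ell^{-1}$ (either by inheriting commutation under inversion, or by repeating the derivation of (\ref{S-tau comm}) with $\chi_{-\ell}$ in place of $\chi_\ell$). Second, because ${\mathbb P}_{{\mathcal U}(2^n)^{Q_M^n}}$ acts entrywise, $({\mathbb P}_{{\mathcal U}(2^n)^{Q_M^n}}{\mathbf U})_j=P_{{\mathcal U}(2^n)}U_j$, it commutes with the index-shifting operators $\tau_m$. Third, the computation immediately preceding the proposition gives ${\mathbb P}_{{\mathcal U}(2^n)^{Q_M^n}}(\sigma{\mathbf A})=\sigma\,{\mathbb P}_{{\mathcal U}(2^n)^{Q_M^n}}{\mathbf A}$ for any $\sigma\in{\mathcal U}(2^n)$, which applies to the permutation matrix $\sigma=\sigma_{2^k}$. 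Fourth, $S_{\pm\ell}$ commute with left multiplication by $\sigma_{2^k}$: since $S_{\pm\ell}={\mathcal F}_M\chi_{\pm\ell}{\mathcal F}_M^{-1}$ acts by scalar multipliers on the Fourier side (cf.\ the closed form in (\ref{Uli formula})), it sends an ensemble to a fixed scalar-linear combination of its entries, $(S_{\pm\ell}{\mathbf U})_j=\sum_m c_{jm}U_m$, and such a combination commutes with a constant left matrix factor.

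With these in hand the chain of equalities would run as follows:
\begin{align*}
\tau_{-Mv_{2^k}/2}\,{\mathbb P}_{C_1^{(\ell)}}{\mathbf U}
&=\tau_{-Mv_{2^k}/2}\,S_{-\ell}\,{\mathbb P}_{{\mathcal U}(2^n)^{Q_M^n}}\,S_\ell\,{\mathbf U}
=S_{-\ell}\,{\mathbb P}_{{\mathcal U}(2^n)^{Q_M^n}}\,S_\ell\,(\tau_{-Mv_{2^k}/2}{\mathbf U})\\
&=S_{-\ell}\,{\mathbb P}_{{\mathcal U}(2^n)^{Q_M^n}}\,S_\ell\,(\sigma_{2^k}{\mathbf U})
=\sigma_{2^k}\,S_{-\ell}\,{\mathbb P}_{{\mathcal U}(2^n)^{Q_M^n}}\,S_\ell\,{\mathbf U}
=\sigma_{2^k}\,{\mathbb P}_{C_1^{(\ell)}}{\mathbf U},
\end{align*}
where the first line uses facts one and two (the three translation commutations), the second invokes the hypothesis $\tau_{-Mv_{2^k}/2}{\mathbf U}=\sigma_{2^k}{\mathbf U}$ and then pushes $\sigma_{2^k}$ back out using facts four and three. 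As this holds for every $k$, we conclude ${\mathbb P}_{C_1^{(\ell)}}{\mathbf U}\in{\mathcal H}$.

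Facts one through three are routine and essentially contained in the excerpt; the step I expect to require the most care to state cleanly is the fourth, that $S_{\pm\ell}$ commutes with left multiplication by $\sigma_{2^k}$. The cleanest justification is to observe that $\chi_{\pm\ell}$, ${\mathcal F}_M$ and ${\mathcal F}_M^{-1}$ each act on an ensemble by multiplying its matrix entries by scalars and summing, so their composition $S_{\pm\ell}$ is a scalar-coefficient linear operator in the ensemble index, through which any constant left matrix factor passes freely. One should also record that $S_{-\ell}=S_\ell^{-1}$, which is immediate from the unimodular-multiplier form of $\chi_\ell$ in Lemma~\ref{lem: off-centre ensembles}; this is what lets the translation be slid through $S_{-\ell}$ and then back.
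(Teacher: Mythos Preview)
Your proposal is correct and follows essentially the same approach as the paper's own proof: slide the translation $\tau_{-Mv_{2^k}/2}$ through $S_{-\ell}$, ${\mathbb P}_{{\mathcal U}(2^n)^{Q_M^n}}$, and $S_\ell$ using (\ref{S-tau comm}) and the entrywise nature of the projection, apply the consistency hypothesis, and then pull $\sigma_{2^k}$ back out to the left. If anything, you are more explicit than the paper about the ``fourth fact''---that $S_{\pm\ell}$ commutes with left multiplication by a constant matrix---which the paper uses in its final equality without comment.
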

\begin{proof}
Since $\tau_n{\mathbb P}_{{\mathcal U}(2^n)^{Q_M^n}}={\mathbb P}_{{\mathcal U}(2^n)^{Q_M^n}}\tau_n$, we apply (\ref{S-tau comm}) and the consistency condition  to find
\begin{align*}
\tau_{-Mv_{2^k}/2}{\mathbb P}_{C_1^{(\ell )}}{\mathbf U}&=\tau_{-Mv_{2^k}/2}S_{-\ell}{\mathbb P}_{{\mathcal U}(2^n)^{Q_M^n}}S_\ell{\mathbf U}\\
&=S_{-\ell}\tau_{-Mv_{2^k}/2}{\mathbb P}_{{\mathcal U}(2^n)^{Q_M^n}}S_\ell{\mathbf U}\\
&=S_{-\ell}{\mathbb P}_{{\mathcal U}(2^n)^{Q_M^n}}\tau_{-Mv_{2^k}/2}S_\ell{\mathbf U}\\
&=S_{-\ell}{\mathbb P}_{{\mathcal U}(2^n)^{Q_M^n}}S_\ell\tau_{-Mv_{2^k}/2}{\mathbf U}\\
&=S_{-\ell}{\mathbb P}_{{\mathcal U}(2^n)^{Q_M^n}}S_\ell\sigma_k{\mathbf U}=\sigma_kS_{-\ell}{\mathbb P}_{{\mathcal U}(2^n)^{Q_M^n}}S_\ell{\mathbf U}=\sigma_k{\mathbb P}_{C_1^{(\ell )}}{\mathbf U}
\end{align*}
which completes the proof.
\end{proof}

\subsubsection{Regularity -- the constraint $C_2$}
An ensemble ${\mathbf U}\in\cap_{\ell =1}^{2^n-1}C_1^{(\ell )}$ may be interpreted as samples of a trigonometric polynomial $U:{\mathbb R}^n\to ({\mathbb C}^{2^n\times 2^n})^{Q_M^n}_\sigma$. In fact, if $U(\xi )=\dfrac{1}{M^n}\sum_{j\in Q_M^n}U_j\bigg(\sum_{k\in Q_M^n}e^{2\pi i\langle k,j/M-\xi\rangle}\bigg)$, then $U(\ell /M)=U_\ell$ $(\ell\in Q_M^n)$. It was shown in Section \ref{ssec: regularity} that if $U(\xi )_{j\varepsilon}=m_\varepsilon (\xi +v_j/2)$ $(0\leq j,\varepsilon\leq 2^n-1)$, $m_0(1)=1$ and $\partial^\alpha m_\varepsilon (\xi )\bigg|_{\xi =0}=0$ $(1\leq\varepsilon\leq 2^n-1,\ |\alpha |\leq d)$, then $\partial^\alpha m_0(\xi )\bigg|_{\xi =v_j/2}=0$ $(1\leq j\leq 2^n-1,\ |\alpha |\leq d)$. We conclude that if $m_0(0)=1$ then 
\begin{align*}
\sum_{k\in Q_M^n}c_{\alpha k}(U_k)_{\varepsilon , 0}&=\partial^\alpha U(\xi )_{\varepsilon , 0}\bigg|_{\xi =0}=0\quad (1\leq\varepsilon\leq 2^n-1,\ |\alpha |\leq d)\\
&\Rightarrow\partial^\alpha U(\xi )_{0,j}=\sum_{k\in Q_M^n}c_{\alpha k}(U_k)_{0,j}=0\quad (1\leq j\leq 2^n-1,\ |\alpha |\leq d)\\
&\Rightarrow\sum_{k\in Q_M^n}c_{\alpha k}U_k\in{\mathbb C}\otimes{\mathbb C}^{(2^n-1)\times (2^n-1)}\Rightarrow{\mathbf U}\in C_2.
\end{align*}
We let 
\begin{align*}
C_2'=\bigg\{{\mathbf U}\in({\mathbb C}^{2^n\times 2^n})^{Q_M^n}_\sigma:\ &\sum_{k\in Q_M^n}c_{\alpha k}U_k=\left(\begin{matrix}a_\alpha&{\mathbf 0}^T\\{\mathbf c}_\alpha&B_\alpha\end{matrix}\right)\bigg.\\
&\bigg.\text{ for some }a_\alpha\in{\mathbb C},\ c_\alpha \in{\mathbb C}^{2^n-1},\ B_\alpha\in{\mathbb C}^{(2^n-1)\times (2^n-1)}\bigg\}.
\end{align*}
Then we have shown that 
\begin{equation*}
\bigg(\cap_{\ell =0}^{2^n-1}C_1^{(\ell )}\bigg)\cap C_2=\bigg(\cap_{\ell =0}^{2^n-1}C_1^{(\ell )}\bigg)\cap C'_2
\end{equation*}
and for this reason, the constraint $C_2$ may be replaced by $C_2'$ in our algorithms.

We now consider the projection onto the subspace described by the regularity constraint $C'_2$. For $k\in Q_M^n$, define ${\mathbf w}_k\in{\mathbb C}^{2^n}$ by $({\mathbf w}_k)_\ell=(-1)^{\langle k,v_\ell\rangle}$, i.e., 
$$\big\{{\mathbf w}_k=(1,(-1)^{\langle k,v_1\rangle},\dots ,(-1)^{\langle k,v_{2^n-1}\rangle}\big)^T.$$
We note that because of the definition of the group operation $\oplus$ on $V^n$ defined in (\ref{group op}) we have that for all $k\in{\mathbb Z}^n$ and integers $0\leq j,\ell\leq 2^n-1$, $(-1)^{\langle k,v_j\oplus v_\ell\rangle}=(-1)^{\langle k,v_j\rangle}(-1)^{\langle k,v_\ell\rangle}$. Further, from the definition (\ref{sigma def nD}) of the permutation matrices $\sigma_j$, with ${\mathbf w}_k$ $(k\in Q_M^n)$ as above we have
\begin{align*}
(\sigma_j{\mathbf w}_k)_\ell&=\sum_{m=0}^{2^n-1}(\sigma_j)_{\ell m}({\mathbf w}_k)_m\\
&=\sum_{\{m:\ v_j\oplus v_\ell =v_m\}}(-1)^{\langle k, v_m\rangle}\\
&=(-1)^{\langle k,v_j\oplus v_\ell\rangle}=(-1)^{\langle k,v_j\rangle}(-1)^{\langle k,v_\ell\rangle}=(-1)^{\langle k,v_j\rangle}({\mathbf w}_k)_\ell ,
\end{align*}
so that ${\mathbf w}_k$ is an eigenvector of $\sigma_j$ with eigenvalue $(-1)^{\langle k,v_j\rangle}$.

 We work  within the Hilbert space 
\begin{align*}
{\mathcal H}_n&={\mathcal F}_M^{-1}({\mathbb C}^{2^n\times 2^n})_\sigma^{Q_M^n}\\
&=\{{\mathbf A}\in ({\mathbb C}^{2^n\times 2^n})^{Q_M^n}:\, \sigma_\ell A_k=(-1)^{\langle k,v_\ell\rangle}A_k\text{ for all $k\in Q_M^n$, $0\leq\ell\leq 2^n-1$}\}\\
&=\left\{{\mathbf A}\in ({\mathbb C}^{2^n\times 2^n})^{Q_M^n}:\,A_k={\mathbf w}_k\left(\begin{matrix}a_k&{\mathbf b}_k^T\end{matrix}\right)\right.\\
&\qquad\qquad\qquad\qquad\qquad\left.\text{ for some $a_k\in{\mathbb C}$, ${\mathbf b}_k\in{\mathbb C}^{2^n-1}$ and all $k\in Q_M^n$}\right\}
\end{align*}
so that a typical element of ${\mathcal H}_n$ is a matrix ensemble ${\mathbf A}\in({\mathbb C}^{2^n\times 2^n})^{Q_M^n}$ with $k$-th entry of the form
$$A_k=\left(\begin{matrix}a_k&{\mathbf b}_k^T\\
(-1)^{\langle k,v_1\rangle}a_k&(-1)^{\langle k,v_1\rangle}{\mathbf b}_k^T\\
\vdots&\vdots\\
(-1)^{\langle k,v_{2^n-1}\rangle}a_k&(-1)^{\langle k,v_{2^n-1}\rangle}{\mathbf b}_k^T
\end{matrix}\right)$$
for some $a_k\in{\mathbb C}$, ${\mathbf b}_k\in{\mathbb C}^{2^n-1}$.
Constraint $C_2'$ is equivalent to the condition
$\sum_{k\in Q_M^n} k^\alpha{\mathbf b}_k^T=0$ $(|\alpha |\leq d)$.
Let ${\mathcal X}_d$ be the collection of matrix ensembles ${\mathbf B}=(B_\alpha )_{|\alpha |\leq d}\subset{\mathbb C}^{2^n\times 2^n}$ of the form $B_\alpha =\left(\begin{matrix}0&\gamma_\alpha^T\\
{\mathbf 0}&\underline{\mathbf 0}\end{matrix}\right)$ where ${\mathbf 0}=(0,0,\dots ,0)\in{\mathbb C}^{2^n-1}$, $\underline{\mathbf 0}\in{\mathbb C}^{(2^n-1)\times (2^n-1)}$ is the zero matrix and $\gamma_\alpha\in{\mathbb C}^{2^n-1}$ for each $|\alpha |\leq d$. We now define an operator ${\mathcal R}:{\mathcal H}_n\to{\mathcal X}_d$ given by
\begin{equation}
({\mathcal R}{\mathbf A})_\alpha =\left(\begin{matrix}0&\sum_{k\in Q_M^n}k^\alpha{\mathbf b}_k^T\\
{\mathbf 0}&\underline{\mathbf 0}\end{matrix}\right).\label{R def}
\end{equation}
The projection we require is that onto the kernel of ${\mathcal R}$.

We decompose each $C\in{\mathbb C}^{2^n\times 2^n}$ as $C=\left(\begin{matrix}a&{\mathbf b}^T\\{\mathbf c}&D\end{matrix}\right)$ with $a\in{\mathbb C}$, ${\mathbf b}$, ${\mathbf c}\in{\mathbb C}^{2^n-1}$ and $D\in{\mathbb C}^{(2^n-1)\times (2^n-1)}$ and write $a=C_{00}$, ${\mathbf b}=C_{01}$, ${\mathbf c}=C_{10}$ and $D=C_{11}$. Let ${\mathbf A}\in{\mathcal H}_n$ and ${\mathbf B}\in{\mathcal X}_d$. We then have
\begin{align*}
\langle{\mathcal R}{\mathbf A},{\mathbf B}\rangle&=\sum_{|\alpha |\leq d}\left\langle\left(\begin{matrix}0&\sum_{k\in Q_M^n}k^\alpha{\mathbf b}_k^T\\{\mathbf 0}&\underline{\mathbf 0}\end{matrix}\right),\left(\begin{matrix}0&{\mathbf\gamma}_\alpha^T\\{\mathbf 0}&\underline{\mathbf 0}\end{matrix}\right)\right\rangle\\
&=\sum_{|\alpha |\leq d}\sum_{k\in Q_M^n}k^\alpha\big\langle{\mathbf b}_k^T,{\mathbf\gamma}_\alpha^T\big\rangle\\
&=\sum_{k\in Q_M^n}\left\langle{\mathbf b}_k^T,\sum_{|\alpha |\leq d}k^\alpha{\gamma}_\alpha^T\right\rangle\\
&=\sum_{k\in Q_M^n}\left\langle\left(\begin{matrix}a_k&{\mathbf b}_k^T\end{matrix}\right),\left(\begin{matrix}0&\sum_{|\alpha |\leq d}k^\alpha\gamma_\alpha^T\end{matrix}\right)\right\rangle\\
&=2^{-n}\sum_{k\in Q_M^n}\left\langle{\mathbf w}_k\left(\begin{matrix}a_k&{\mathbf b}_k^T\end{matrix}\right),{\mathbf w}_k\left(\begin{matrix}0&\sum_{|\alpha |\leq d}k^\alpha\gamma_\alpha^T\end{matrix}\right)\right\rangle
=\langle{\mathbf A},{\mathcal R}^*{\mathbf B}\rangle
\end{align*}
from which we conclude that 
\begin{equation}
({\mathcal R}^*{\mathbf B})_k=2^{-n}{\mathbf w}_k\left(\begin{matrix}0&\sum_{|\alpha |\leq d}k^\alpha\gamma_\alpha^T\end{matrix}\right)=
2^{-n}\left(\begin{matrix}0&\sum_{|\alpha |\leq d}k^\alpha\gamma_\alpha^T\\
0&(-1)^{\langle k,v_1\rangle}\sum_{|\alpha |\leq d}k^\alpha\gamma_\alpha^T\\
0&(-1)^{\langle k,v_2\rangle}\sum_{|\alpha |\leq d}k^\alpha\gamma_\alpha^T\\
\vdots&\vdots\\
0&(-1)^{\langle k,v_{2^n-1}\rangle}\sum_{|\alpha |\leq d}k^\alpha\gamma_\alpha^T\end{matrix}\right).\label{R* def}
\end{equation}
Let $C_{n,d}=|\{\alpha\in({\mathbb N}\cup\{0\})^n:\, |\alpha |\leq d\}|$. For example, $C_{2,d}=(d+1)(d+1)/2$. If ${\mathbf B}=(B_\alpha )_{|\alpha |\leq d}\in{\mathcal X}_d$, i.e, $B_\alpha=\left(\begin{matrix}0&\gamma_\alpha^T\\{\mathbf 0}&\underline{\mathbf 0}\end{matrix}\right)$, then 
$$({\mathcal R}{\mathcal R}^*{\mathbf B})_\beta=2^{-n}\left(\begin{matrix}0&\sum_{|\alpha |\leq d}G_{\beta\alpha}\gamma_\alpha^T\\{\mathbf 0}&\underline{\mathbf 0}\end{matrix}\right)$$
where $G\in{\mathbb C}^{C_{n,d}\times C_{n,d}}$ has $(\beta ,\alpha )$-th entry $G_{\beta\alpha}=\sum_{k\in Q_M^n}k^{\alpha +\beta}$. We wish to show that ${\mathcal R}{\mathcal R}^*$ is invertible. Consider functions $r_\alpha :Q_M^n\to{\mathbb C}$ given by $r_\alpha (k)=k^\alpha$. We claim that $\{r_\alpha\}_{|\alpha |\leq d}$ is a linearly independent set. To see this, suppose there are constants $\{a_\alpha\}_{|\alpha |\leq d}\subset{\mathbb C}$ such that $\sum_{|\alpha |\leq d}a_\alpha r_\alpha =0$, i.e., $\sum_{|\alpha |\leq d}a_\alpha k^\alpha =0$ for all $k\in Q_M^n$. Let $p(x)=\sum_{|\alpha |\leq d}a_\alpha x^\alpha$ $(x\in{\mathbb R}^n)$. Then $p$ is a (multivariate) polynomial of degree less than or equal to $d$ and $p(k)=0$ for all $k\in Q_M^n$. Hence $p\equiv 0$, i.e., $a_\alpha=0$ for all $\alpha$. We conclude that $\{r_\alpha\}_{|\alpha |\leq d}$ is a linearly independent set. Suppose now that ${\mathbf a}=(a_\alpha)_{|\alpha |\leq d}$  is such that $G{\mathbf a}={\mathbf 0}$. Then 
\begin{equation}
0=(G{\mathbf a})_\beta=\sum_{|\alpha |\leq d}G_{\beta\alpha}a_\alpha=\sum_{k\in Q_M^n}k^\beta\sum_{|\alpha |\leq d}a_\alpha k^\alpha=\bigg\langle r_\beta ,\sum_{|\alpha |\leq d}a_\alpha r_\alpha\bigg\rangle .\label{LI}
\end{equation}
But $\sum_{|\alpha |\leq d}a_\alpha r_\alpha\in S_d=\text{sp}\{r_\beta\}_{|\beta |\leq d}$ and $\{r_\beta\}_{|\beta |\leq d}$ is a basis for $S_d$, so by (\ref{LI}) we conclude that $\sum_{|\alpha |\leq d}a_\alpha r_\alpha\equiv 0$, or equivalently, $p(k)=0$ for all $k\in Q_M^n$ where $p(x)=\sum_{|\alpha |\leq d}a_\alpha x^\alpha$. Hence $p\equiv 0$ and $a_\alpha =0$ for all $\alpha$, i.e., $G$ is invertible.
We then have
\begin{equation}
(({\mathcal R}{\mathcal R}^*)^{-1}{\mathbf B})_\beta=2^n\left(\begin{matrix}0&\sum_{|\alpha |\leq d}G^{-1}_{\beta\alpha}\gamma^T_\alpha\\
{\mathbf 0}&\underline{\mathbf 0}\end{matrix}\right).\label{RR* def}
\end{equation}
Combining (\ref{R def}), (\ref{R* def}) and (\ref{RR* def}) gives
$$({\mathcal R}^*({\mathcal R}{\mathcal R}^*)^{-1}{\mathcal R}{\mathbf A})_k=\sum_{|\alpha |\leq d}k^\alpha\sum_{|\beta |\leq d}G^{-1}_{\alpha\beta}\sum_{\ell\in Q_M^n}\ell^\beta\left(\begin{matrix}0&{\mathbf b}_\ell^T\\{\mathbf 0}&\underline{\mathbf 0}\end{matrix}\right)$$
so that the projection $Q$ of an ensemble ${\mathbf A}\in{\mathcal F}_M^{-1}\CsigmaMn$ onto ${\mathcal F}_M^{-1}C'_2$ is given by 
\begin{align*}
(Q{\mathbf A})_k&=A_k-({\mathcal R}^*({\mathcal R}{\mathcal R}^*)^{-1}{\mathcal R}{\mathbf A})_k\\
&={\mathbf w}_k\left(\begin{matrix} a_k&{\mathbf b}_k^T-\sum_{|\alpha |\leq d}\sum_{|\beta |\leq d}\sum_{\ell\in Q_M^n}G^{-1}_{\alpha\beta}k^\alpha\ell^\beta{\mathbf b}_k^T\end{matrix}\right).
\end{align*}
Finally, the required projection $P_{C'_2}$ of $\CsigmaMn$ onto $C'_2$ is given by 
\begin{align}
&(P_{C'_2}{\mathbf U})_j=({\mathcal F}_MQ{\mathcal F}_M^{-1}{\mathbf U})_j\notag\\
&=U_j-\sum_{k\in Q_M^n}(Q{\mathcal F}_M^{-1}{\mathbf U})_ke^{2\pi i\langle k,j\rangle /M}\notag\\
&=U_j-\sum_{k\in Q_M^n}({\mathcal R}^*({\mathcal R}{\mathcal R}^*)^{-1}{\mathcal R}({\mathcal F}_M^{-1}{\mathbf U}))_ke^{2\pi i\langle k,j\rangle/M}\notag\\
&=U_j-\frac{1}{M^n}\sum_{k\in Q_M^n}e^{2\pi i\langle j,k\rangle /M}\sum_{|\alpha |\leq d}k^\alpha\sum_{|\beta |\leq d}G^{-1}_{\alpha\beta}\left(\begin{matrix}0&\sum_{\ell\in Q_M^n}\ell^\beta ({\mathcal F}_M^{-1}{\mathbf U})^T_{\ell, 01}\\{\mathbf 0}&\underline{\mathbf 0}\end{matrix}\right).\label{proj C_2'}
\end{align}
Let $C\in {\mathbb C}^{R^n_d\times Q_M^n}$ have $(\beta ,m)$-th entry $c_{\beta m}=\sum_{\ell\in Q_M^n}\ell^\beta e^{-2\pi i\langle m,\ell\rangle /M}$. Then (\ref{proj C_2'}) may be written as
$$({\mathbb P}_{C'_2}{\mathbf U})_j=U_j-\frac{1}{M^n}\sum_{m\in Q_M^n}(C^*G^{-1}C)_{jm}\left(\begin{matrix}0&(U_m)_{01}\\{\mathbf 0}&\underline{\mathbf 0}\end{matrix}\right).$$

\section{Computational Results: one- and two-dimensional wavelets}\label{sec: comp}
In this section, we report representative computational results for the DR algorithm (as described in Algorithm~\ref{alg:DR}) applied to the formulations described in Problem~\ref{constraints for wavelet construction}.\footnote{The accompanying source code is available at \href{https://gitlab.com/matthewktam/drwavelets}{https://gitlab.com/matthewktam/drwavelets}.}
The main goal of reporting these results is to provide an insight into the typical number of iterations and the success rate of the method for the wavelet reconstruction problem. All experiments implemented in Python 3.7 and a machine having an Intel Xeon E5-4650 @ 2.70GHz running Red Hat Enterprise Linux 3.10.

For each value of $(M,d)$ examined, ten replications of the DR algorithm were run, each starting from a different randomly generated initialisation $x_0\in D$, where $D$ denotes the diagonal subspace from Section~\ref{sec: projection algorithms}. More precisely, the real and complex entries, respectively, of a matrix ensemble $\mathbf{U}^0\in(\mathbb{C}^{2^n\times 2^n})^{Q^n_M}$ were generated entry-wise by sampling from the uniform distribution on the interval $(-1,+1)$. The ensemble $\mathbf{U}_0$ was then projected onto $C_1^{(0)}$, and its projection was then used to form the tuple of ensembles $x_0$.

The algorithm was terminated if either: (i) the stopping criterion 
 $$ \|x_k-x_{k+1}\| < \epsilon $$
was satisfied with $\epsilon=10^{-3}$, or (ii) more than $10^{6}$ iterations had been performed. In the case that the algorithm terminated successfully (i.e., the stopping criterion was satisfied), orthogonality of the resulting trigonometric polynomial was checked numerically using Bownik's condition as described in Section~\ref{ssec: regularity}. For the 2D problem, non-separability was also checked using the procedure outlined in Section~\ref{ssec: nonseparability}.

Tables~\ref{t:1d results}  and \ref{t:2d results} report a summary of the results for the 1D and 2D problems, respectively. In addition to the number of instances solved (out of ten), the mean number of iterations and time in seconds for solved instances are shown. The maxima across solved instances are also shown in parentheses. The mean and (in parentheses) maximum separability measure of solved examples is shown in the final column of table \ref{t:2d results}.

Exemplar results are provided in Figures~\ref{f:R6final}--\ref{f:2dneil}. The two-dimensional scaling function and wavelets of Figure \ref{f:2dfinal} and associated filters pass Bownik's test (Theorem \ref{thm: regularity-bownik}) for orthogonality and the separability measure (see Section \ref{ssec: nonseparability}) of the filter coefficient matrix $G^0$ is $0.041$. This compares poorly with the average separability $(0.438)$ of random matrices $H=(h_{jk})_{j,k=0}^5$ satisfying the conditions
$$\sum_{j,k=0}^5h_{jk}=1;\qquad \sum_{j,k=0}^5|h_{jk}|^2=\frac{1}{2}.$$
Of course, these filters do not satisfy the extra regularity, consistency, or unitarity conditions satisfied by the filter given in Figure~\ref{f:2dfinal}(c). Nevertheless, real-valued scaling functions have been generated by the algorithm described in this paper with $M=6$, $d=2$ and relatively high non-separability. An example is given in Figure~\ref{f:2dneil}. The separability measure of this example is approximately $0.315$. 

Further constraints designed to force real-valuedness of multidimensional scaling functions and wavelets and  to promote symmetry and cardinality are imposed on matrix ensembles in \cite {DHT}. 

\begin{table}[htb]
	\caption{Mean (worst case) results from $10$ replications for the 1D problem with $\epsilon=10^{-3}$.\label{t:1d results}}
	\centering
	\begin{tabular}{|c|c|rr|rr|} \hline
	  $(M,d)$  & Solved   & \multicolumn{2}{c|}{Iterations}    & \multicolumn{2}{c|}{Time (s)} \\ \hline
	  $(4,1)$  & 10                &     122.2 &     (162)     &     0.1 &      (0.2) \\
	  $(6,2)$  & 9                 &   3 852.0 &   (9 361)     &     5.1 &     (12.5) \\  
	  $(8,3)$  & 10                &  40 672.5 & (112 460)     &    67.6 &    (186.7) \\
	  $(10,4)$ & 8                 & 154 372.8 & (607 495)     &   325.8 &  (1 280.5) \\
	  $(12,5)$ & 9                 & 166 251.0 & (369 136)     &   422.0 &    (932.8) \\
	  $(14,6)$ & 6                 & 302 014.3 & (690 650)     &   917.1 &  (2 093.7) \\ \hline
\end{tabular}	  
\end{table}

\begin{figure}[hptb]
\begin{subfigure}[p]{0.99\textwidth}
	\centering
	\vspace{1.25em}
	\includegraphics[width=0.95\textwidth]{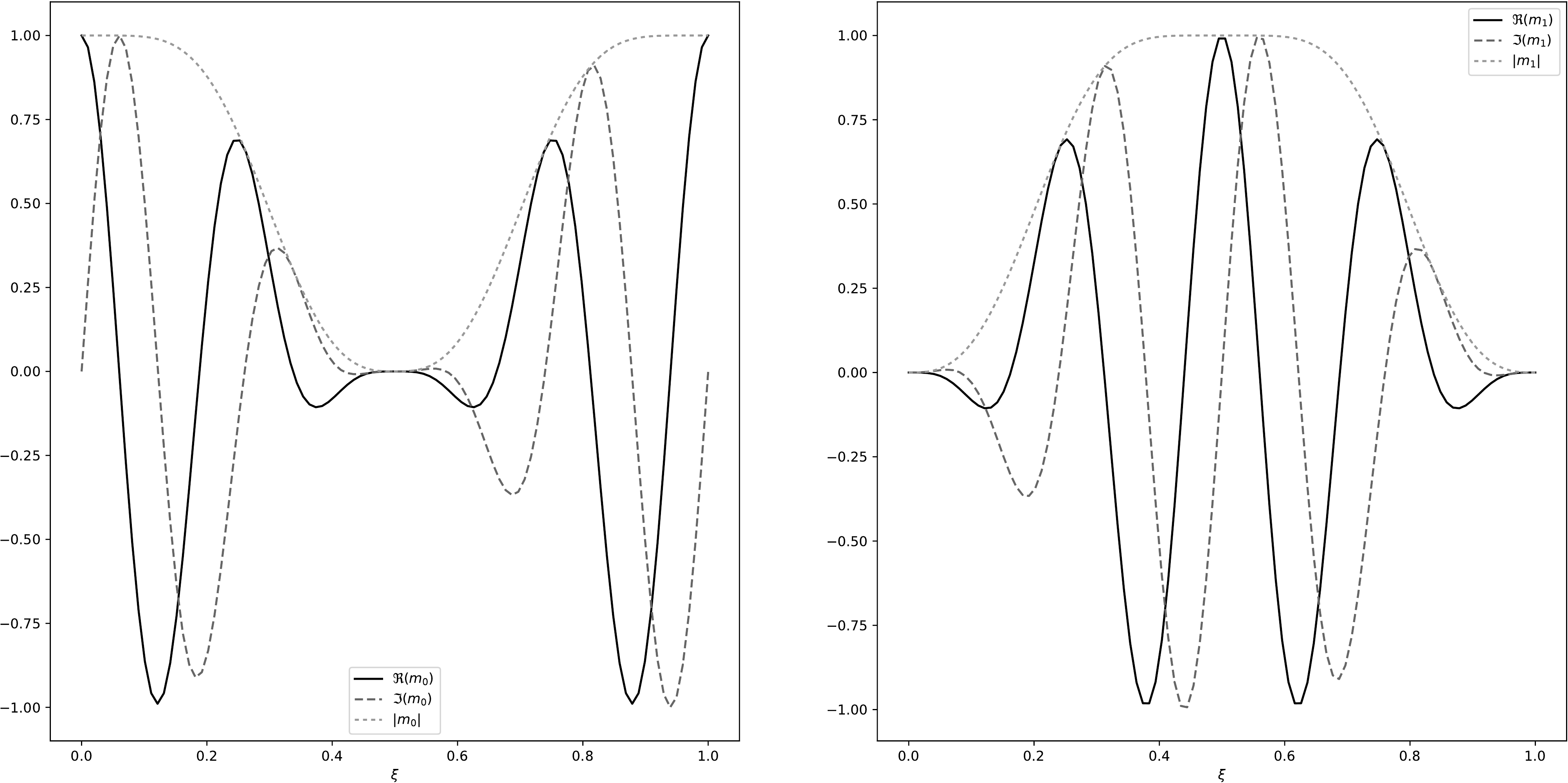}
	\caption{The polynomials $m_0(\xi)$ and $m_1(\xi)$, respectively.}
\end{subfigure}	
	
\bigskip	

\begin{subfigure}[p]{0.49\textwidth}
\centering
\includegraphics[height=18em]{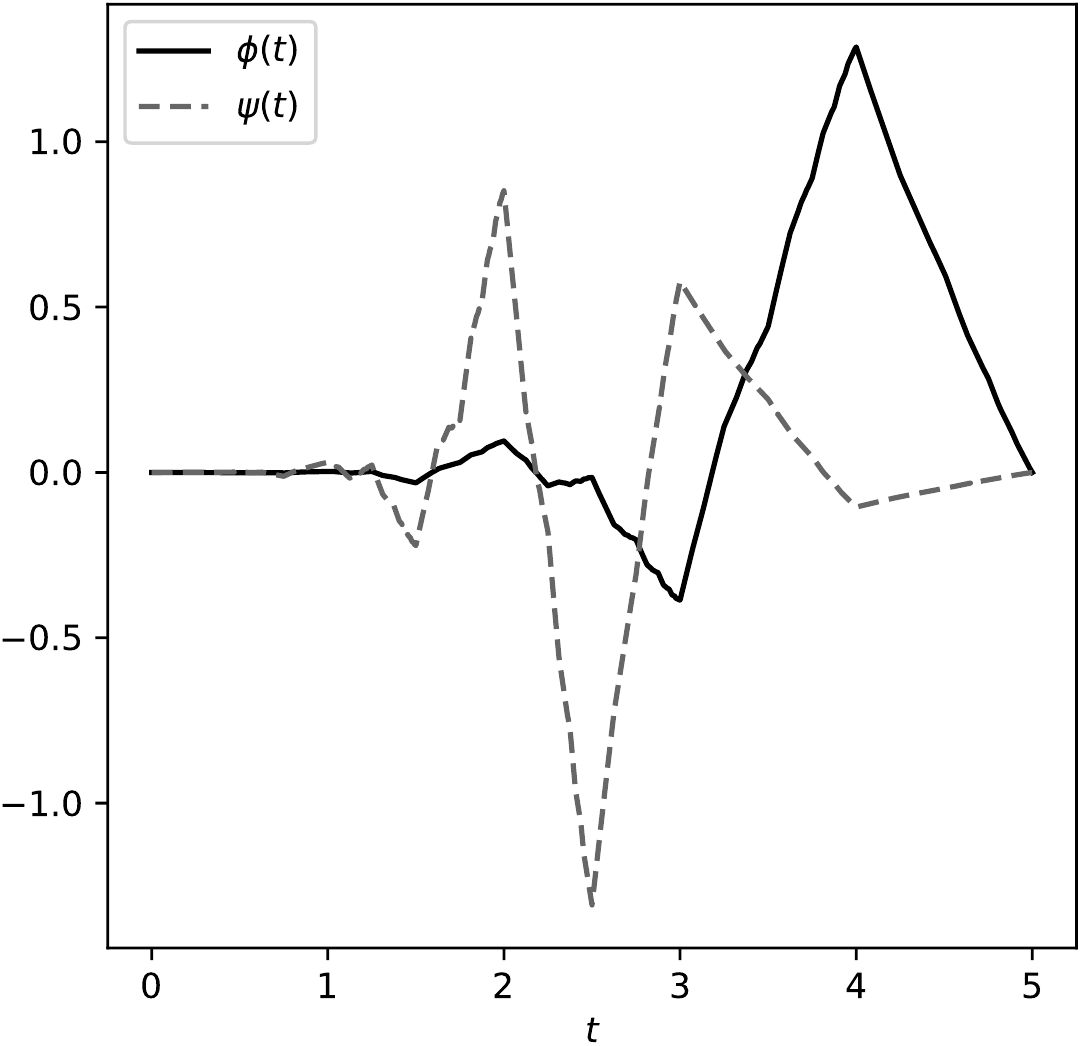}
\caption{The scaling function $\varphi$ and wavelet $\psi$.}
\end{subfigure}	
\begin{subfigure}[p]{0.49\textwidth}
\centering
\vspace{12.75ex}
\begin{minipage}[c]{0.6\textwidth}

\begin{verbatim}
	    [  0.02490875,
	      -0.0604161 ,
	      -0.09546721,
	       0.3251825 ,
	       0.57055846,
	       0.2352336  ].
\end{verbatim}

\end{minipage}
\vspace{12.75ex}
\caption{The corresponding coefficients.}
\end{subfigure}	
\caption{An exemplar 1D result obtained from the DR algorithm for $(M,d)=(6,2)$.\label{f:R6final}}
\end{figure}

\begin{figure}[hptb]
\begin{subfigure}[p]{0.99\textwidth}
	\centering
	\vspace{1.25em}
	\includegraphics[width=0.95\textwidth]{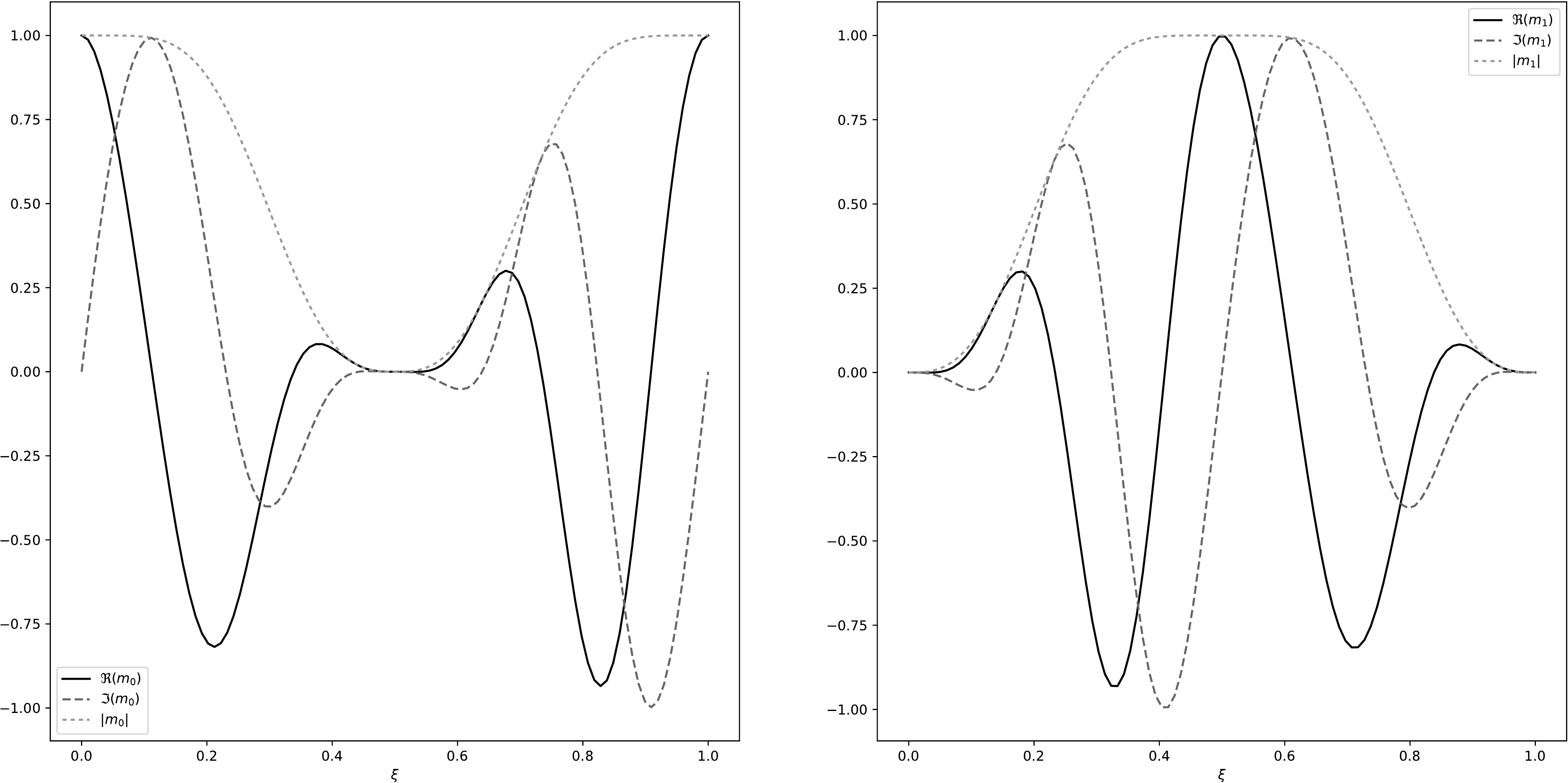}
	\caption{The polynomials $m_0(\xi)$ and $m_1(\xi)$, respectively.}
\end{subfigure}	
	
\bigskip	

\begin{subfigure}[p]{0.99\textwidth}
\centering
\includegraphics[width=0.95\textwidth]{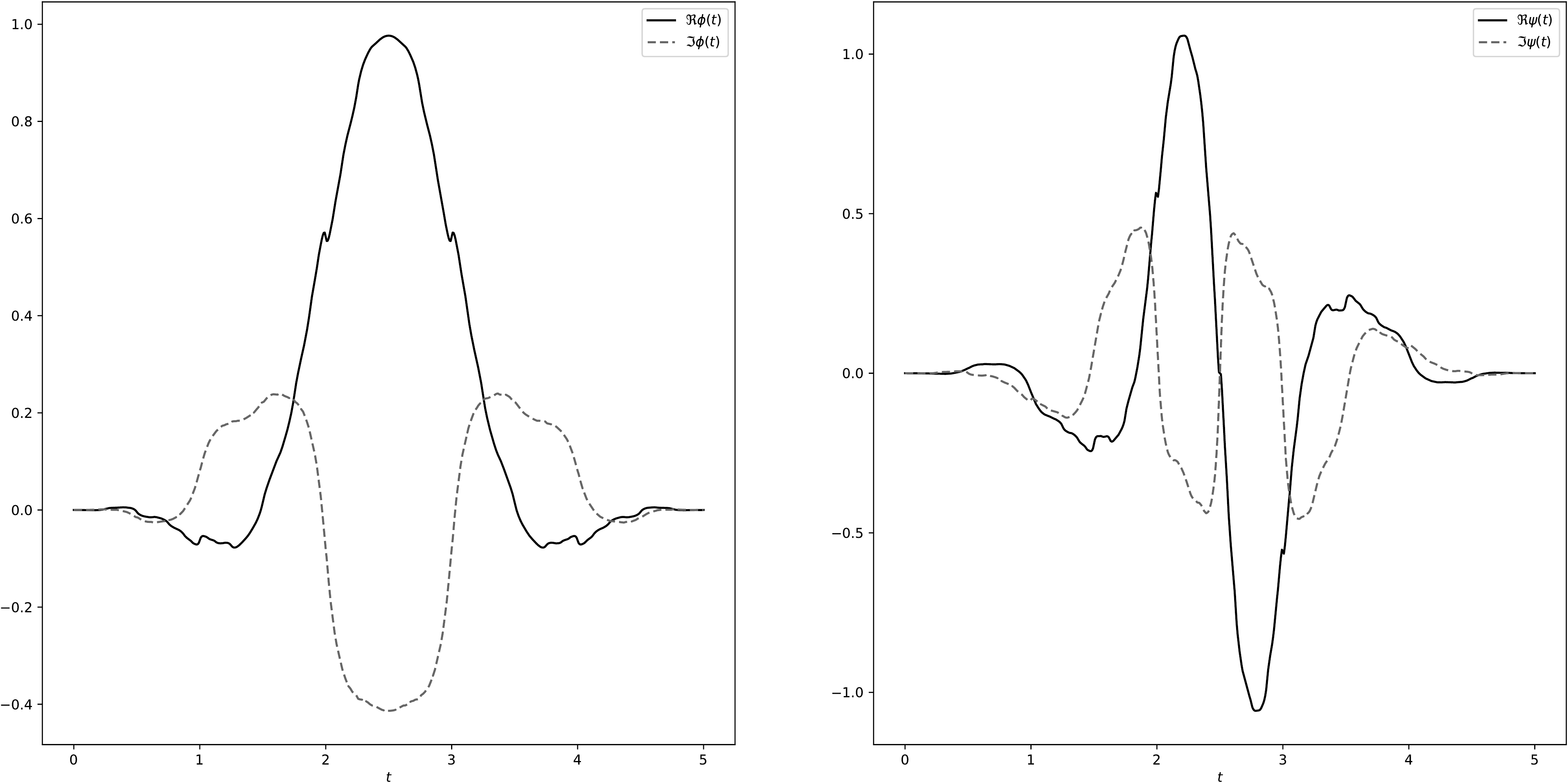}
\caption{The scaling function $\varphi$ and wavelet $\psi$.}
\end{subfigure}	
\begin{subfigure}[p]{0.99\textwidth}
\centering

\bigskip

\begin{minipage}[c]{0.7\textwidth}
{\small \begin{verbatim}
[-0.046875+0.060515i  0.078125+0.060515i  0.46875 -0.1210307i
  0.468750-0.121030i  0.078125+0.060515i -0.046875+0.0605153i]  
\end{verbatim}}

\end{minipage}
\caption{The corresponding coefficients.}
\end{subfigure}	
\caption{An exemplar 1D result obtained from the DR algorithm for $(M,d)=(6,2)$.\label{f:M6d2-s1953}}
\end{figure}

\begin{table}[htb]
	\centering
	\caption{Mean (worst case) results from $10$ replications for the 2D problem with $\epsilon=10^{-3}$.\label{t:2d results}}
	\begin{tabular}{|c|c|rr|rr|rr|} \hline
	  $(M,d)$  & Solved   & \multicolumn{2}{c|}{Iterations}    & \multicolumn{2}{c|}{Time (s)} & \multicolumn{2}{c|}{$S(\varphi)$}\\ \hline
	  $(4,1)$ & 10 &   4 469.2 &  (28 387) &    118.9 &    (708.2) & 0.209 & (0.250) \\
	  $(6,2)$ &  6 & 180 864.3 & (747 870) & 22 322.2 & (92 288.1) & 0.104 & (0.207) \\ \hline
\end{tabular}	
\end{table}

\begin{sidewaysfigure}[hptb]
\begin{subfigure}[p]{0.49\textwidth}
	\centering
	\vspace{1.25em}
	\includegraphics[height=0.35\textheight]{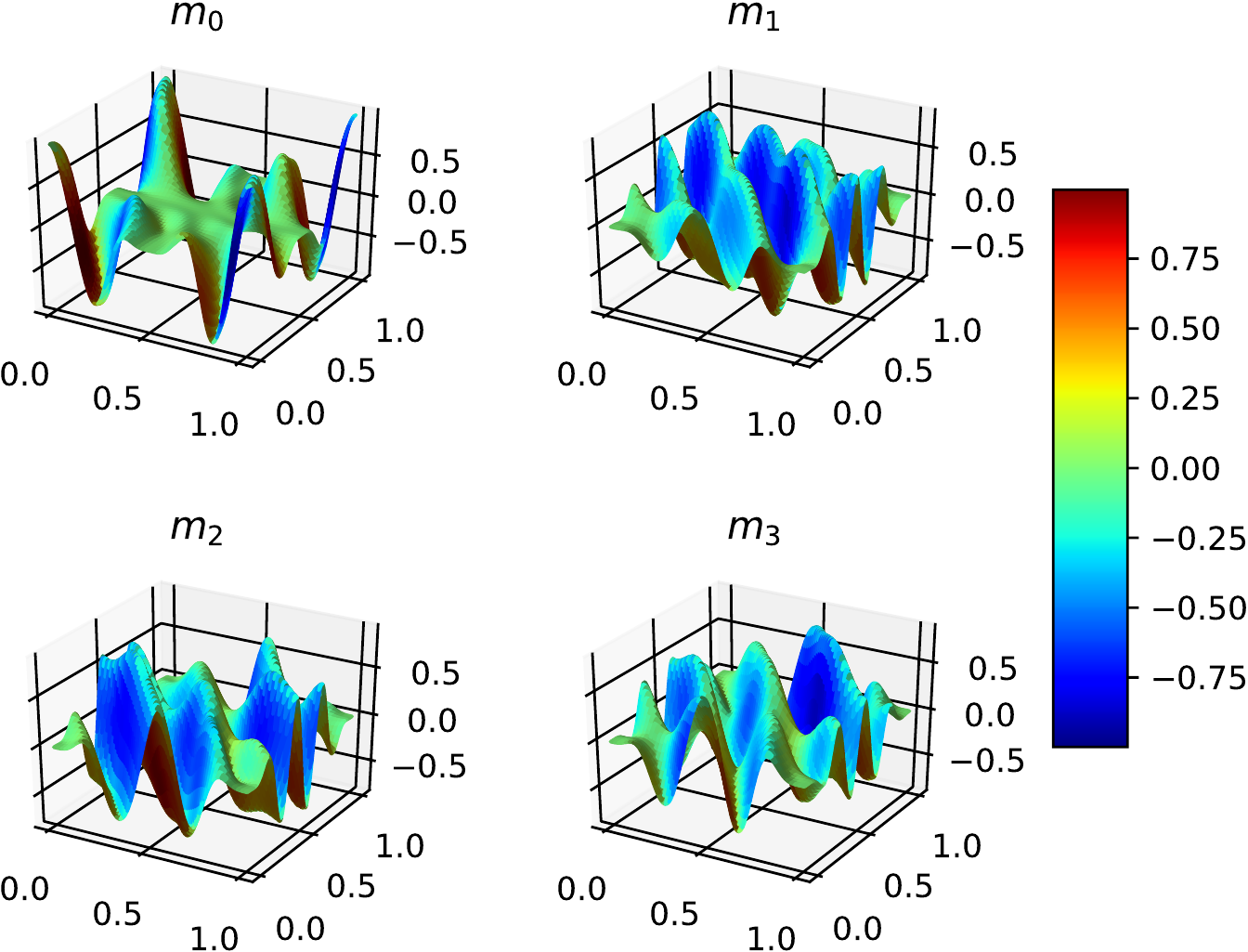}
	\caption{The polynomials $m_0(\xi)$, $m_1(\xi)$,  $m_2(\xi)$ and  $m_3(\xi)$ respectively. The real components of the polynomials are shown on the vertical axes. The imaginary components are represented by colour. }
\end{subfigure}	
\begin{subfigure}[p]{0.49\textwidth}
\centering
\includegraphics[height=0.35\textheight]{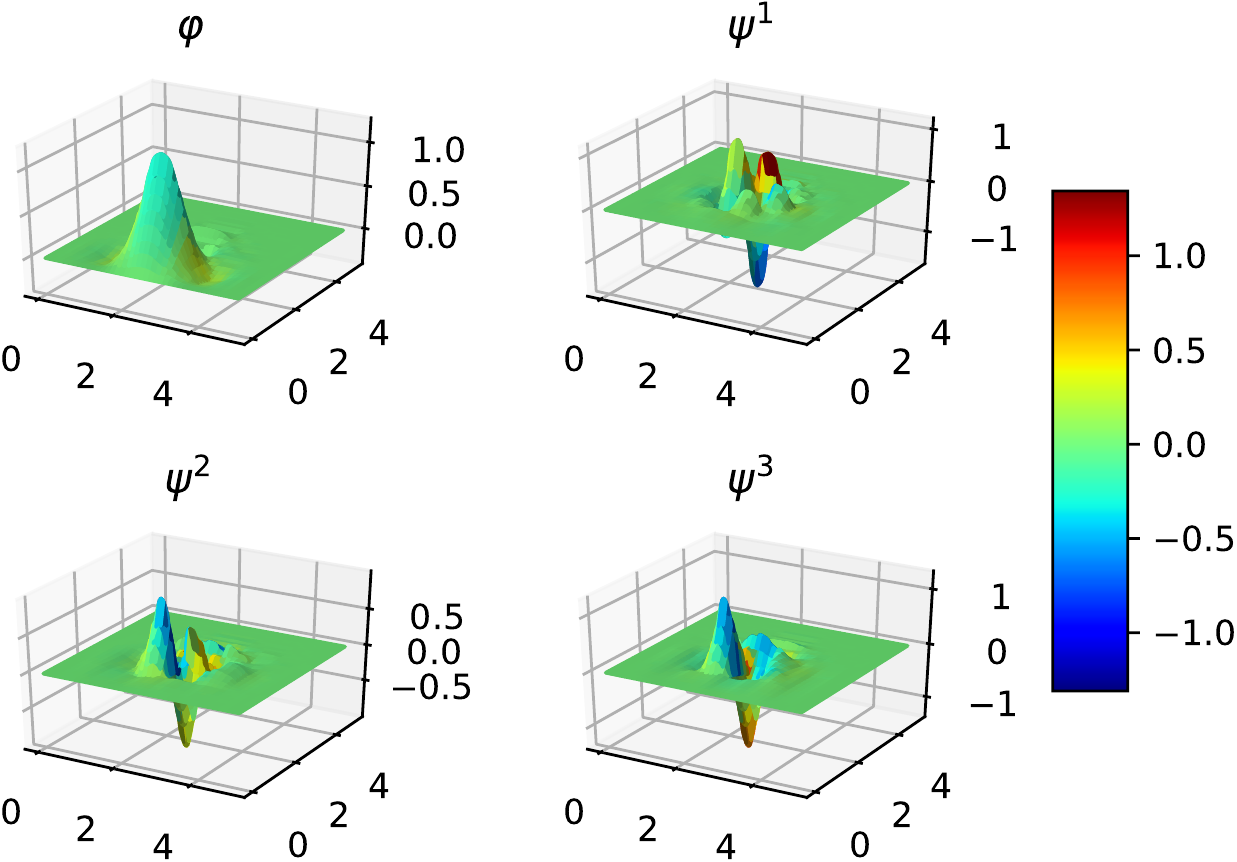}
\caption{The scaling function $\varphi$ and the wavelets $\psi^\varepsilon$. The real components of the functions are shown on the vertical axes. The imaginary components are represented by colour.}
\end{subfigure}	
\begin{subfigure}[p]{0.99\textwidth}
\centering
\vspace{5.75ex}
\begin{minipage}[c]{1\textwidth}

{\scriptsize
\begin{verbatim}
[[-1.315e-02+1.780e-02i -2.573e-02+3.276e-02i -1.046e-02+1.251e-02i  1.927e-03-2.124e-03i  1.794e-04-5.040e-05i  3.696e-04-3.775e-04i]
 [ 2.368e-02+1.746e-02i  4.149e-02+3.268e-02i  1.548e-02+1.301e-02i -1.962e-03-1.942e-03i -9.656e-05-2.155e-04i -4.618e-04-4.822e-04i]
 [ 1.073e-01-3.551e-02i  2.693e-01-6.537e-02i  1.573e-01-2.520e-02i -4.749e-02+3.955e-03i -3.024e-02+1.919e-04i  1.252e-02+9.006e-04i]
 [ 1.070e-01-3.500e-02i  2.696e-01-6.551e-02i  1.575e-01-2.585e-02i -4.774e-02+4.176e-03i -3.019e-02+3.426e-04i  1.249e-02+8.180e-04i]
 [ 2.347e-02+1.771e-02i  4.167e-02+3.261e-02i  1.574e-02+1.269e-02i -2.168e-03-1.831e-03i -1.512e-04-1.415e-04i -4.363e-04-5.232e-04i]
 [-1.310e-02+1.755e-02i -2.583e-02+3.283e-02i -1.042e-02+1.284e-02i  1.965e-03-2.233e-03i  7.934e-05-1.271e-04i  4.278e-04-3.358e-04i]]
\end{verbatim}
}
\end{minipage}
\vspace{5.75ex}
\caption{The corresponding scaling function coefficients.}
\end{subfigure}	
\caption{An exemplar 2D result obtained from the DR algorithm for $(M,d)=(6,2)$.\label{f:2dfinal}}
\end{sidewaysfigure}

\begin{sidewaysfigure}[hptb]
\begin{subfigure}[p]{0.49\textwidth}
	\centering
	\vspace{1.25em}
	\includegraphics[height=0.35\textheight]{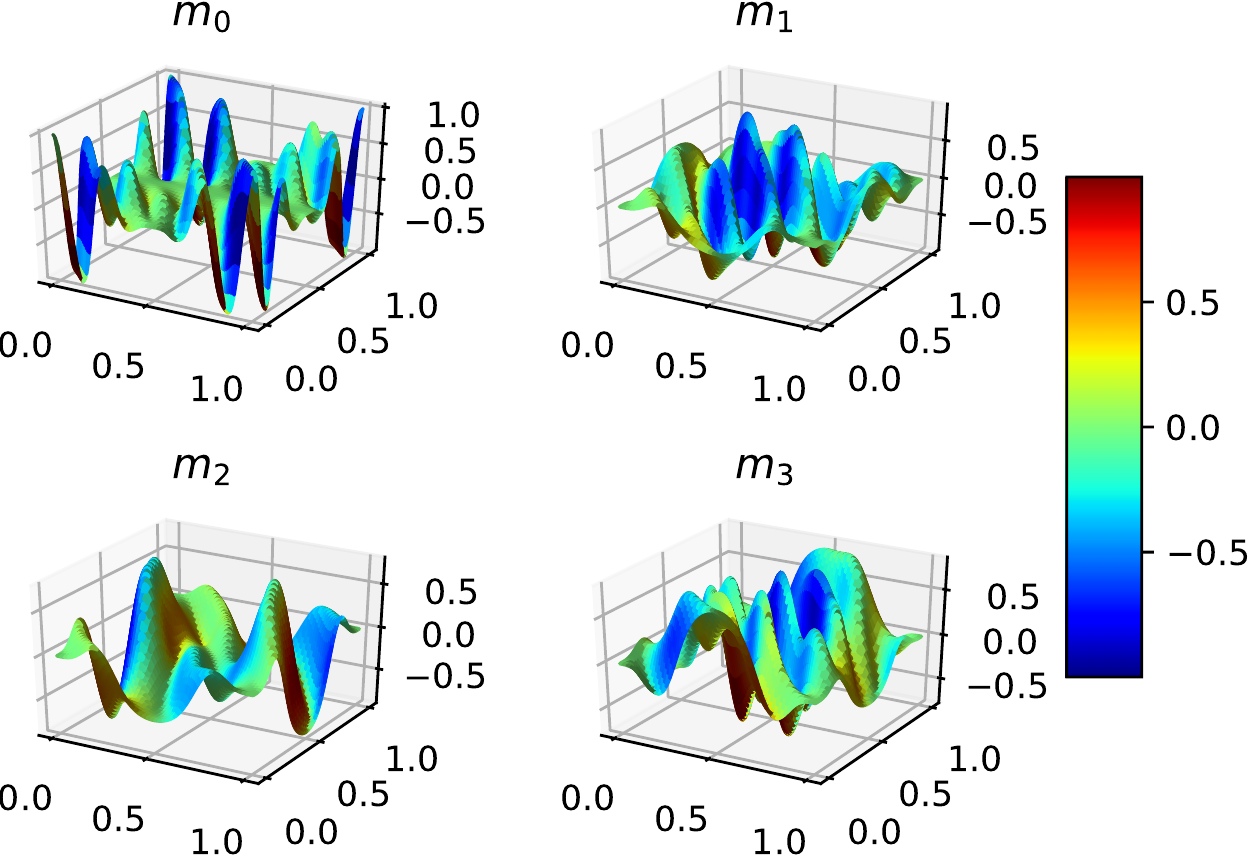}
	\caption{The polynomials $m_0(\xi)$, $m_1(\xi)$,  $m_2(\xi)$ and  $m_3(\xi)$ respectively. The real components of the polynomials are shown on the vertical axes. The imaginary components are represented by colour. }
\end{subfigure}	
\begin{subfigure}[p]{0.49\textwidth}
\centering
~\hspace{1.25cm}\includegraphics[height=0.35\textheight]{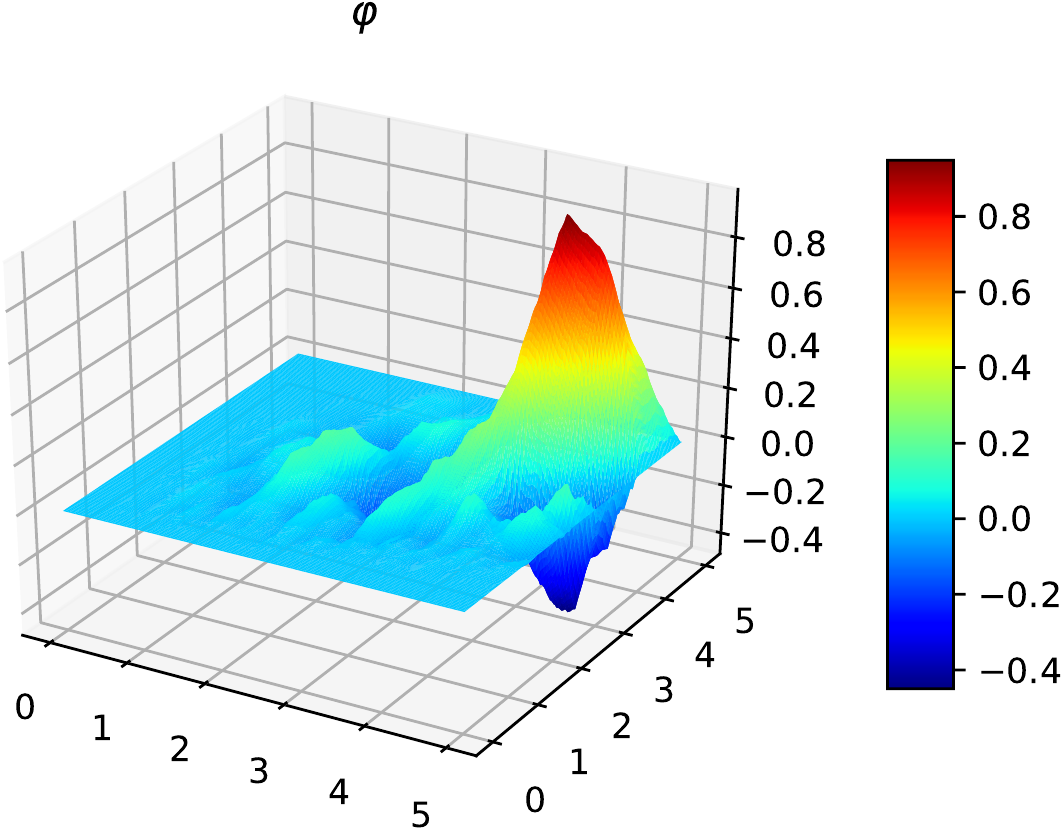}
\caption{The (real-valued) scaling function $\varphi$.}
\end{subfigure}	
\begin{subfigure}[p]{0.99\textwidth}
\centering
\vspace{5.75ex}
\begin{minipage}[c]{0.4\textwidth}
{\scriptsize
\begin{verbatim}
[[ 0.0368,  0.0406, -0.0305, -0.0362,  0.0061,  0.008 ],
 [-0.0341, -0.0591, -0.0079,  0.0292,  0.0118, -0.0003],
 [-0.0308, -0.0405, -0.0225, -0.0185,  0.0056,  0.0113],
 [ 0.0304,  0.0661,  0.082 ,  0.0877,  0.0502,  0.0088],
 [ 0.0065, -0.0303,  0.0052,  0.2172,  0.2736,  0.0984],
 [ 0.0161, -0.0371, -0.1218,  0.0457,  0.2233,  0.1091]]
\end{verbatim}
}
\end{minipage}
\vspace{5.75ex}
\caption{The corresponding scaling function coefficients.}
\end{subfigure}	
\caption{An exemplar 2D result obtained from the DR algorithm for $(M,d)=(6,2)$.\label{f:2dneil}}
\end{sidewaysfigure}

\section*{Acknowledgement} The authors are grateful for the input of Neil Dizon who helped in the generation of the figures and provided the highly non-separable example of Section \ref{sec: comp}.

\noindent JAH was supported by the Australian Research Council through DP160101537. MKT was supported by the Australian Research Council through DE200100063. Thanks Roy. Thanks HG.

\end{document}